\date{\today}
\numberwithin{equation}{section}
\theoremstyle{plain}
\newtheorem{theorem}{Theorem}[section]
\newtheorem{lemma}[theorem]{Lemma}
\newtheorem{corollary}[theorem]{Corollary}
\theoremstyle{definition}
\newtheorem{assumption}[theorem]{Assumption}
\newtheorem{definition}[theorem]{Definition}
\theoremstyle{remark}
\newtheorem{remark}[theorem]{Remark}
\newtheorem{nota}{Notation}
\newcommand{\bR}{\mathbb{R}}
\newcommand{\bS}{\mathbb{S}}
\newcommand{\cC}{\mathcal{C}}
\newcommand{\cD}{\mathcal{D}}
\newcommand{\cL}{\mathcal{L}}
\newcommand{\cM}{\mathcal{M}}
\newcommand{\p}{\partial}
\newcommand{\tOmega}{\widetilde{\Omega}}
\newcommand{\set}[1]{\left\lbrace #1 \right\rbrace}
\newcommand{\rY}{\mathring{Y}}
\DeclareMathOperator*{\divg}{div}
\DeclareMathOperator*{\Lip}{Lip}
\DeclareMathOperator*{\VMO}{VMO} 
\DeclareMathOperator*{\BMO}{BMO} 
\begin{document}
\title[Fundamental Solution]{Fundamental solutions for stationary Stokes systems with measurable coefficients}

\author[J. Choi]{Jongkeun Choi}
\address[J. Choi]{Department of Mathematics, Korea University, 145 Anam-ro, Seongbuk-gu, Seoul, 02841, Republic of Korea}
\email{jongkeun\_choi@korea.ac.kr}
%\thanks{}

\author[M. Yang]{Minsuk Yang}
\address[M. Yang]{School of Mathematics, Korea Institute for Advanced Study, 85 Hoegi-ro Dongdaemun-gu, Seoul 130-722, Republic of Korea}
\email{yangm@kias.re.kr}
%\thanks{}

\subjclass[2010]{35J58, 35K41, 35R05}
\keywords{Fundamental Solution; Green function; Stokes system;  BMO coefficients}
%\date{Received: date / Revised version: date}

\begin{abstract}
We establish the existence and the pointwise bound of the fundamental solution for the stationary Stokes system with measurable coefficients in the whole space $\mathbb{R}^d$, $d \ge 3$,
under the assumption that weak solutions of the system are locally H\"older continuous. 
We also discuss the existence and the pointwise bound of the Green function for the Stokes system with measurable coefficients on  $\Omega$, where $\Omega$ is an unbounded domain such that the divergence equation is solvable.
Such a domain includes, for example, half space and an exterior domain.
\end{abstract}

\maketitle
%\tableofcontents

%=========================================================
\section{Introduction}
\label{S1}
%=========================================================
In this paper, we study the stationary Stokes system 
\begin{equation} 		\label{170322@eq1}
\left\{
\begin{aligned}
&\cL u+\nabla p=f \\
&\divg u=g
\end{aligned}
\right.
\end{equation}
in $\bR^d$, $d\ge 3$ and half space where $\cL$ is an elliptic operator 
\[\cL u=-D_\alpha (A^{\alpha\beta}D_\beta u)\]
acting on vector fields $u=(u^1,\ldots,u^d)^{\operatorname{tr}}$.
Throughout the paper we use Einstein's summation convention over repeated indices.
The coefficients $A^{\alpha\beta}=A^{\alpha\beta}(x)$ are $d\times d$ matrix valued functions whose entries $A^{\alpha\beta}_{ij}(x)$ are bounded and satisfy the strong ellipticity condition, 
i.e., there exists a constant $\lambda\in (0,1)$ such that for any $x\in \bR^d$ and 
$\xi=(\xi^i_\alpha),\, \eta=(\eta^i_\alpha)\in \bR^{d\times d}$, we have 
\begin{equation}		\label{161123@EQ1}
\sum_{\alpha,\beta,i,j=1}^dA^{\alpha\beta}_{ij}(x)\xi^j_\beta \xi^i_\alpha\ge \lambda|\xi|^2, \quad \sum_{\alpha,\beta,i,j=1}^d\big|A^{\alpha\beta}_{ij}(x)\xi^j_\beta \eta^i_\alpha\big|\le \lambda^{-1}|\xi||\eta|.
\end{equation}

Let $\cM : \Omega \to \tOmega$ be a smooth diffeomorphism whose Jacobian equals one to preserve the incompressibility of the flow.
If we set $v(y) = v(\cM(x)) = u(x)$ and $q(y) = q(\cM(x)) = p(x)$ for all $y = \cM(x) \in \tOmega$, then we have for $i=1,2,\dots,d$
\[\frac{\p v}{\p y^i} = \frac{\p u}{\p x^j} \frac{\p (\cM^{-1})^j}{\p y^i}, \qquad \frac{\p q}{\p y^i} = \frac{\p p}{\p x^j} \frac{\p (\cM^{-1})^j}{\p y^i}.\]
We may regard the directional deriavatives as a gradient operator $\nabla_y = \p \cM^{-1} \nabla_x$.
Using this operator we can write $\divg_y v = (\p \cM^{-1} \nabla_x) \cdot u$ and so $\divg v=g$ is equivalent to 
\[\divg u = (\p \cM) g.\]
Similarly, we can rewrite $-\Delta_y v  + \nabla_y q = f$ as 
\[ \nabla_x \cdot (\p \cM^{-1} \nabla_x u) + \nabla_x p = (\p \cM) f.\]
This situation often occurs when one consider the limiting case of the Stokes system in time varying domains. 
These variable coefficient systems are used also for describing inhomogeneous fluids with density dependent viscosity (see, for instance, \cite{MR2663713,MR0425391}).
Giaquinta--Modica \cite{MR641818} gave various regularity results for nonlinear systems of the type of the stationary Navier--Stokes system.
$L_p$-estimates of these operators were established recently in \cite{arXiv:1503.07290v3, arXiv:1604.02690v2,arXiv:1702.07045v1}.
This motivates our study of the Stoke system with variable coefficients.

For the classical Stokes system 
\[-\Delta u+\nabla p=f, \qquad \divg u=g\]
there are a huge number of literatures regarding the Green function, which plays a significant role in the study of mathematical fluid dynamics.
One of the most popular references is a monograph \cite{MR2808162} written by Galdi.
We refer the reader for additional discussions of the fundamental solution to \cite{MR1683625,MR1282728} and references therein.
For the study of the Green function subject to Dirichlet boundary conditions on bounded domains in $\bR^2$ or $\bR^3$, we refer to \cite{MR725151,MR734895,MR2465713,MR2718661,MR2763343} and references therein. 
For mixed boundary value problems in $\bR^3$, Maz'ya--Rossmann \cite{MR2182091} obtained the pointwise estimate of Green functions.
For the two dimensional case, Ott--Kim--Brown \cite{MR3320459} obtained corresponding results.

Our aim is to construct the fundamental solution $(V(x,y), \Pi(x,y))$ and to establish the pointwise bound of $V(x,y)$
\begin{equation}		\label{161130@eq1}
|V(x,y)|\le C_0|x-y|^{2-d}, \quad \forall x,\,y\in \bR^d, \quad 0<|x-y|\le R_0
\end{equation}
under the assumption that weak solutions $(u,p)$ of either 
\begin{equation}		\label{170325@eq1}
\cL u+\nabla p =0, \qquad \divg u=0
\end{equation}
or 
\begin{equation}		\label{170325@eq2}
\cL^* u+\nabla p =0, \qquad \divg u=0
\end{equation}
are locally H\"older continuous, where $\cL^*$ denotes the adjoint operator 
\[\cL^* u=-D_\alpha(A^{\beta\alpha}(x)^{\operatorname{tr}}D_\beta u).\]
We shall show that the local H\"older continuity assumption is satisfied even in the following general cases.
\begin{enumerate}[i)]
\item
The coefficients $A^{\alpha\beta}$ are merely measurable functions of only one fixed direction.
\item
The coefficients $A^{\alpha\beta}$ are partially $\BMO$ (measurable in one direction and having small $\BMO$ semi norms in the other variables). 
\end{enumerate}
The first case is actually a special case of the second one.
However, the pointwise estimate \eqref{161130@eq1} holds for all $R_0\in (0, \infty)$ for the case i), whereas \eqref{161130@eq1} holds for some $R_0$ for the case ii); see Section \ref{S2} for more explicit statements. 
We are also interested in the existence and the global pointwise bound of the Green function for the Stokes system \eqref{170322@eq1} in an unbounded domain $\Omega\subset \bR^d$, $d\ge 3$.
We prove that if the problem 
\[
\left\{
\begin{aligned}
\divg u=g \quad \text{in }\, \Omega,\\
u=0 \quad \text{on }\, \partial \Omega,\\
\|Du\|_{L_q(\Omega)}\lesssim \|g\|_{L_q(\Omega)}
\end{aligned}
\right.
\]
is solvable and if  weak solutions of the system \eqref{170325@eq1} or \eqref{170325@eq2} are locally H\"older continuous, then the Green function exists and satisfies a natural growth estimate near the pole; see Theorems \ref{MRB} and \ref{MRH}.
Morever, we obtain the global pointwise bound for  the Green function under an additional assumption that weak solutions of Dirichlet problem are locally bounded up to the boundary; see Theorems \ref{MRE} and \ref{MRI}.

Unlike the classical Stokes system with the Laplace operator,  we are not able to find any literature explicitly dealing with the existence and the pointwise estimate of the fundamental solution for the Stokes system with nonsmooth coefficients.
In a recent article \cite{arXiv:1503.07290v3}, the existence of the Green function for the general Stokes system with $\VMO$ (vanishing mean oscillation) coefficients in a bounded Lipschitz domain has been studied.
We note  that in this paper, interior and boundary estimates for the pressure $\Pi(x,y)$ of the Green function are established with precise information on the dependence of the estimates, whereas in \cite{arXiv:1503.07290v3}  $L^q$-integrability on a domain for the pressure of the Green function is considered.

Green functions for the linear systems have been studied by many authors.
In particular, Hofmann--Kim \cite{MR2341783} proved the existence and various estimates of the Green function for the elliptic system with irregular coefficients on any open domain.
Kang--Kim \cite{MR2718661} established the global pointwise estimate of the Green function for the system.
We also refer the reader to \cite{MR3261109, MR3105752} for the study of Green functions for elliptic systems with irregular coefficients subject to Neumann or Robin boundary condition.
In this paper, we mainly follow the arguments by Hofmann--Kim \cite{MR2341783} and Kang--Kim \cite{MR2718661}, but the technical details are different from those papers because the presence of the pressure term $p$ makes the argument more involved.
In order to estimate $V(x,y)$ and $\Pi(x,y)$, we utilize the solvability of the divergence equation in the domain.

The organization of this paper is as follows.
In Section \ref{S2}, we set up our notations and state our main results.
In Section \ref{S4}, we gather some auxiliary lemmas.
From Section \ref{S5} to Section  \ref{S10}, we give each proof of our main theorems, Theorem \ref{MRA}, Theorem \ref{MRB}, Theorem \ref{MRD}, Theorem \ref{MRE}, Theorem \ref{MRF}
, and Theorem \ref{MRG}.
Section \ref{S11} is devoted to the study of the Green function on an unbounded domain such as an exterior domain.

Throughout the paper we shall use the following notation.

\begin{nota}
We denote $A \lesssim B$ if there exists a generic positive constant $C$ such that $|A| \le C|B|$.
We add subscript letters like $A \lesssim_{a,b} B$ to indicate the dependence of the implied constant $C$ on the parameters $a$ and $b$.
\end{nota}

%================================================
\section{Main results}
\label{S2}
%=========================================================
Before stating our main results, we set up some notations and definitions.
We use $x=(x_1,x')=(x_1,\ldots,x_d)$ to denote a point in $\bR^d$.
We fix half space to be 
\[\bR^d_+=\{x=(x_1,x')\in \bR^d:x_1>0,\, x'\in \bR^{d-1}\}.\]
We denote by $B_r(x)$ usual Euclidean balls of radius $r>0$ centered at $x\in \bR^d$ and 
by $B_r^+(x)$ half balls 
\[B_r^+(x)=\{y\in B_r(x):y_1>x_1\}.\]
Balls in $\bR^{d-1}$ are denoted by $B_r'(x')=\{y'\in \bR^{d-1}:|x'-y'|<r\}$.
We use the following abbreviations $B_r=B_r(0)$ and $B_r^+=B_r^+(0)$, where $0\in \bR^d$, and $B_r'=B_r'(0)$, where $0\in \bR^{d-1}$.
We use the standard notation for spheres $\bS^{d-1} = \set{e \in \bR^d : |e|=1}$.
We define $d_x=\operatorname{dist}(x,\partial \Omega)$ for $x\in \Omega$ and $d_x=\infty$ if $\Omega=\bR^d$.

\begin{definition}[$Y^1_q(\Omega)$ spaces] 
\label{D21}
Let $d\ge 3$ and $\Omega$ be an open set in $\bR^d$.
The space $Y^1_q(\Omega)$ is defined for $q\in [1,d)$ to be the family of all weakly differential functions $u\in L_{dq/(d-q)}(\Omega)$ whose weak derivatives are functions in $L_q(\Omega)$.
The space $Y^1_q(\Omega)$ is endowed with the norm 
\[\|u\|_{Y^1_q(\Omega)}=\|u\|_{L_{dq/(d-q)}(\Omega)}+\|Du\|_{L_q(\Omega)}.\]
We let $\mathring{W}^1_q(\Omega)$ and $\rY^1_q(\Omega)$ be the closure of $C_c^\infty(\Omega)$ in $W^1_q(\Omega)$ and $Y^1_q(\Omega)$, respectively. 
Here $W^1_q(\Omega)$ denotes the usual Sobolev space.
\end{definition}

\begin{remark}
We note that $Y^1_q(\bR^d)=\rY^1_q(\bR^d)$ (see  \cite[p. 46]{MR1461542}).
The Sobolev inequality implies that for all $u\in \rY^1_q(\Omega)$
\[\|u\|_{L_{dq/(d-q)}(\Omega)} \lesssim_{d,q} \|Du\|_{L_q(\Omega)}.\]
Therefore, $\rY^1_2(\Omega)$ can be understood as a Hilbert space with the inner product
\[\langle u,v\rangle=\int_\Omega D_\alpha u\cdot D_\alpha v\,dx.\]
\end{remark}

\begin{nota}
We denote an average of a function $u$ on $\Omega$ by 
\[(u)_\Omega=\fint_\Omega u\,dx=\frac{1}{|\Omega|}\int_\Omega u\,dx.\]
\end{nota}

\begin{definition}[Weak solutions]
\label{D22}
Let
\[
f\in L_{2d/(d+2)}(\Omega)^d, \quad f_\alpha\in L_2(\Omega)^d, \quad g\in L_2(\Omega).
\]
We say that $(u, p)\in \rY^1_2(\Omega)^d \times L_2(\Omega)$ is a weak solution to 
\[\left\{
\begin{aligned}
\cL u+\nabla p =f+D_\alpha f_\alpha &\quad \text{in }\, \Omega,\\
\divg u=g &\quad \text{in }\, \Omega,
\end{aligned}
\right.\]
in an unbounded domain $\Omega=\bR^d$ or $\Omega=\bR_+^d$ if $(u, p)$ satisfies the system in the sense of distributions in $\Omega$.
In particular, for any $\phi\in \rY^1_2(\Omega)^d$ 
\[\int_\Omega A^{\alpha\beta}D_\beta u\cdot D_\alpha \phi\,dx - \int_\Omega p \divg \phi\,dx=\int_\Omega f\cdot \phi\,dx-\int_\Omega f_\alpha\cdot D_\alpha \phi\,dx.\]
Similarly, we say that $(u, p)\in \rY^1_2(\Omega)^d\times L_2(\Omega)$ is a weak solution to 
\[\left\{
\begin{aligned}
\cL^* u+\nabla p =f+D_\alpha f_\alpha &\quad \text{in }\, \Omega,\\
\divg u=g &\quad \text{in }\, \Omega,
\end{aligned}
\right.\]
in an unbounded domain $\Omega=\bR^d$ or $\Omega=\bR_+^d$ if $(u, p)$ satisfies the system in the sense of distributions in $\Omega$.
In particular, for any $\phi\in \rY^1_2(\Omega)^d$ 
\[\int_\Omega A^{\alpha\beta}D_\beta \phi \cdot D_\alpha u\,dx - \int_\Omega p \divg \phi\,dx=\int_\Omega f\cdot \phi\,dx-\int_\Omega f_\alpha\cdot D_\alpha \phi\,dx.\]
\end{definition}

\begin{definition}[Green functions on unbounded domains $\Omega$]
\label{D23}
Let $V(x,y)$ be a $d \times d$ matrix valued function and $\Pi(x,y)$ be a $d \times 1$ vector valued function on $\Omega \times \Omega$.
We say that a pair $(V(x,y),\Pi(x,y))$ is the Green function for the Stokes system if it satisfies the following properties.
\begin{enumerate}[$(a)$]
\item
For any $y\in \Omega$, 
$V(\cdot,y)\in W^1_{1,\operatorname{loc}}(\overline{\Omega})^{d\times d}$ and $\Pi(\cdot,y)\in L_{1,\operatorname{loc}}(\overline{\Omega})^d$.
Moreover, $(1-\eta)V(\cdot,y)\in \rY^1_2(\Omega)^{d\times d}$ for all $\eta\in C^\infty_0(\Omega)$ satisfying $\eta\equiv 1$ on $B_r(y)$, where $0<r<d_y$.
\item
For any $y\in \Omega$, $(V(\cdot,y),\Pi(\cdot,y))$ satisfies 
\begin{equation}	\label{160822@eq1}
\divg V(\cdot,y)=0 \quad \text{in }\, \Omega
\end{equation}
and
\[\cL V(\cdot,y) + \nabla \Pi(\cdot,y)=\delta_y\vec I \quad \text{in }\, \Omega\]
in the sense that for any $\phi\in C^\infty_0(\Omega)^d$, we have 
\begin{equation}		\label{160821@eq5}
\int_{\Omega} A^{\alpha\beta}D_\beta  V^{\cdot k}(\cdot,y)\cdot D_\alpha \phi\,dx - \int_{\Omega}\Pi^k(\cdot,y)\divg \phi\,dx=\phi^k(y),
\end{equation}
where $V^{\cdot k}(x,y)$ is the $k$-th $(k\in \{1,\ldots,d\})$ column of $V(x,y)$.
\item
Suppose that  $f\in C^\infty_c(\Omega)^n$ and $g\in C^\infty_c(\Omega)$.
If 
$(u,p)\in \rY^1_2(\Omega)^d\times L_2(\Omega)$ is a weak solution to 
\[
\left\{
\begin{aligned}
\cL^* u+\nabla p =f &\quad \text{in }\, \Omega,\\
\divg u=g &\quad \text{in }\, \Omega,
\end{aligned}
\right.
\]
then 
\[
u(y)=\int_{\Omega}V(\cdot,y)^{\operatorname{tr}} f\,dx
 - \int_{\Omega}\Pi(\cdot,y)g\,dx.
\]
\end{enumerate}
The Green function for the adjoint Stokes system is defined similarly,
and the Green function in $\Omega = \bR^d$ is called the fundamental solution.
We point out that the condition (c) in the above definition gives the uniqueness of a Green function.
\end{definition}

Before stating our main theorems, we introduce the following assumption.
It is known that if the coefficients are $\VMO$ (vanishing mean oscillations), then Assumption \ref{ASSA} holds; see \cite{arXiv:1503.07290v3}.
For more examples of the coefficients satisfying Assumption \ref{ASSA}, see Theorem \ref{MRD}.

\begin{assumption}		
\label{ASSA}
There exist positive real numbers $R_0$, $C_0$, and $\alpha_0 < 1$ such that if $(u,p)\in W^1_2(B_R(x^0))^d\times L_2(B_R(x^0))$ satisfies, in the sense of distributions,
\begin{equation}		\label{161121@EQ1}
\cL u+\nabla p =0,\quad  \divg u=0 \quad \text{in }\, B_R(x^0),
\end{equation}
for some $x^0\in \Omega$ and $0<R\le \min\{R_0,\operatorname{dist}(x^0,\partial \Omega)\}$, then 
\begin{equation*}	
[u]_{C^{\alpha_0}(B_{R/2}(x^0))}\le C_0R^{-\alpha_0}\left(\fint_{B_R(x^0)}|u|^2\,dx\right)^{1/2},
\end{equation*}
where $[u]_{C^{\alpha_0}}$ denotes the usual H\"older seminorm.
The same estimate holds true when $\cL$ is replaced by $\cL^*$.
\end{assumption}

\begin{theorem}
\label{MRA}
Let $\Omega=\bR^d$, $d\ge 3$.
If Assumption \ref{ASSA} holds true, then there exists a unique fundamental solution $(V(x,y), \Pi(x,y))$ for the Stokes problem in  $\Omega$.
Moreover, for any $x,\, y\in \Omega$ satisfying $0<|x-y|\le R_0$, 
\begin{equation}		\label{160822_eq1}
|V(x,y)|\lesssim_{d,\lambda,C_0,\alpha_0} |x-y|^{2-d}.
\end{equation}
Furthermore, if for some $q_0>d$ 
\begin{equation}
\label{170307@eq2}
\begin{split}
f&\in L_{2d/(d+2)}(\Omega)^d\cap L_{q_0/2, \operatorname{loc}}(\Omega)^d, \\
f_\alpha&\in L_2(\Omega)^d\cap L_{q_0,\operatorname{loc}}(\Omega)^d, \\
g&\in L_2(\Omega)\cap L_{q_0,\operatorname{loc}}(\Omega),
\end{split}
\end{equation}
and $(u,p)\in \rY^1_2(\Omega)^d\times L_2(\Omega)$ is a weak solution to 
\begin{equation}		\label{170307@eq3}
\left\{
\begin{aligned}
\cL^* u+\nabla p =f+D_\alpha f_\alpha &\quad \text{in }\, \Omega,\\
\divg u=g &\quad \text{in }\, \Omega,
\end{aligned}
\right.
\end{equation}
then 
\begin{equation}		\label{170307@eq1}
u(y)=\int_{\Omega}V(\cdot,y)^{\operatorname{tr}} f\,dx
-\int_{\Omega}D_\alpha V(\cdot,y)^{\operatorname{tr}}f_\alpha\,dx - \int_{\Omega}\Pi(\cdot,y)g\,dx.
\end{equation}
\end{theorem}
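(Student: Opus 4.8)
\smallskip
\noindent\textit{Proof strategy.}
The plan is to adapt the approximation scheme of Hofmann--Kim \cite{MR2341783} and Kang--Kim \cite{MR2718661} to the Stokes system, constructing $(V,\Pi)$ as a limit of \emph{averaged fundamental solutions} and handling the extra pressure term by repeatedly appealing to the solvability of the divergence equation. Fix $y\in\bR^d$, $k\in\{1,\dots,d\}$ and $\varepsilon>0$, and let $(V^{\cdot k}_\varepsilon(\cdot,y),\Pi^k_\varepsilon(\cdot,y))\in\rY^1_2(\bR^d)^d\times L_2(\bR^d)$ solve $\cL V^{\cdot k}_\varepsilon+\nabla\Pi^k_\varepsilon=\abs{B_\varepsilon(y)}^{-1}\mathbf{1}_{B_\varepsilon(y)}e_k$, $\divg V^{\cdot k}_\varepsilon=0$ in $\bR^d$. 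I would obtain existence from the Lax--Milgram lemma on the closed divergence-free subspace of $\rY^1_2(\bR^d)^d$ (coercivity from \eqref{161123@EQ1} and the Sobolev inequality), followed by de Rham's lemma together with the solvability of $\divg=\cdot$ to recover the pressure; testing against $V^{\cdot k}_\varepsilon$ and using the divergence-equation bound then gives $\norm{DV^{\cdot k}_\varepsilon(\cdot,y)}_{L_2(\bR^d)}+\norm{\Pi^k_\varepsilon(\cdot,y)}_{L_2(\bR^d)}\lesssim\varepsilon^{1-d/2}$.

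Next I would prove bounds that are uniform in small $\varepsilon$. For $x$ with $r:=\abs{x-y}\in(0,R_0]$ and $\varepsilon<r/4$, the pair $(V^{\cdot k}_\varepsilon(\cdot,y),\Pi^k_\varepsilon(\cdot,y))$ solves the homogeneous system \eqref{170325@eq1} in $B_{r/2}(x)$, so Assumption \ref{ASSA} applies; upgrading it (by averaging the H\"older estimate) to a local $L_\infty$--$L_2$ bound and combining with a Caccioppoli estimate on dyadic annuli centered at $y$ yields $\big(\fint_{B_s(x)}\abs{V^{\cdot k}_\varepsilon(\cdot,y)}^2\big)^{1/2}\lesssim s^{2-d}$ for $0<s\le r/2$, hence the pointwise bound $\abs{V_\varepsilon(x,y)}\lesssim r^{2-d}$. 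The step I expect to be the main obstacle is that all such energy/Caccioppoli estimates must be carried out against \emph{divergence-free} test functions: since $\divg(\zeta^2V^{\cdot k}_\varepsilon)=2\zeta\,\nabla\zeta\cdot V^{\cdot k}_\varepsilon\ne0$ for a cutoff $\zeta$, the field $\zeta^2V^{\cdot k}_\varepsilon$ is not admissible, and one must subtract a corrector $w\in\rW^1_2$ supported in the relevant annulus with $\divg w=2\zeta\,\nabla\zeta\cdot V^{\cdot k}_\varepsilon$ and $\norm{Dw}_{L_2}\lesssim s^{-1}\norm{V^{\cdot k}_\varepsilon}_{L_2}$, so that $\zeta^2V^{\cdot k}_\varepsilon-w$ is admissible and the pressure term drops out. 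The same solvability, used in duality, gives the local pressure oscillation bound $\norm{\Pi^k_\varepsilon(\cdot,y)-(\Pi^k_\varepsilon(\cdot,y))_{B_s(x)}}_{L_2(B_s(x))}\lesssim\norm{DV^{\cdot k}_\varepsilon(\cdot,y)}_{L_2(B_s(x))}$, and, together with the gradient Caccioppoli estimate, the average bounds $\big(\fint_{B_s(x)}\abs{DV^{\cdot k}_\varepsilon(\cdot,y)}^2\big)^{1/2}\lesssim s^{1-d}$ away from $y$, and likewise for the pressure.

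Decomposing $B_R(y)$ into dyadic annuli and using the previous bounds on annuli disjoint from $B_\varepsilon(y)$ together with the energy estimate near $y$, one obtains $\norm{V_\varepsilon(\cdot,y)}_{W^1_1(B_R(y))}+\norm{\Pi_\varepsilon(\cdot,y)}_{L_1(B_R(y))}\lesssim_R 1$ uniformly in small $\varepsilon$, and uniform bounds on $(1-\eta)V_\varepsilon(\cdot,y)$ in $\rY^1_2(\bR^d)$ for $\eta\in C^\infty_0(\bR^d)$ with $\eta\equiv1$ near $y$ (the pressures being normalized consistently, e.g.\ uniquely in $L_2(\bR^d)$, which for the average over $B_R(y)$ is itself a point requiring the weak formulation and the divergence equation). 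A weak-compactness/diagonal argument then produces $\varepsilon_j\to0$ with $V_{\varepsilon_j}(\cdot,y)\to V(\cdot,y)$ and $\Pi_{\varepsilon_j}(\cdot,y)\to\Pi(\cdot,y)$ in the corresponding local topologies and a.e.; the limit inherits \eqref{160822_eq1} and property $(a)$ of Definition \ref{D23}, while property $(b)$ follows by testing the averaged equation against $\phi\in C^\infty_0(\bR^d)^d$ and letting $\varepsilon_j\to0$, since $\fint_{B_{\varepsilon_j}(y)}\phi^k\to\phi^k(y)$. Uniqueness is then immediate from property $(c)$: for two Green functions $(V,\Pi)$ and $(\widetilde V,\widetilde\Pi)$ one has $\int_{\bR^d}(V-\widetilde V)^{\operatorname{tr}}f\,dx=\int_{\bR^d}(\Pi-\widetilde\Pi)g\,dx$ for all $f\in C^\infty_c(\bR^d)^d$ and $g\in C^\infty_c(\bR^d)$, so $g\equiv0$ forces $V=\widetilde V$ a.e.\ and then $\Pi=\widetilde\Pi$ a.e.

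For the representation formula, given data satisfying \eqref{170307@eq2} and a weak solution $(u,p)$ of \eqref{170307@eq3}, I would test the $\cL$-equation for $V^{\cdot k}_\varepsilon(\cdot,y)$ against $u$ and the $\cL^*$-equation for $u$ against $V^{\cdot k}_\varepsilon(\cdot,y)$; the two bilinear forms coincide by the definition of $\cL^*$, and since $\divg V^{\cdot k}_\varepsilon=0$ subtraction gives
\[
\fint_{B_\varepsilon(y)}u^k\,dx=\int_{\bR^d}f\cdot V^{\cdot k}_\varepsilon(\cdot,y)\,dx-\int_{\bR^d}f_\alpha\cdot D_\alpha V^{\cdot k}_\varepsilon(\cdot,y)\,dx-\int_{\bR^d}\Pi^k_\varepsilon(\cdot,y)\,g\,dx.
\]
Interior regularity for \eqref{170307@eq3} (from Assumption \ref{ASSA} and the local $L_{q_0}$-type integrability of the data, $q_0>d$) makes $u$ continuous at $y$, so the left side tends to $u^k(y)$. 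On the right side, the contributions over $\{\abs{x-y}>c\varepsilon_j\}$ converge by the a.e.\ and (for $DV_\varepsilon$, $\Pi_\varepsilon$) weak $L_2$ local convergence away from $y$, while the contributions over $B_{c\varepsilon_j}(y)$ are bounded, via H\"older's inequality, by $\norm{f}_{L_{2d/(d+2)}(B_{c\varepsilon_j}(y))}\norm{DV^{\cdot k}_{\varepsilon_j}}_{L_2(\bR^d)}$, $\norm{f_\alpha}_{L_2(B_{c\varepsilon_j}(y))}\norm{DV^{\cdot k}_{\varepsilon_j}}_{L_2(\bR^d)}$ and $\norm{g}_{L_2(B_{c\varepsilon_j}(y))}\norm{\Pi^k_{\varepsilon_j}}_{L_2(\bR^d)}$, each of which tends to $0$ as $\varepsilon_j\to0$ precisely because $q_0>d$ (so that the local higher integrability of the data beats the $\varepsilon^{1-d/2}$ growth of the energy norms). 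This yields \eqref{170307@eq1}. The delicate point throughout --- the reason the argument genuinely departs from the elliptic case of \cite{MR2341783,MR2718661} and from the $L_q$-viewpoint of \cite{arXiv:1503.07290v3} --- is that the pressure is pinned in $L_2(\bR^d)$ for each fixed $\varepsilon$ yet only locally $L_1$ in the limit, and every estimate on it must be obtained by duality through the solvability of the divergence equation, so one must track its pointwise and oscillatory behaviour rather than mere integrability.
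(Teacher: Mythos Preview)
Your overall strategy matches the paper's closely, but there is a genuine gap in the step where you claim the uniform pointwise bound $|V_\varepsilon(x,y)|\lesssim |x-y|^{2-d}$ by ``a Caccioppoli estimate on dyadic annuli centered at $y$.'' Caccioppoli on an annulus only controls $\|DV_\varepsilon\|_{L_2}$ on the annulus by $R^{-1}\|V_\varepsilon\|_{L_2}$ on a slightly larger one, and the only global input available is the energy bound $\|DV_\varepsilon\|_{L_2(\bR^d)}+\|V_\varepsilon\|_{L_{2d/(d-2)}(\bR^d)}\lesssim\varepsilon^{1-d/2}$, which depends badly on $\varepsilon$. Feeding this through H\"older and the local $L_\infty$--$L_2$ bound gives $|V_\varepsilon(x,y)|\lesssim |x-y|^{1-d/2}\varepsilon^{1-d/2}$, which blows up as $\varepsilon\to 0$; no amount of dyadic iteration closes this without an $\varepsilon$-independent seed. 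The paper (following Hofmann--Kim) obtains that seed by a \emph{duality argument}: given $f\in L_\infty$ supported in $B_R(x)$, solve $\cL^*u+\nabla p=f$, $\divg u=0$; then testing both equations yields $\int_{B_R(x)}V_\varepsilon^{\cdot k}(\cdot,y)\cdot f\,dx=\fint_{B_\varepsilon(y)}u^k\,dx$, and Assumption~\ref{ASSA} applied to $(u,p)$ near $y$ (where it solves the homogeneous adjoint system) combined with the global energy estimate for $u$ gives $\|u\|_{L_\infty(B_R(y))}\lesssim R^2\|f\|_{L_\infty}$, hence $\|V_\varepsilon(\cdot,y)\|_{L_1(B_R(x))}\lesssim R^2$ uniformly in $\varepsilon$. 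This is the missing idea; without it your Step~3 estimates and the compactness argument cannot get off the ground.

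Two smaller remarks. First, the test function in the weak formulation need not be divergence-free: $\phi=(1-\eta^2)V_\varepsilon^{\cdot k}$ is perfectly admissible, and the pressure term $\int\Pi_\varepsilon^k\divg\phi$ is handled by observing that $\divg\phi=-2\eta\nabla\eta\cdot V_\varepsilon^{\cdot k}$ is supported in the annulus and has mean zero, so one may subtract $(\Pi_\varepsilon^k)_{\cD}$ and invoke the pressure oscillation bound. Your Bogovski\u{\i}-corrector approach is a legitimate alternative, but it is not the obstacle you make it out to be. Second, for the representation formula the paper does not invoke interior continuity of $u$; instead it passes to the limit on the right-hand side by using the uniform $L_q$ bounds on $V_\varepsilon$, $DV_\varepsilon$, $\Pi_\varepsilon$ in $B_{R_0}(y)$ (for $q$ dual to $q_0/2$, $q_0$, $q_0$) together with the weak $Y^1_2\times L_2$ convergence outside. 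Your near/far splitting at radius $c\varepsilon_j$ can be made to work, but note that ``weak $L_2$ convergence away from $y$'' is only available on $\bR^d\setminus B_r(y)$ for \emph{fixed} $r$, so you still need the uniform $L_q$ estimates near $y$ to handle the intermediate region.
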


Our next result is about the existence of the Green function for the Stokes system on $\bR^d_+$.
We denote $d_x=\operatorname{dist}(x,\partial \bR_+^d)$ for $x\in \bR_+^d$.

\begin{theorem}		
\label{MRB}
Let $\Omega=\bR^d_+$, $d\ge 3$.
If Assumption \ref{ASSA} holds, then there exists a unique Green function $(V(x,y), \Pi(x,y))$ for the Stokes operator in $\Omega$.
Moreover, for any $x,\, y\in \bR^d_+$ satisfying  $0<|x-y|\le  \min\{d_x, d_y, R_0\}$,  we have 
\[|V(x,y)|\lesssim_{d,\lambda,C_0,\alpha_0} |x-y|^{2-d}.\]
Furthermore, the representation formula \eqref{170307@eq1} is valid.
\end{theorem}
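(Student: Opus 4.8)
The plan is to construct the Green function on $\bR^d_+$ by the standard approximation scheme used for Theorem \ref{MRA}, but adapted to the half space, and then to transfer the pointwise bound from the whole-space case via a local argument near the pole combined with the solvability of the divergence equation on $\bR^d_+$. First I would fix $y\in\bR^d_+$ and, for $\varepsilon>0$ small, solve the approximate problem
\[
\left\{
\begin{aligned}
\cL v_\varepsilon+\nabla q_\varepsilon &= \tfrac{1}{|B_\varepsilon(y)|}\mathbf 1_{B_\varepsilon(y)}e_k &&\text{in }\bR^d_+,\\
\divg v_\varepsilon &= 0 &&\text{in }\bR^d_+,\\
v_\varepsilon &= 0 &&\text{on }\partial\bR^d_+,
\end{aligned}
\right.
\]
in $\rY^1_2(\bR^d_+)^d\times L_2(\bR^d_+)$; existence and uniqueness follow from the Lax--Milgram-type argument together with the solvability of the divergence equation (to handle the pressure), exactly as in the auxiliary lemmas of Section \ref{S4}. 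Setting $V_\varepsilon^{\cdot k}(\cdot,y)=v_\varepsilon$ and $\Pi_\varepsilon^k(\cdot,y)=q_\varepsilon$, I would then derive the uniform-in-$\varepsilon$ interior and boundary Hölder/energy estimates for $V_\varepsilon$ away from $y$: on a ball (or half ball) $B_r(x^0)$ with $B_{2r}(x^0)\subset\bR^d_+\setminus\{y\}$ one uses Assumption \ref{ASSA} and Caccioppoli's inequality; on a half ball $B^+_{2r}(x^0)$ touching $\partial\bR^d_+$ one uses the homogeneous Dirichlet condition together with the observation that an odd/even reflection is not available for variable coefficients, so instead one invokes the boundary version of the local estimate — here the key input is that $(1-\eta)V_\varepsilon(\cdot,y)\in\rY^1_2(\bR^d_+)^{d\times d}$, which supplies a global energy bound $\|DV_\varepsilon(\cdot,y)\|_{L_2(\bR^d_+\setminus B_r(y))}\lesssim r^{(2-d)/2}$ by testing the equation against $(1-\eta^2)V_\varepsilon$ and using Sobolev on $\rY^1_2$.

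Next I would pass to the limit $\varepsilon\to0$ along a subsequence: the uniform local estimates give, by Arzelà--Ascoli and weak compactness in $W^1_2$ on compact subsets of $\overline{\bR^d_+}\setminus\{y\}$, a limit $V(\cdot,y)$ with $(1-\eta)V(\cdot,y)\in\rY^1_2(\bR^d_+)^{d\times d}$, and for the pressures one normalizes $\Pi_\varepsilon^k$ by subtracting suitable constants on an exhausting sequence of bounded sets and extracts an $L_{1,\mathrm{loc}}$-limit $\Pi^k(\cdot,y)$; this is where the solvability of the divergence equation reappears, since it is what lets one control $\Pi_\varepsilon$ locally in terms of $DV_\varepsilon$ and the data. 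The limiting pair satisfies \eqref{160822@eq1} and \eqref{160821@eq5} by the distributional formulation, so properties $(a)$ and $(b)$ of Definition \ref{D23} hold. Property $(c)$, the representation formula, follows by testing the equation for $(u,p)$ against $V_\varepsilon^{\cdot k}(\cdot,y)$ (legitimate after the usual cutoff near $y$ and a density argument, using $f,g\in C^\infty_c$) and letting $\varepsilon\to0$; uniqueness of $(V,\Pi)$ is then immediate from $(c)$ as noted after Definition \ref{D23}. The extension to data satisfying \eqref{170307@eq2} and to the full representation \eqref{170307@eq1} with the $D_\alpha V^{\mathrm{tr}} f_\alpha$ term is obtained exactly as in Theorem \ref{MRA}, by an additional approximation of $f_\alpha$ and the local $L_{q_0}$-regularity of $V(\cdot,y)$ and $\Pi(\cdot,y)$.

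For the pointwise bound, fix $x,y\in\bR^d_+$ with $0<|x-y|\le\min\{d_x,d_y,R_0\}=:R$. Then the ball $B_R(x)$ and the ball $B_R(y)$ both lie in $\bR^d_+$, so near the segment joining $x$ and $y$ the function $V(\cdot,y)$ solves the \emph{interior} equation \eqref{170325@eq1}, and the argument is purely local — no boundary interaction occurs within distance $R$ of either point. One therefore runs the same dyadic scaling argument as in the proof of Theorem \ref{MRA}: using Assumption \ref{ASSA} on annuli $B_{2^{j+1}\rho}(y)\setminus B_{2^{j}\rho}(y)$ with $\rho=|x-y|$, the energy estimate $\|DV(\cdot,y)\|_{L_2(B_{2\rho}(y)\setminus B_\rho(y))}\lesssim\rho^{(2-d)/2}$, and a standard iteration one obtains $|V(x,y)|\lesssim_{d,\lambda,C_0,\alpha_0}\rho^{2-d}$.

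The main obstacle I anticipate is the control of the pressure $\Pi_\varepsilon$ up to the boundary $\partial\bR^d_+$ and its passage to the limit: unlike in the interior, one cannot simply use that $\nabla q_\varepsilon$ is a gradient to localize it, and the half-space geometry forces one to rely on the quantitative solvability of $\divg w=\Pi_\varepsilon-(\Pi_\varepsilon)_{U}$ in $\rW^1_2(U)$ on bounded Lipschitz subdomains $U\subset\bR^d_+$ to recover $\|\Pi_\varepsilon-(\Pi_\varepsilon)_U\|_{L_2(U)}\lesssim\|DV_\varepsilon\|_{L_2(U)}+\text{data}$ with constants that do not blow up as $U$ approaches the boundary. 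Once this is in hand, the Hölder continuity assumption does all the remaining work and the half-space case reduces, for the pointwise estimate, to the whole-space computation.
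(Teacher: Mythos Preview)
Your overall architecture matches the paper's: build averaged Green functions $(V_\varepsilon,\Pi_\varepsilon)$ on $\bR^d_+$ via the $L_2$-solvability lemma, prove uniform-in-$\varepsilon$ estimates, extract a weak limit, and then read off the pointwise bound from the interior Hölder estimate combined with the energy bound on $\bR^d_+\setminus B_R(y)$. The representation formula and uniqueness are handled as you describe, and the final pointwise argument (purely interior once $|x-y|\le\min\{d_x,d_y,R_0\}$) is correct.

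There is, however, a genuine gap in how you obtain the uniform energy bound. You claim that testing the equation against $(1-\eta^2)V_\varepsilon$ and ``using Sobolev on $\rY^1_2$'' yields
\[
\|DV_\varepsilon(\cdot,y)\|_{L_2(\bR^d_+\setminus B_r(y))}\lesssim r^{(2-d)/2}.
\]
Testing does produce (after the pressure manipulation you correctly anticipate)
\[
\int_{\bR^d_+}(1-\eta^2)|DV_\varepsilon|^2\,dx\ \lesssim\ r^{-2}\int_{\cD_0}|V_\varepsilon|^2\,dx,
\]
with $\cD_0$ an annulus about $y$; this is exactly \eqref{160809@eq1}. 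But Sobolev only gives $\|V_\varepsilon\|_{L_2(\cD_0)}\lesssim r\,\|DV_\varepsilon\|_{L_2(\bR^d_+)}\lesssim r\,\varepsilon^{1-d/2}$, which is \emph{not} uniform in $\varepsilon$. To close the loop the paper first proves, by a \emph{duality} argument (Lemma~\ref{L51} and its half-space analogue \eqref{160908@eq1a}), the pointwise bound $|V_\varepsilon(x,y)|\lesssim |x-y|^{2-d}$ for the approximants themselves when $\varepsilon\le |x-y|/3$ and $|x-y|\le\min\{d_x,d_y,R_0\}$: one solves $\cL^*u+\nabla p=f$ with $f$ compactly supported in $B_R(x)$, observes $\int V_\varepsilon^{\cdot k}\cdot f=\fint_{B_\varepsilon(y)}u^k$, and bounds the right side via Assumption~\ref{ASSA} applied to $u$ near $y$. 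This pointwise bound is what converts $r^{-2}\|V_\varepsilon\|_{L_2(\cD_0)}^2$ into $r^{2-d}$ and makes Lemma~\ref{160920@lem1} go through. Your proposal has the logical order reversed; without the duality step, the uniform estimates you need do not follow.

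A smaller point: no boundary Hölder or $L_\infty$ estimate is required for Theorem~\ref{MRB}. Only Assumption~\ref{ASSA} (interior) is in force; the restriction $|x-y|\le\min\{d_x,d_y,R_0\}$ keeps everything inside $\bR^d_+$, so the ``boundary version of the local estimate'' you mention is neither available nor needed here. Boundary control enters only later, in Theorem~\ref{MRE}, under the additional Assumption~\ref{ASSB}.
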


Actually, we will obtain the following corollary in the middle of the proofs of the previous theorems.
But, we record it here to place useful information together.

\begin{corollary}		
\label{MRC}
Let $\Omega=\bR^d$ or $\Omega=\bR_+^d$.
The Green functions constructed in Theorem \ref{MRA} and Theorem \ref{MRB} satisfy the following estimates:
for any $y\in \Omega$ and $0<R\le \min\set{R_0, d_y}$
\begin{enumerate}[$i)$]
\item
$\|V(\cdot,y)\|_{Y^1_2(\Omega\setminus B_R(y))} \lesssim_{d,\lambda,C_0,\alpha_0} R^{1-d/2},$
\item
$\|V(\cdot,y)\|_{L_q(B_R(y))} \lesssim_{d,\lambda,C_0,\alpha_0,q} R^{2-d+d/q}, \quad q\in [1,d/(d-2)),$
\item
$\|DV(\cdot,y)\|_{L_q(B_R(y))} \lesssim_{d,\lambda,C_0,\alpha_0,q} R^{1-d+d/q}, \quad q\in[1,d/(d-1)),$
\item
$\|\Pi(\cdot,y)\|_{L_2(\Omega\setminus B_R(y))} \lesssim_{d,\lambda,C_0,\alpha_0} R^{1-d/2},$
\item
$\|\Pi(\cdot,y)\|_{L_q(B_R(y))} \lesssim_{d,\lambda,C_0,\alpha_0,q} R^{1-d+d/q}, \quad q\in[1,d/(d-1))$.
\end{enumerate}
\end{corollary}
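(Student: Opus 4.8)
The plan is to derive all five estimates as by-products of the construction of the Green function, which proceeds by a standard approximation argument: one replaces the Dirac mass $\delta_y$ by a mollified source $\rho_\varepsilon(\cdot - y)$, solves the resulting Stokes system for $(V_\varepsilon(\cdot,y), \Pi_\varepsilon(\cdot,y)) \in \rY^1_2(\Omega)^{d\times d}\times L_2(\Omega)$ via the energy method (using the solvability of the divergence equation in $\Omega$ to handle the pressure), and then passes to the limit $\varepsilon \to 0$. The five bounds are uniform-in-$\varepsilon$ estimates, and they persist in the limit by weak lower semicontinuity of the norms and Fatou. So I would state and prove each estimate for $(V_\varepsilon, \Pi_\varepsilon)$ and then remark that the limit inherits them.

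First, for $i)$: fix $R \le \min\{R_0, d_y\}$ and test the equation for $V_\varepsilon^{\cdot k}(\cdot, y)$ against $\zeta^2 V_\varepsilon^{\cdot k}(\cdot,y)$, where $\zeta$ is a cutoff vanishing on $B_{R}(y)$ and equal to $1$ outside $B_{2R}(y)$ — more precisely one uses the Caccioppoli-type inequality for the Stokes system on $\Omega \setminus B_R(y)$, where the source $\delta_y$ is absent, so $(V_\varepsilon, \Pi_\varepsilon)$ solves the homogeneous system there. Ellipticity \eqref{161123@EQ1} together with the bound on $\Pi_\varepsilon$ away from the pole (item $iv)$, proved in tandem) gives
\[
\int_{\Omega\setminus B_{2R}(y)} |DV_\varepsilon(\cdot,y)|^2\,dx \lesssim R^{-2}\int_{B_{2R}(y)\setminus B_R(y)} |V_\varepsilon(\cdot,y)|^2\,dx + (\text{pressure term}),
\]
and then one iterates over dyadic annuli and sums, using the pointwise bound \eqref{160822_eq1} already available from Theorem \ref{MRA}/\ref{MRB} (or rather its uniform-in-$\varepsilon$ counterpart) to control $|V_\varepsilon|\lesssim |x-y|^{2-d}$ on each annulus; the geometric series in $R$ converges because $d \ge 3$, yielding $\|DV_\varepsilon(\cdot,y)\|_{L_2(\Omega\setminus B_R(y))}\lesssim R^{1-d/2}$, and the Sobolev inequality on $\rY^1_2$ upgrades this to the full $Y^1_2$ norm. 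The estimate $iv)$ for $\Pi_\varepsilon$ on $\Omega\setminus B_R(y)$ comes from the same test-function computation combined with the solvability of $\divg w = \Pi_\varepsilon - (\Pi_\varepsilon)_{\ast}$ (Bogovskii-type operator) localized to the annular region, which is exactly where the hypothesis "the divergence equation is solvable in $\Omega$" enters; this is the step I expect to be the main obstacle, since one must carefully track constants through the localization and confirm the Bogovskii operator behaves correctly on annuli $B_{2R}\setminus B_R$ with the right scaling.

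For $ii)$, $iii)$, $v)$ — the estimates on the small ball $B_R(y)$ — I would argue by duality and the representation formula. For $iii)$: given $h \in L_{q'}(B_R(y))^{d\times d}$ with $q' > d$, solve the adjoint Stokes system $\cL^* w + \nabla \pi = D_\alpha(\mathbf{1}_{B_R}h_\alpha)$-type problem (or $\cL^* w + \nabla\pi = \mathbf{1}_{B_R}h$, adjusting indices), obtaining $w \in \rY^1_2$ with $\|Dw\|_{L_2} \lesssim \|h\|_{L_{q'}} R^{\cdots}$ by the energy estimate and Sobolev/Hölder on the bounded ball; then the representation formula \eqref{170307@eq1} (valid for such data by Theorem \ref{MRA}) gives $\int DV_\varepsilon(\cdot,y)^{\mathrm{tr}} h = \pm w(y)$ or a combination, and one bounds $|w(y)|$ by the local $C^{\alpha_0}$-estimate of Assumption \ref{ASSA} applied to $w$ on a small ball, followed by the interior $L^2 \to L^\infty$ bound, and finally by $\|Dw\|_{L_2}$. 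Optimizing over the radius and chasing the exponents reproduces $R^{1-d+d/q}$ for $q \in [1, d/(d-1))$; the constraint on $q$ reflects exactly the borderline Sobolev exponent dual to $q' > d$. The bound $ii)$ for $V_\varepsilon$ itself follows the same template but one tests against $h \in L_{q'}(B_R(y))^d$ and uses the representation with $f = \mathbf{1}_{B_R}h$, giving the exponent $2-d+d/q$ for $q < d/(d-2)$. The bound $v)$ for $\Pi_\varepsilon$ on $B_R(y)$ is the most delicate: one recovers the pressure from the momentum equation after subtracting its average, using the divergence-equation solvability once more to realize $\Pi_\varepsilon - (\Pi_\varepsilon)_{B_R(y)}$ as a duality pairing against $\divg$ of a test field, then proceeds as above. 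Throughout, the key structural points are: (a) all constants depend only on $d, \lambda, C_0, \alpha_0$ (and $q$ where indicated) because the only nonstandard input is Assumption \ref{ASSA}; (b) the passage $\varepsilon \to 0$ is harmless for each estimate; and (c) the scaling exponents are forced and can be checked by a dimensional-analysis sanity check against the model case $V(x,y) \sim |x-y|^{2-d}$, $\Pi(x,y)\sim|x-y|^{1-d}$.
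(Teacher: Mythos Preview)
Your plan for $i)$ and $iv)$ is essentially the paper's: a Caccioppoli-type computation with a cutoff supported in an annulus, the pressure controlled locally by the Bogovski\u{\i}/divergence lemma on annuli, and the right-hand side estimated by the already-proved pointwise bound $|V_\varepsilon(x,y)|\lesssim|x-y|^{2-d}$ on that annulus. (The paper uses a single annulus rather than a dyadic sum, and proves $i)$ first and then deduces $iv)$ from it rather than doing them ``in tandem'', but these are cosmetic differences.)

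The duality argument you propose for $ii)$, $iii)$, $v)$, however, has a real gap. You want to solve $\cL^* w+\nabla\pi = D_\alpha(\mathbf{1}_{B_R(y)}h_\alpha)$ (or $=\mathbf{1}_{B_R(y)}h$) and then bound $|w(y)|$ via Assumption~\ref{ASSA}. But Assumption~\ref{ASSA} applies only to \emph{homogeneous} solutions, and your $w$ satisfies an inhomogeneous equation precisely on $B_R(y)$, the ball containing $y$. With merely measurable coefficients there is no general $L^\infty$ or H\"older bound for inhomogeneous Stokes systems, so you cannot conclude that $w$ is even continuous at $y$, let alone estimate $w(y)$. (Contrast this with the duality used for the pointwise bound \eqref{160822_eq1}: there the data is supported in $B_R(x)$ with $x$ far from $y$, so the adjoint solution \emph{is} homogeneous near $y$.)

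The paper avoids this entirely by deriving $ii)$, $iii)$, $v)$ from $i)$ and $iv)$ via a layer-cake/weak-type argument: from $i)$ one has $\|V_\varepsilon(\cdot,y)\|_{L_{2d/(d-2)}(\Omega\setminus B_\rho(y))}\lesssim\rho^{1-d/2}$ and $\|DV_\varepsilon(\cdot,y)\|_{L_2(\Omega\setminus B_\rho(y))}\lesssim\rho^{1-d/2}$ for all $0<\rho\le R_0$, which give level-set bounds $|\{|V_\varepsilon|>t\}|\lesssim t^{-d/(d-2)}$ and $|\{|DV_\varepsilon|>t\}|\lesssim t^{-d/(d-1)}$ for large $t$; integrating $q t^{q-1}|\{\cdot>t\}\cap B_R(y)|$ then gives $ii)$ and $iii)$ exactly in the stated ranges of $q$. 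Item $v)$ is obtained the same way from $iv)$. This route uses nothing beyond what you have after $i)$ and $iv)$, and in particular never needs regularity of an inhomogeneous problem.
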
		

\begin{remark}
\label{R11}
Theorem \ref{MRA}, Theorem \ref{MRB}, and Corollary \ref{MRC} continue to hold for the adjoint system under Assumption \ref{ASSA}.
\end{remark}

\begin{corollary}		
\label{161122@cor1}
Let $\Omega=\bR^d$ or $\Omega=\bR_+^d$.
Let $({}^*V(x,y),{}^*\Pi(x,y))$ be the Green function for the adjoint problem.
Then for $x\neq y$ 
\begin{equation}		\label{161120@eq1}
V(x,y)={}^*V(y,x)^{\operatorname{tr}}.
\end{equation}
Moreover, if $(u, p) \in \rY^1_2(\Omega)^d\times L_2(\Omega)$ satisfies 
\[\left\{
\begin{aligned}
\cL u+\nabla p =f+D_\alpha f_\alpha &\quad \text{in }\, \Omega,\\
\divg u=0 &\quad \text{in }\, \Omega,
\end{aligned}
\right.\]
with  \eqref{170307@eq2}, then 
\begin{equation}		\label{161122@eq8}
u(y)=\int_{\Omega} V(y,\cdot)f\,dx-\int_{\Omega} D_\alpha V(y,\cdot)f_\alpha\,dx.
\end{equation}
\end{corollary}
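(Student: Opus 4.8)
The plan is to prove the symmetry relation \eqref{161120@eq1} first, and then deduce the representation formula \eqref{161122@eq8} as a consequence of it combined with the representation formula \eqref{170307@eq1} already established in Theorems \ref{MRA} and \ref{MRB}. For the symmetry, I would proceed by the standard duality/mollification argument. Fix $x_0\neq y_0$ in $\Omega$ and a small $\varepsilon>0$ with $B_\varepsilon(x_0)\cap B_\varepsilon(y_0)=\emptyset$. Consider the mollified problems: let $(u_\varepsilon,p_\varepsilon)$ solve $\cL u_\varepsilon+\nabla p_\varepsilon = \fint_{B_\varepsilon(x_0)}\vec I\,\mathbf 1_{B_\varepsilon(x_0)}$, $\divg u_\varepsilon=0$, and let $(v_\varepsilon,q_\varepsilon)$ solve the adjoint system $\cL^* v_\varepsilon+\nabla q_\varepsilon = \mathbf 1_{B_\varepsilon(y_0)}\fint\vec I$, $\divg v_\varepsilon=0$, both in $\rY^1_2(\Omega)^d\times L_2(\Omega)$. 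Testing the equation for $u_\varepsilon$ against $v_\varepsilon$ and the equation for $v_\varepsilon$ against $u_\varepsilon$, the bilinear form $\int_\Omega A^{\alpha\beta}D_\beta u_\varepsilon\cdot D_\alpha v_\varepsilon\,dx$ appears on both sides; the pressure terms $\int p_\varepsilon\divg v_\varepsilon$ and $\int q_\varepsilon\divg u_\varepsilon$ both vanish because $\divg u_\varepsilon=\divg v_\varepsilon=0$. This yields $\fint_{B_\varepsilon(x_0)}\!\big(\fint_{B_\varepsilon(y_0)} v_\varepsilon\big)=\fint_{B_\varepsilon(y_0)}\!\big(\fint_{B_\varepsilon(x_0)} u_\varepsilon\big)$ componentwise, i.e. an identity between averages of $u_\varepsilon$ and $v_\varepsilon$.

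Next I would identify $u_\varepsilon$ and $v_\varepsilon$ with the Green functions. By the representation formula in Definition \ref{D23}(c) (applied in the appropriate direction), $u_\varepsilon^k(z)=\fint_{B_\varepsilon(x_0)}{}^*V(\cdot,z)^{\operatorname{tr}}\vec e_k\,dx$ type expressions, and similarly for $v_\varepsilon$ in terms of $V$; more directly, $\fint_{B_\varepsilon(y_0)}u_\varepsilon = \fint_{B_\varepsilon(y_0)}\fint_{B_\varepsilon(x_0)}V(\cdot,\cdot)$-averages, so the matrix identity above becomes $\fint_{B_\varepsilon(x_0)}\fint_{B_\varepsilon(y_0)}{}^*V(w,z)^{\operatorname{tr}}\,dz\,dw = \fint_{B_\varepsilon(y_0)}\fint_{B_\varepsilon(x_0)}V(w,z)\,dw\,dz$. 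Letting $\varepsilon\to 0$ and using that $V(\cdot,y_0)$ and ${}^*V(\cdot,x_0)$ are continuous away from their poles (a consequence of Assumption \ref{ASSA}, as established in the course of proving Theorems \ref{MRA}, \ref{MRB}), the Lebesgue differentiation theorem gives $V(x_0,y_0)={}^*V(y_0,x_0)^{\operatorname{tr}}$, which is \eqref{161120@eq1}. For the second assertion, one applies \eqref{170307@eq1} to the \emph{adjoint} Green function ${}^*V$: if $(u,p)$ solves $\cL u+\nabla p=f+D_\alpha f_\alpha$, $\divg u=0$, then the analogue of \eqref{170307@eq1} for the adjoint system (valid by Remark \ref{R11}) reads $u(y)=\int_\Omega {}^*V(\cdot,y)^{\operatorname{tr}}f\,dx-\int_\Omega D_\alpha{}^*V(\cdot,y)^{\operatorname{tr}}f_\alpha\,dx$, where the pressure term drops out since $g=0$. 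Now substitute ${}^*V(x,y)^{\operatorname{tr}}=V(y,x)$ from \eqref{161120@eq1} (and hence $D_\alpha{}^*V(x,y)^{\operatorname{tr}}=D_{x_\alpha}V(y,x)$) to obtain exactly \eqref{161122@eq8}.

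The main obstacle is the rigorous justification of the limit $\varepsilon\to 0$ and of the identification of the mollified solutions with averaged Green functions: one must know that $V(\cdot,y)$ is locally integrable with the decay from Corollary \ref{MRC}, that ${}^*V(\cdot,x)$ enjoys the symmetric bounds (Remark \ref{R11}), and that both are genuinely continuous near the off-diagonal points $x_0,y_0$ so that the double averages converge to pointwise values. All of this is available from the construction in Theorems \ref{MRA}--\ref{MRB} and Corollary \ref{MRC}, so the argument is essentially bookkeeping once those are in hand; the only delicate point is keeping track of transposes and of which system (direct vs. adjoint) each representation formula applies to, since $\cL^{**}=\cL$ means the roles of $V$ and ${}^*V$ are genuinely interchanged under the duality pairing.
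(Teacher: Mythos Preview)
Your overall approach---mollified Green functions, duality testing, then a limit---is exactly the paper's strategy, and your derivation of \eqref{161122@eq8} from \eqref{161120@eq1} via the adjoint version of \eqref{170307@eq1} matches the paper's one-line argument. However, there is a subtle but genuine gap in your passage to the limit for \eqref{161120@eq1}.

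You write the duality identity as an equality of double averages
\[
\fint_{B_\varepsilon(x_0)}\fint_{B_\varepsilon(y_0)}{}^*V(w,z)^{\operatorname{tr}}\,dz\,dw
= \fint_{B_\varepsilon(y_0)}\fint_{B_\varepsilon(x_0)}V(w,z)\,dw\,dz,
\]
and then let $\varepsilon\to 0$ citing continuity of $V(\cdot,y_0)$ and ${}^*V(\cdot,x_0)$. But those are \emph{first-variable} continuity statements. After you use them to collapse the inner average (which does work, with uniform H\"older bounds from Assumption~\ref{ASSA}), you are left with single averages of $V(y_0,\cdot)$ near $x_0$ and of ${}^*V(x_0,\cdot)$ near $y_0$---i.e., averages in the \emph{second} variable. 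Continuity of $V$ in its second argument is precisely what the symmetry relation \eqref{161120@eq1} would give, so invoking it here is circular; and the construction of $V(\cdot,y)$ in Theorems~\ref{MRA}--\ref{MRB} proceeds pole-by-pole, so no second-variable regularity is available a priori.

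The paper circumvents this by using two independent mollification parameters $\varepsilon$ and $\delta$ and taking iterated limits. From
\[
\fint_{B_\varepsilon(y)}{}^*V_\delta^{k\ell}(\cdot,x)\,dz
=\fint_{B_\delta(x)}V_\varepsilon^{\ell k}(\cdot,y)\,dz,
\]
one first sends $\delta\to 0$: the right side converges because $V_\varepsilon(\cdot,y)$ is a genuine $Y^1_2$ solution, hence continuous at $x$; the left side converges by the uniform-on-compacta convergence ${}^*V_{\delta_\tau}\to{}^*V$ established separately as Lemma~\ref{L55}. Then one sends $\varepsilon\to 0$, now using Lemma~\ref{L55} for $V_{\varepsilon_\rho}\to V$ on the left and first-variable continuity of ${}^*V(\cdot,x)$ on the right. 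At no stage is second-variable continuity needed. Your argument can be repaired along these lines (or by invoking the convergence $V_{\varepsilon_\rho}\to V$ directly rather than passing through double averages of $V$ itself), but as written the justification is incomplete.
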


\begin{remark}		\label{0717@rem1}
When $\cL=\cL^*$, i.e., $A^{\alpha\beta}_{ij}=A^{\beta\alpha}_{ji}$, we have $V(x,y)=V(y,x)^{\operatorname{tr}}$ from \eqref{161120@eq1}.
\end{remark}

The following theorem shows some examples satisfying Assumption \ref{ASSA}.

\begin{theorem}
\label{MRD}
\begin{enumerate}[(a)]
\item
If the coefficients $A^{\alpha\beta}$ of $\cL$ are merely measurable functions of only  one fixed direction, i.e.,
\[A^{\alpha\beta}=A^{\alpha\beta}(x_k) \, \text{ for some }  k\in \{1,\ldots,d\},\]
then for any $\alpha_0\in (0,1)$ and $R_0\in (0,\infty)$, Assumption \ref{ASSA} holds with $C_0=C_0(d,\lambda,\alpha_0)$.
\item
Let $\alpha_0\in (0,1)$.
There exists a constant $\gamma\in (0,1)$, depending on $d$, $\lambda$, and $\alpha_0$, such that if 
\[\sup_{x\in \bR^d}\sup_{r\le R_0}\fint_{B_r(x)}\bigg|A^{\alpha\beta}(y_1,y')-\fint_{B_r'(x')}A^{\alpha\beta}(y_1,z')\,dz'\bigg|\,dy\le \gamma,\]
for some $R_0\in (0,\infty)$, then  Assumption \ref{ASSA} holds with $C_0=C_0(d,\lambda,\alpha_0)$.
The statement remains true, provided that $y_1$ and $y'$ are replaced by $y_k$ and $(y_1,\ldots,y_{k-1},y_{k+1},\ldots,y_d)$, respectively.
\end{enumerate}
\end{theorem}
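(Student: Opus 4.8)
The plan is to verify, for both parts of Theorem~\ref{MRD}, the interior H\"older estimate demanded by Assumption~\ref{ASSA}; the mechanism will be to bootstrap the interior integrability of $Du$ up to every finite exponent and then invoke the Sobolev--Morrey embedding $W^1_q\hookrightarrow C^{1-d/q}$ for $q>d$. First I would reduce (a) to (b): if $A^{\alpha\beta}=A^{\alpha\beta}(x_k)$, then, taking the distinguished direction in (b) to be $x_k$ (so that $y_1,y'$ are read as $y_k$ and the remaining variables), the inner average $\fint_{B_r'(x')}A^{\alpha\beta}(y_1,z')\,dz'$ equals $A^{\alpha\beta}(y_k)$ and the partial $\BMO$ quantity vanishes identically; hence the hypothesis of (b) holds with \emph{every} $R_0\in(0,\infty)$ and a constant depending only on $d,\lambda,\alpha_0$, which is exactly the assertion of (a). It therefore suffices to treat (b).

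By the rescaling $x\mapsto x^0+Rx$, which preserves the ellipticity constants and, since $R\le R_0$, the smallness of the partial $\BMO$ seminorm at scales $\le1$, the claim reduces to the scale-free statement: if $(u,p)\in W^1_2(B_1)^d\times L_2(B_1)$ solves $\cL u+\nabla p=0$, $\divg u=0$ in $B_1$, then $[u]_{C^{\alpha_0}(B_{1/2})}\lesssim_{d,\lambda,\alpha_0}\|u\|_{L_2(B_1)}$, which is the scale-invariant form of the inequality in Assumption~\ref{ASSA}. The key input is the $W^1_q$-solvability of the Stokes--Dirichlet problem on a ball $B_\rho$ with $\rho\le R_0$ and partially $\BMO$ coefficients, as established in \cite{arXiv:1604.02690v2,arXiv:1702.07045v1} (cf. \cite{arXiv:1503.07290v3} for the $\VMO$ and one-variable cases): for each $q\in(1,\infty)$ there is $\gamma_q=\gamma_q(d,\lambda,q)>0$ so that, whenever the partial $\BMO$ seminorm at scales $\le\rho$ is at most $\gamma_q$, the solution $(v,\pi)\in\rW^1_q(B_\rho)^d\times L_q(B_\rho)$ of $\cL v+\nabla\pi=h+D_\alpha h_\alpha$, $\divg v=s$ (with $(\pi)_{B_\rho}=0$) obeys $\|Dv\|_{L_q(B_\rho)}+\|\pi\|_{L_q(B_\rho)}\lesssim\|h\|_{W^{-1}_q(B_\rho)}+\|h_\alpha\|_{L_q(B_\rho)}+\|s\|_{L_q(B_\rho)}$.

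For the bootstrap, fix $\zeta\in C_c^\infty(B_\rho)$ with $\zeta\equiv1$ on $B_{\rho/2}$, and suppose $(u,p)\in W^1_{q_1}(B_\rho)^d\times L_{q_1}(B_\rho)$ solves the homogeneous system. Then $w=\zeta u$ and $\pi=\zeta\big(p-(p)_{B_\rho}\big)$ solve
\[\cL w+\nabla\pi=-A^{\alpha\beta}D_\beta u\,D_\alpha\zeta+\big(p-(p)_{B_\rho}\big)\nabla\zeta-D_\alpha\big(A^{\alpha\beta}u\,D_\beta\zeta\big),\qquad \divg w=u\cdot\nabla\zeta.\]
The first two right-hand terms lie in $L_{q_1}$, hence in $W^{-1}_{q_1^{*}}$ with $q_1^{*}=dq_1/(d-q_1)$ by the (borderline) Sobolev embedding, while $A^{\alpha\beta}u\,D_\beta\zeta$ and $u\cdot\nabla\zeta$ lie in $L_{q_1^{*}}$ since $u\in W^1_{q_1}(B_\rho)\hookrightarrow L_{q_1^{*}}(B_\rho)$; applying the solvability estimate above with exponent $q_1^{*}$ and uniqueness then promotes $(u,p)$ to $W^1_{q_1^{*}}\times L_{q_1^{*}}$ on $B_{\rho/2}$. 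Starting from $q_0=2$ (and using the local pressure bound $\|p-(p)_{B_\rho}\|_{L_2(B_\rho)}\lesssim_{d,\lambda}\|Du\|_{L_2(B_\rho)}$, which comes from the Ne\v cas inequality applied to $\nabla p=D_\alpha(A^{\alpha\beta}D_\beta u)$, cf. \cite{MR2808162}), the exponents obey $1/q_{n+1}=1/q_n-1/d$, so after at most $\lceil d/2\rceil$ steps one reaches $q_n\ge d$, and one further step---using that $L_q\hookrightarrow W^{-1}_r$ for every $r<\infty$ once $q\ge d$---reaches any prescribed $q<\infty$. Running this over a chain of radii decreasing to $1/2$ yields $Du\in L_q(B_{1/2})$ for all $q<\infty$ together with the quantitative bound; combining with the Caccioppoli inequality for the homogeneous Stokes system applied to $(u-c,p)$ (legitimate because $\cL$ annihilates constants), the local pressure bound, and the variance estimate $\|u-(u)_{B_1}\|_{L_2(B_1)}\le\|u\|_{L_2(B_1)}$ gives $\|Du\|_{L_q(B_{1/2})}\lesssim_{d,\lambda,\alpha_0}\|u\|_{L_2(B_1)}$. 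Choosing $q>d/(1-\alpha_0)$ and applying Morrey's inequality on $B_{1/2}$ yields $[u]_{C^{\alpha_0}(B_{1/2})}\lesssim_{d,\lambda,\alpha_0}\|u\|_{L_2(B_1)}$; taking $\gamma$ to be the minimum of the finitely many thresholds $\gamma_q$ invoked (so $\gamma=\gamma(d,\lambda,\alpha_0)$) finishes (b), and the identical argument, now for the transposed coefficients, handles $\cL^{*}$.

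I expect the main obstacle to be the localization step: multiplying a solution by a cutoff destroys the divergence-free condition and genuinely couples the velocity with the pressure, so---unlike for a single elliptic system---the pressure cannot be dropped but must be carried through every stage of the iteration, and one must verify at each step that the induced data still sit in the spaces for which the $W^1_q$-solvability applies, with the $L_q\to W^1_q$ gain not spoiled. A secondary, routine point is the threshold $q_n=d$ in the iteration, circumvented by passing through an exponent slightly larger than $d$. Finally, the restriction to \emph{some} $R_0$ in (b) is precisely the price of relying on the $W^1_q$-estimate with solution-independent constants at all scales $\le R_0$: in (a) the transverse oscillation is identically zero, the estimate is available at every scale, and hence all $R_0\in(0,\infty)$ are admissible.
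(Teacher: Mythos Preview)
Your proof is correct and takes a genuinely different route from the paper's. The paper proceeds by a classical Campanato/freezing-coefficients argument: on each ball $B_{2r}(y)\subset B_R(x^0)$ one introduces the operator $\cL_0$ with coefficients $A_0^{\alpha\beta}(x_1)=\fint_{B_r'(y')}A^{\alpha\beta}(x_1,z')\,dz'$, solves the comparison problem $\cL_0 u_1+\nabla p_1=-\cL u+\cL_0 u$, $\divg u_1=0$ in $\mathring W^1_2(B_r(y))$, bounds $\|Du_1\|_{L_2}\lesssim\gamma\|Du\|_{L_2(B_{2r})}$ via H\"older and the reverse H\"older inequality (Lemma~\ref{161127@lem1}), and then uses the $L_\infty$-bound for $Du_2=D(u-u_1)$ (available because the frozen coefficients depend only on $x_1$; Lemma~\ref{161014@lem5}/Corollary~\ref{161014@cor5}) to obtain the decay $\|Du\|_{L_2(B_\rho)}\lesssim((\rho/r)^{d/2}+\gamma)\|Du\|_{L_2(B_{2r})}$. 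Choosing $\gamma$ small and iterating gives Campanato-type decay $\|Du\|_{L_2(B_\rho)}\lesssim(\rho/r)^{d/2-1+\alpha_0}\|Du\|_{L_2(B_r)}$, which after the Caccioppoli inequality and Morrey--Campanato yields the H\"older estimate. You instead treat the full $W^1_q$ solvability of the Stokes Dirichlet problem on a ball (from \cite{arXiv:1604.02690v2,arXiv:1702.07045v1}) as a black box and bootstrap the integrability of $(Du,p)$ through a cutoff localization up to any finite exponent, then apply Morrey's embedding. Your approach is shorter once that $L_q$-theory is available, but that theory is itself established by freezing/perturbation arguments close to the paper's Steps~2--3; the paper's proof is thus more self-contained, using only $L_2$ estimates, Gehring's lemma, and the explicit structure of one-variable-dependent coefficients. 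One cosmetic point: the localized pressure $\pi=\zeta\big(p-(p)_{B_\rho}\big)$ need not have mean zero on $B_\rho$, so uniqueness should be invoked for the pair $(w,\pi-(\pi)_{B_\rho})$ and the constant restored afterwards.
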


Next we consider the pointwise bound for the Green function on half space under the additional assumption.

\begin{assumption}		
\label{ASSB}
There exist positive numbers $R_1$ and $C_1$ such that if $(u,p)\in W^1_2(\bR^d_+\cap B_R(x^0))^d\times L_2(\bR^d_+\cap B_R(x^0))$ satisfies
\begin{equation}		\label{161025@eq1}
\left\{
\begin{aligned}
&\cL u+\nabla p =0, \quad \divg u=0 \quad \text{in }\, \bR^d_+\cap B_R(x^0),\\
&u=0 \quad \text{on }\, \partial\bR^d_+ \cap B_R(x^0),
\end{aligned}
\right.
\end{equation}
for some $x^0\in \partial\bR^d_+$ and $0<R\le R_1$, then 
\begin{equation}		\label{161019@aa1}
\|u\|_{L_\infty(\bR^d_+\cap B_{R/2}(x^0))}\le C_1\left(\frac{1}{R^d}\int_{\bR^d_+\cap B_R(x^0)}|u|^2\,dx\right)^{1/2}.
\end{equation}
The same estimate holds true if $\cL$ is replaced by $\cL^*$.
\end{assumption}

\begin{theorem}		
\label{MRE}
Suppose that  Assumptions \ref{ASSA} and \ref{ASSB} hold.
Let $(V(x,y),\Pi(x,y))$ be the Green function constructed in Theorem \ref{MRB}.
Then for any $x,\, y\in \bR^d_+$ satisfying $0<|x-y|\le \min\{R_0,R_1\}$, 
\begin{equation}		\label{161019@eq5}
|V(x,y)|\lesssim_{d,\lambda,C_0,\alpha_0,C_1} |x-y|^{2-d}.
\end{equation}
Moreover, for any $y\in \bR^d_+$ and $0<R\le \min\{R_0,R_1\}$, 
\begin{enumerate}[$i)$]
\item
$\|V(\cdot,y)\|_{Y^1_2(\bR^d_+\setminus B_R(y))} \lesssim_{d,\lambda,C_0,\alpha_0,C_1} R^{1-d/2}$,
\item
$\|V(\cdot,y)\|_{L_q(\bR^d_+\cap B_R(y))} \lesssim_{d,\lambda,C_0,\alpha_0,C_1,q} R^{2-d+d/q}, \quad q\in [1,d/(d-2)),$
\item
$\|DV(\cdot,y)\|_{L_q(\bR^d_+\cap B_R(y))} \lesssim_{d,\lambda,C_0,\alpha_0,C_1,q} R^{1-d+d/q}, \quad q\in[1,d/(d-1)),$
\item
$\|\Pi(\cdot,y)\|_{L_2(\bR^d_+\setminus B_R(y))} \lesssim_{d,\lambda,C_0,\alpha_0,C_1} R^{1-d/2},$
\item
$\|\Pi(\cdot,y)\|_{L_q(\bR^d_+\cap B_R(y))} \lesssim_{d,\lambda,C_0,\alpha_0,C_1,q} R^{1-d+d/q}, \quad q\in[1,d/(d-1))$.
\end{enumerate}
\end{theorem}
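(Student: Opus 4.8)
The plan is to first reduce the pointwise estimate to a one-sided statement, then establish the integral bounds i)--v), and finally deduce \eqref{161019@eq5} from i) by a localization argument near the boundary; the scheme parallels the proof of Corollary \ref{MRC} and the global estimate of Kang--Kim \cite{MR2718661}, the only new input being Assumption \ref{ASSB}. By Corollary \ref{161122@cor1} we have $V(x,y)={}^*V(y,x)^{\operatorname{tr}}$, and by Remark \ref{R11} every conclusion for $\cL$ holds verbatim for $\cL^*$; hence it is enough to bound $|V(x,y)|$ under the extra assumption $d_y\le d_x$. Write $r=|x-y|$ and suppose $0<r\le\min\{R_0,R_1\}$. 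If $r\le d_y$ then $r\le\min\{d_x,d_y,R_0\}$ and \eqref{161019@eq5} is already contained in Theorem \ref{MRB}, so we may assume $d_y<r$, the genuinely new regime.

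\emph{Step 1: the integral estimates i)--v).} These extend Corollary \ref{MRC} beyond the interior range $R\le d_y$, and I would prove them by the scheme used there, now also exploiting that $V(\cdot,y)$ solves a homogeneous Dirichlet problem away from the pole. The ingredients are: (a) Caccioppoli inequalities for the columns $V^{\cdot k}(\cdot,y)$, obtained by testing $\cL V^{\cdot k}(\cdot,y)+\nabla\Pi^k(\cdot,y)=0$ against $\zeta^2V^{\cdot k}(\cdot,y)$ for cutoffs $\zeta$ vanishing near $y$ --- a legitimate element of $\rY^1_2(\bR^d_+)$ by property $(a)$ of Definition \ref{D23}; (b) the Sobolev embedding $\rY^1_2\hookrightarrow L_{2d/(d-2)}$; (c) Poincar\'e's inequality on dyadic annuli $B_{2^{j+1}R}(y)\setminus B_{2^{j}R}(y)$, where, since $2^{j}R>d_y$, one uses that $V(\cdot,y)$ vanishes on the portion of $\partial\bR^d_+$ meeting the annulus; (d) the solvability of the divergence equation on annular regions (a Bogovskii-type operator), which controls $\Pi^k(\cdot,y)$ minus its average by $DV^{\cdot k}(\cdot,y)$ in $L_2$; and (e) a Gr\"uter--Widman hole-filling iteration, seeded by the bounds of Corollary \ref{MRC} at the scale $R=d_y$ (note $d_y<r\le R_0$ in this regime), which propagates the rates $R^{1-d/2}$, $R^{2-d+d/q}$, $R^{1-d+d/q}$ out to $R=\min\{R_0,R_1\}$, the exponent bookkeeping using $d\ge3$ so that the accumulated error does not spoil the bound. (These last steps may be more naturally organized as a single induction on dyadic scales.)

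\emph{Step 2: the pointwise bound.} With $d_y<r\le\min\{R_0,R_1\}$, distinguish two cases according to $d_x$. If $d_x\ge r/4$, then $B_{r/8}(x)\subset\bR^d_+$, $y\notin B_{r/8}(x)$, and $(V(\cdot,y),\Pi(\cdot,y))$ solves \eqref{161121@EQ1} in $B_{r/8}(x)$; Assumption \ref{ASSA}, upgraded from a H\"older seminorm bound to a sup bound in the standard way, gives
\[|V(x,y)|\lesssim\Big(r^{-d}\int_{B_{r/8}(x)}|V(\cdot,y)|^2\,dx\Big)^{1/2}.\]
If instead $d_x<r/4$, let $\bar x\in\partial\bR^d_+$ with $|x-\bar x|=d_x$; then $x\in B_{r/4}(\bar x)$, $y\notin B_{r/2}(\bar x)$, and $(V(\cdot,y),\Pi(\cdot,y))$ solves \eqref{161025@eq1} in $\bR^d_+\cap B_{r/2}(\bar x)$, so Assumption \ref{ASSB} yields
\[|V(x,y)|\le\|V(\cdot,y)\|_{L_\infty(\bR^d_+\cap B_{r/4}(\bar x))}\lesssim\Big(r^{-d}\int_{\bR^d_+\cap B_{r/2}(\bar x)}|V(\cdot,y)|^2\,dx\Big)^{1/2}.\]
In both cases the region of radius $\sim r$ on the right lies in $\bR^d_+\setminus B_{r/4}(y)$; by estimate i) and the Sobolev embedding, $\|V(\cdot,y)\|_{L_{2d/(d-2)}(\bR^d_+\setminus B_{r/4}(y))}\lesssim r^{1-d/2}$, and H\"older's inequality on a ball of radius $\sim r$ bounds the $L_2$ integral by $r^{4-d}$. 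Hence $|V(x,y)|\lesssim(r^{-d}\cdot r^{4-d})^{1/2}=r^{2-d}$, which is \eqref{161019@eq5}.

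\emph{Main obstacle.} The delicate part is Step 1: the pressure $\Pi(\cdot,y)$ carries no sign and is a genuine function on $\bR^d_+$ rather than a locally defined one, so its local averages must be carried through the hole-filling iteration instead of being normalized away (their decay being recovered from the $L_2$-integrability of $\Pi(\cdot,y)$ near infinity supplied by Corollary \ref{MRC}); and the Poincar\'e inequalities on the dyadic annuli must be set up so that the partial vanishing of $V(\cdot,y)$ on $\partial\bR^d_+$ is used with constants uniform across scales. Once i)--v) are in hand, Step 2 is short.
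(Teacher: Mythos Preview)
Your plan reverses the order of the paper's proof, and the reversal does not close. The gap is in Step 1. A Gr\"uter--Widman hole-filling on dyadic half-annuli, seeded at scale $R=d_y$ by Corollary \ref{MRC}, produces at best a recursion of the type
\[
\phi(cR)\le \theta\,\phi(R),\qquad \phi(R):=\|DV(\cdot,y)\|^2_{L_2(\bR^d_+\setminus B_R(y))},
\]
with $\theta=\tfrac{C}{C+1}<1$ determined by the Caccioppoli and Poincar\'e constants. Iterating from $R=d_y$ gives $\phi(R)\lesssim d_y^{\,2-d}(R/d_y)^{-\alpha}$ for a small $\alpha=\alpha(\theta)>0$, not the sharp $\phi(R)\lesssim R^{2-d}$. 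The phrase ``exponent bookkeeping using $d\ge3$'' does not repair this: the rate produced by hole-filling is fixed by $\theta$, not by the dimension, and there is no mechanism in your ingredient list (a)--(e) that upgrades $\alpha$ to $d-2$. Feeding this defective $i)$ into your Step 2 then yields a pointwise bound that still depends on $d_y$, not \eqref{161019@eq5}.

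The paper (following Kang--Kim \cite{MR2718661}) proceeds in the opposite order and avoids iteration entirely. It first establishes \eqref{161019@eq5} by duality: for $R=|x-y|$ and $f\in L_\infty$ supported in $\bR^d_+\cap B_{R/2}(x)$, the solution $(u,p)$ of $\cL^*u+\nabla p=f$, $\operatorname{div}u=0$ satisfies $u(y)=\int V(\cdot,y)^{\operatorname{tr}}f$ by property $(c)$ of Definition \ref{D23}; your own Step 2 dichotomy (Assumptions \ref{ASSA} and \ref{ASSB} via Lemma \ref{161121@lem7}), applied to $u$ near $y$, together with the energy estimate \eqref{160721@eq2}, gives $|u(y)|\lesssim R^{2}\|f\|_{L_\infty}$, hence $\|V(\cdot,y)\|_{L_1(\bR^d_+\cap B_{R/2}(x))}\lesssim R^2$; a second application of the dichotomy, now to $V(\cdot,y)$ near $x$, yields \eqref{161019@eq5}. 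Only afterwards are $i)$--$v)$ derived, and \eqref{161019@eq5} is the key input: on the half-annulus $\cD_0^+=B^+_{5R/8}(y^0)\setminus\overline{B_{R/8}(y^0)}$ one has $|V(\cdot,y)|\lesssim R^{2-d}$ directly, so $\int_{\cD_0^+}|V(\cdot,y)|^2\lesssim R^{4-d}$, and a single Caccioppoli step (no iteration) gives $i)$ with the correct exponent; $ii)$--$v)$ then follow as in Lemmas \ref{L53} and \ref{L54}.
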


The following theorem shows some examples satisfying Assumption \ref{ASSB}.

\begin{theorem}
\label{MRF}
\begin{enumerate}[(a)]
\item
If the coefficients $A^{\alpha\beta}$ of $\cL$ are merely measurable functions of only  $x_1$-direction, i.e.,
\[A^{\alpha\beta}=A^{\alpha\beta}(x_1),\]
then for any $R_1\in (0,\infty)$ Assumption \ref{ASSB} holds for some $C_1=C_1(d,\lambda)$.
\item
There exists a number $\gamma\in (0,1)$, depending on $d$ and $\lambda$, such that if 
\[\sup_{x\in \bR^d}\sup_{r\le R_1}\fint_{B_r(x)}\bigg|A^{\alpha\beta}(y_1,y')-\fint_{B_r'(x')}A^{\alpha\beta}(y_1,z')\,dz'\bigg|\,dy\le \gamma,\]
for some $R_1\in (0,\infty)$,
then Assumption \ref{ASSB} holds for some $C_1=C_1(d,\lambda)$.
\end{enumerate}
\end{theorem}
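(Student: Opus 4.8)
The plan is to reduce to a unit-scale Dirichlet problem, establish a base energy estimate, and then bootstrap the integrability of $(Du,p)$ via the boundary $W^1_p$-solvability theory for the Stokes system with these coefficients (as developed in \cite{arXiv:1503.07290v3, arXiv:1604.02690v2, arXiv:1702.07045v1}) until $u$ lands in a $W^1_q$ with $q>d$; this is the boundary counterpart of the mechanism behind Theorem \ref{MRD}. After a translation and the rescaling $x\mapsto R^{-1}x$ (which preserves \eqref{161123@EQ1} with the same $\lambda$ and keeps the coefficients in the given class --- a function of $x_1$ in case (a), and in case (b) with unchanged partial-$\BMO$ modulus at scales $\le R^{-1}R_1$, since that modulus acts only tangentially and is scale invariant), it suffices to prove
\[
\|u\|_{L_\infty(B_{1/2}^+)}\lesssim_{d,\lambda}\Big(\int_{B_1^+}|u|^2\,dx\Big)^{1/2}
\]
for every weak solution $(u,p)\in W^1_2(B_1^+)^d\times L_2(B_1^+)$ of $\cL u+\nabla p=0$, $\divg u=0$ in $B_1^+$ with $u=0$ on $\partial\bR^d_+\cap B_1$ (and likewise with $\cL$ replaced by $\cL^*$, whose coefficients $(A^{\beta\alpha})^{\operatorname{tr}}$ lie in the same class). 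Since the pressure enters only through its gradient, we normalize its mean on whichever half-ball is under consideration.

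\textbf{Base estimate.} Testing the equation with $\zeta^2u$ for a cutoff $\zeta$, using $\divg u=0$ and $u|_{\partial\bR^d_+\cap B_1}=0$ to rewrite the pressure term as $\int 2\zeta\,(p-(p)_{B^+})\,\nabla\zeta\cdot u$, and bounding $\|p-(p)_{B^+}\|_{L_2}\lesssim\|Du\|_{L_2}$ by the Ne\v{c}as inequality on the half-ball, a standard hole-filling iteration gives the Caccioppoli bound $\|Du\|_{L_2(B_{3/4}^+)}+\|p-(p)_{B_{3/4}^+}\|_{L_2(B_{3/4}^+)}\lesssim_{d,\lambda}\|u\|_{L_2(B_1^+)}$; extending $u$ by zero across the flat face and applying the Sobolev inequality also yields $\|u\|_{L_{2^*}(B_{3/4}^+)}\lesssim_{d,\lambda}\|u\|_{L_2(B_1^+)}$, $2^*=2d/(d-2)$.

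\textbf{Bootstrap.} Along the $d$-determined exponents $q_0=2$, $1/q_{k+1}=1/q_k-1/d$ (chosen to skirt the harmless borderline $q=d$), which strictly increase and exceed $d$ after finitely many steps, one runs the following step on a nested family $B_{\rho_{k+1}}^+\subset B_{\rho_k}^+\subset B_{3/4}^+$: given $Du,p\in L_{q_k}(B_{\rho_k}^+)$ with norms $\lesssim\|u\|_{L_2(B_1^+)}$, the Sobolev inequality upgrades $u$ to $L_{q_k^*}=L_{q_{k+1}}$; cutting off by $\zeta\equiv1$ on $B_{\rho_{k+1}}$, $\operatorname{supp}\zeta\subset B_{\rho_k}$, the pair $(\zeta u,\zeta p)$ solves, after extension by zero, a Stokes system on $\bR^d_+$ with zero Dirichlet data whose forcing is $\nabla\zeta\cdot u\in L_{q_{k+1}}$ in the divergence slot and, in the interior slot, $p\,\nabla\zeta-(D_\alpha\zeta)A^{\alpha\beta}D_\beta u\in L_{q_k}$ (plus the divergence-form term $-A^{\alpha\beta}(D_\beta\zeta)u\in L_{q_{k+1}}$). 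Because $(q_{k+1})_*=q_k$, a compactly supported function in $L_{q_k}$ lies in $W^{-1}_{q_{k+1}}(\bR^d_+)$ with comparable norm, so the boundary $W^1_{q_{k+1}}$-solvability estimate for the Stokes system with these coefficients --- valid for every exponent and all scales in case (a), and for the finitely many exponents of the chain once $\gamma=\gamma(d,\lambda)$ is small enough in case (b), applied locally so that the smallness at scales $\lesssim1$ inherited from the hypothesis suffices, and whose remaining lower-order terms involve only norms of $(u,p)$ controlled by the base estimate --- yields $Du,p\in L_{q_{k+1}}(B_{\rho_{k+1}}^+)$ with norms $\lesssim\|u\|_{L_2(B_1^+)}$. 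After finitely many steps $u\in W^1_q(B_r^+)$ for some $q>d$ and $r>1/2$, and Morrey's embedding gives $\|u\|_{L_\infty(B_{1/2}^+)}\le\|u\|_{L_\infty(B_r^+)}\lesssim\|u\|_{W^1_q(B_r^+)}\lesssim_{d,\lambda}\|u\|_{L_2(B_1^+)}$, i.e.\ \eqref{161019@aa1} with $C_1=C_1(d,\lambda)$; the identical argument applies to $\cL^*$.

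\textbf{Main obstacle.} The delicate point is the bookkeeping of Sobolev exponents combined with the correct invocation of the boundary Stokes $W^1_p$-theory: the forcing produced by the cutoff carries $Du$ and $p$ only in the current space $L_{q_k}$ and does not improve, so the iteration closes solely because $(q_{k+1})_*=q_k$ places that forcing exactly in the dual-Sobolev space $W^{-1}_{q_{k+1}}$; moreover one must use the boundary estimate in a form whose lower-order terms are dominated by the base estimate, so that the homogeneous Dirichlet condition genuinely eliminates the pressure from the final bound --- otherwise the pressure, known at each stage only in $L_{q_k}$, would block the passage to $L_{q_{k+1}}$. In case (b) one must also verify that a single threshold $\gamma=\gamma(d,\lambda)$ serves all exponents of the finite chain.
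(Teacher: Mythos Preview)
Your approach is correct but takes a genuinely different route from the paper's. The paper argues via a Campanato-type perturbation scheme mirroring the proof of Theorem~\ref{MRD}: on each half-ball it freezes the tangential variables to obtain an operator $\cL_0$ with $x_1$-only coefficients, for which it first proves the boundary Lipschitz estimate $\|Du\|_{L_\infty(B_r^+)}\lesssim(R-r)^{-d/2}\|Du\|_{L_2(B_R^+)}$ (Lemma~\ref{161125@lem1}); the error term $u_1$ solving $\cL_0 u_1+\nabla p_1=(\cL_0-\cL)u$ is then controlled via the boundary reverse-H\"older inequality (Lemma~\ref{161127@lem1}) and the smallness of $\gamma$, and iteration of the resulting decay $\|Du\|_{L_2(B_\rho^+)}\lesssim((\rho/r)^{d/2}+\gamma)\|Du\|_{L_2(B_r^+)}$ yields Morrey--Campanato regularity and hence \eqref{161019@aa1}. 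Your route instead consumes the $W^1_q$-solvability machinery of \cite{arXiv:1604.02690v2,arXiv:1702.07045v1} as a black box and bootstraps integrability in finitely many $d$-determined steps until Morrey's embedding applies. The paper's argument is more self-contained and in fact delivers the stronger H\"older estimate of Assumption~\ref{ASSC} for free (as the remark following that assumption notes), while yours is quicker to write once those references are granted but yields only the $L_\infty$ bound.

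One point you should make explicit: when you extend $(\zeta u,\zeta p)$ by zero and invoke the half-space $W^1_{q_{k+1}}$-theory, the global solvability statements in the cited references require the partial-BMO smallness at \emph{all} scales (or up to a fixed $R_0$ on which the constants depend), whereas after your rescaling you control it only at scales $\lesssim 1$. You must either modify the coefficients outside $\operatorname{supp}\zeta$ (harmless, since the solution vanishes there) so the condition holds globally, or cite the $W^1_q$-estimate in its local form and absorb the resulting lower-order term via the base estimate; the phrase ``applied locally so that the smallness at scales $\lesssim 1$ suffices'' leaves this underspecified. You should also state explicitly the uniqueness argument identifying your known $W^1_2\times L_2$ solution $(\zeta u,\zeta p)$ with the $W^1_{q_{k+1}}\times L_{q_{k+1}}$ solution produced by the solvability theorem.
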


The following assumption is used to obtain a better estimate for the Green function near the boundary.

\begin{assumption}		\label{ASSC}
There exist positive real numbers $R_2$, $C_2$, and $\alpha_2<1$ such that if  $(u,p)\in W^1_2(\bR^d_+\cap B_R(x^0))^d\times L_2(\bR^d_+\cap B_R(x^0))$ satisfies, in the sense of distributions,
\begin{equation}		\label{170311@eq1}
\left\{
\begin{aligned}
&\cL u+\nabla p =0, \quad \divg u=0 \quad \text{in }\, \bR^d_+\cap B_R(x^0),\\
&u=0 \quad \text{on }\, \partial \bR^d_+\cap  B_R(x^0),
\end{aligned}
\right.
\end{equation}
for some $x^0\in \bR^d_+$ and $0<R\le R_2$, then 
\begin{equation*}		
\big[u\chi_{\bR^d_+\cap B_{R}(x^0)}\big]_{C^{\alpha_2}(B_{R/2}(x^0))}\le C_2R^{-\alpha_2}\left(\fint_{\bR^d_+\cap B_R(x^0)}|u|^2\,dx\right)^{1/2}.
\end{equation*}
The same estimate holds true when $\cL$ is replaced by $\cL^*$.
\end{assumption}

\begin{remark}
It will be clear from the proof of Theorem \ref{MRF} that Assumption \ref{ASSC} holds under the hypothesis in $(a)$ or $(b)$ of Theorem \ref{MRF}. 
\end{remark}

We observe that  Assumption \ref{ASSC} implies Assumptions \ref{ASSA} and \ref{ASSB}.
By Theorem \ref{MRE}, under Assumption \ref{ASSC}, there exists the Green function $(V(x,y),\Pi(x,y)$ for the Stokes problem satisfying the pointwise estimate \eqref{161019@eq5} in Theorem \ref{MRE}.
The following theorem shows that a better estimate for $V(x,y)$  is available near the boundary $\partial \bR^d_+$. 
We denote $d_x=\operatorname{dist}(x,\partial \bR_+^d)$ for $x\in \bR_+^d$.

\begin{theorem}		\label{MRG}
Suppose that  Assumption \ref{ASSC} holds.
Let $(V(x,y),\Pi(x,y))$ be the Green function constructed in Theorem \ref{MRB}.
Then for any $x,\, y\in \bR^d_+$ with $x\neq y$,
\begin{equation}		\label{170304@eq1}
|V(x,y)|\le C\min\{d_x,|x-y|,R_2\}^{\alpha_2}\min\{d_y,|x-y|,R_2\}^{\alpha_2} \min\{|x-y|,R_2\}^{2-d-2\alpha_2},
\end{equation}
where $C=C(d,\lambda,C_2,\alpha_2)$.
\end{theorem}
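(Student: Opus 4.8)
\textbf{Proof proposal for Theorem \ref{MRG}.}

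The plan is to exploit the boundary H\"older estimate in Assumption \ref{ASSC} together with the symmetry relation $V(x,y) = {}^*V(y,x)^{\operatorname{tr}}$ from Corollary \ref{161122@cor1}, in the same spirit as the interior bound \eqref{160822_eq1} was upgraded from the energy estimates in Corollary \ref{MRC}. The starting point is the observation that, for fixed $y$, the function $V(\cdot,y)$ (suitably cut off away from the pole) solves the homogeneous Stokes system $\cL V(\cdot,y) + \nabla\Pi(\cdot,y) = 0$, $\divg V(\cdot,y) = 0$ in $\bR^d_+ \setminus \{y\}$ with zero boundary values on $\partial\bR^d_+$. Fix $x,y \in \bR^d_+$, set $\rho = |x-y|$, and first treat the generic case $d_x \ge \rho/4$ (so $x$ is comparatively far from the boundary relative to its distance from $y$); here the claimed bound reduces, after noting $\min\{d_x,\rho,R_2\}^{\alpha_2}\min\{|x-y|,R_2\}^{-\alpha_2} \gtrsim \min\{d_y,\rho,R_2\}^{\alpha_2}\cdot(\text{stuff})$, to the pointwise estimate already available from Theorem \ref{MRB} / \ref{MRE}. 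So the real content is the regime $d_x < \rho/4$, where we must gain the extra factor $(d_x/\rho)^{\alpha_2}$ (and symmetrically $(d_y/\rho)^{\alpha_2}$).

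First I would handle the gain in the $x$-variable. Let $x^0 \in \partial\bR^d_+$ be the boundary point nearest to $x$, so $|x - x^0| = d_x$. On the half-ball $\bR^d_+ \cap B_R(x^0)$ with $R = \min\{\rho/2, R_2\}$, the pair $(V(\cdot,y),\Pi(\cdot,y))$ satisfies \eqref{170311@eq1} (the pole $y$ lies outside this half-ball since $|y - x^0| \ge |x-y| - d_x \ge 3\rho/4 > R$). Applying Assumption \ref{ASSC} and using $V(x^0,y) = 0$ gives
\[
|V(x,y)| = |V(x,y) - V(x^0,y)| \le [V(\cdot,y)\chi_{\bR^d_+\cap B_R(x^0)}]_{C^{\alpha_2}} \, d_x^{\alpha_2} \lesssim C_2 R^{-\alpha_2} d_x^{\alpha_2}\left(\fint_{\bR^d_+\cap B_R(x^0)} |V(\cdot,y)|^2\right)^{1/2}.
\]
The averaged $L_2$ norm of $V(\cdot,y)$ over $\bR^d_+ \cap B_R(x^0) \subset B_{2R}(y) \setminus \{y\}$ (roughly at distance $\sim\rho$ from $y$) is controlled by $\sup_{\bR^d_+\cap B_R(x^0)}|V(\cdot,y)|$, which by the already-established pointwise bound \eqref{161019@eq5} of Theorem \ref{MRE} is $\lesssim \rho^{2-d}$; more carefully one should use a Caccioppoli/energy estimate on a slightly larger region together with Corollary \ref{MRC}. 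This yields $|V(x,y)| \lesssim R^{-\alpha_2} d_x^{\alpha_2} \rho^{2-d}$, which is exactly $\min\{d_x,\rho,R_2\}^{\alpha_2}\min\{\rho,R_2\}^{-\alpha_2}\rho^{2-d}$ up to constants — the desired gain in $x$, but still missing the gain in $y$.

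To insert the symmetric gain in the $y$-variable, I would use Corollary \ref{161122@cor1}: $V(x,y) = {}^*V(y,x)^{\operatorname{tr}}$, where ${}^*V$ is the Green function of the adjoint system, which also satisfies Assumption \ref{ASSC} (the assumption is stated for both $\cL$ and $\cL^*$). Repeating the boundary-H\"older argument above with the roles of the two variables swapped — fixing the first slot and letting the second approach its nearest boundary point $y^0$, using ${}^*V(x,y^0) = 0$ — but now starting from the \emph{already improved} bound $|{}^*V(y,x)| \lesssim \min\{d_y,\rho,R_2\}^{\alpha_2}\min\{\rho,R_2\}^{-\alpha_2}\rho^{2-d}$ in place of the crude pointwise bound, produces the second factor $(d_x/\rho)^{\alpha_2}$ and hence the full estimate \eqref{170304@eq1}. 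Finally I would patch together the various ranges: the case $\rho \ge R_2$ (where all the $\min$'s with $R_2$ saturate and the bound degenerates to the pointwise bound at scale $R_2$, obtained by covering), the case where $d_x$ or $d_y$ is comparable to $\rho$ (reducing to Theorem \ref{MRE}), and the genuinely-near-boundary case treated above; a short real-variable bookkeeping argument shows the stated formula dominates all of these. The main obstacle I anticipate is the bootstrap bookkeeping in the two-variable gain: one has to be careful that when improving the $y$-dependence the half-ball $\bR^d_+\cap B_{R'}(y^0)$ on which Assumption \ref{ASSC} is applied genuinely avoids the pole $x$ and that the $L_2$-average of ${}^*V(\cdot,x)$ over it is controlled by the \emph{improved} (not the crude) $x$-side bound uniformly — i.e. one needs the improved bound to already hold on a whole neighborhood, not just at a point, which forces running the $x$-side argument first as a genuine sup-bound on half-balls rather than a single pointwise value. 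Handling the overlap of the $R_2$-truncations and the $d_x,d_y$ versus $\rho$ dichotomies cleanly, so that no spurious loss of $\alpha_2$ powers occurs, is the other delicate point.
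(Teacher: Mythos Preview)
Your approach is essentially the paper's: gain the factor $(d_x/R)^{\alpha_2}$ by applying Assumption~\ref{ASSC} to $V(\cdot,y)$ on a half-ball avoiding the pole, then bootstrap through the symmetry $V(x,y)={}^*V(y,x)^{\operatorname{tr}}$, feeding the one-sided improvement back into the $L_2$ average to pick up $(d_y/R)^{\alpha_2}$, and finally patch the range $|x-y|\gtrsim R_2$. The only implementation difference is that the paper controls the H\"older seminorm through $R^{1-d/2-\alpha_2}\|DV(\cdot,y)\|_{L_2(\Omega_R(x))}$ (Poincar\'e, then the gradient energy estimate $i)$ of Theorem~\ref{MRE}) rather than via the pointwise bound on $V$, and in the bootstrap it is the $x$-side gain --- applied uniformly in $z\in\Omega_{2R}(y)$ --- that bounds $\|{}^*V(\cdot,x)\|_{L_2}$ near $y$, exactly as you identify in your final paragraph (your mid-paragraph display of the ``already improved bound'' has the roles of $d_x$ and $d_y$ swapped).
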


In a bounded Lipschitz domain, the estimate \eqref{170304@eq1} of the Green function for the classical Stokes system with the Laplace operator was proved by Chang-Choe \cite{MR2465713} and Kang-Kim \cite{MR2718661}.
In particular, \cite{MR2718661} dealt with the estimate \eqref{170304@eq1} of the Green functions for elliptic systems with irregular coefficients.

%=========================================================
\section{Auxiliary lemmas}
\label{S4}
%=========================================================

In this section, we review the existence of solutions to the divergence equation.
We also gather some auxiliary lemmas about unique solvability results, pressure estimates, and gradient estimates for the Stokes system with measurable coefficients in the whole space and half space.

\begin{lemma}		\label{161121@lem3}
Let $1<q<\infty$.
\begin{enumerate}[$(a)$]
\item
Let $\Omega$ be a bounded Lipschitz domain in $\bR^d$.
Then for any  $g\in L_q(\Omega)$ satisfying $(g)_\Omega=0$, there exists $u\in \mathring{W}^1_q(\Omega)^d$ such that 
\begin{equation*}		
\divg u=g \ \text{ in }\ \Omega, \quad \|Du\|_{L_q(\Omega)} \lesssim_{d,q,\Lip(\Omega)} \|g\|_{L_q(\Omega)}
\end{equation*}
where $\Lip(\Omega)$ denotes the Lipschitz constant of $\Omega$.
\item
Let $\Omega=B_R$.
Then for any  $g\in L_q(\Omega)$ satisfying $(g)_\Omega=0$, there exists $u\in \mathring{W}^1_q(\Omega)^d$ such that 
\[\divg u=g \ \text{ in }\ \Omega, \quad \|Du\|_{L_q(\Omega)}\lesssim_{d,q} \|g\|_{L_q(\Omega)}.\]
This remains true when $B_R$ is replaced by $B_R^+$, $B_R\setminus \overline{B_{R/2}}$, or $B_R^+\setminus \overline{B_{R/2}^+}$.
\end{enumerate}
\end{lemma}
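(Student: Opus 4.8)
The plan is to reduce everything to the classical Bogovskii construction. For part $(a)$, the statement that $\divg u = g$ has a solution $u \in \mathring{W}^1_q(\Omega)^d$ with $\|Du\|_{L_q(\Omega)} \lesssim_{d,q,\Lip(\Omega)} \|g\|_{L_q(\Omega)}$ whenever $g \in L_q(\Omega)$ with $(g)_\Omega = 0$ is precisely the classical result of Bogovskii, valid on any bounded Lipschitz domain (or, more generally, any bounded domain star-shaped with respect to a ball, and then patched together via a finite cover for a general Lipschitz domain). I would simply cite this — e.g. from Galdi's monograph \cite{MR2808162} or \cite{MR1461542} — rather than reproduce the proof, noting that the implied constant depends only on $d$, $q$, and the Lipschitz character of $\Omega$. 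The key structural fact one needs downstream is that the operator $g \mapsto u$ is linear and bounded $L_q(\Omega) \to \mathring{W}^1_q(\Omega)^d$ with the stated quantitative dependence, and this is exactly what Bogovskii's integral operator provides.

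For part $(b)$, the ball $B_R$ is star-shaped with respect to a ball (itself), so the same Bogovskii construction applies directly; the only point worth making is the scaling. By the dilation $x \mapsto Rx$ one reduces to $R = 1$: if $\tilde g(x) = g(Rx)$ on $B_1$ then $(\tilde g)_{B_1} = 0$, so there is $\tilde u \in \mathring{W}^1_q(B_1)^d$ with $\divg \tilde u = \tilde g$ and $\|D\tilde u\|_{L_q(B_1)} \lesssim_{d,q} \|\tilde g\|_{L_q(B_1)}$; then $u(x) := R\,\tilde u(x/R)$ solves $\divg u = g$ on $B_R$, and a change of variables shows $\|Du\|_{L_q(B_R)} = \|D\tilde u\|_{L_q(B_1)} \cdot R^{d/q}$ while $\|g\|_{L_q(B_R)} = \|\tilde g\|_{L_q(B_1)} \cdot R^{d/q}$, so the constant is genuinely $R$-independent. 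This scaling remark is the one place where some care is warranted, since the whole point of isolating this lemma is to track constants uniformly in $R$ for the later Green-function estimates.

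It remains to handle the three other shapes: the half-ball $B_R^+$, the annulus $B_R \setminus \overline{B_{R/2}}$, and the half-annulus $B_R^+ \setminus \overline{B_{R/2}^+}$. Each of these is a bounded Lipschitz domain, so part $(a)$ already applies; the substance of the claim is again that the constant can be taken independent of $R$. This follows from the same scaling argument as above, because each of these domains is a dilate (by $R$) of a fixed reference domain ($B_1^+$, $B_1 \setminus \overline{B_{1/2}}$, or $B_1^+ \setminus \overline{B_{1/2}^+}$), and the Lipschitz character is dilation-invariant. (Alternatively, for the annular domains one can note they are star-shaped with respect to a small ball in the "thick" part and invoke Bogovskii directly there too.) I anticipate no real obstacle here — the lemma is essentially a bookkeeping package assembling known facts — and the mild subtlety is just to state the scaling carefully enough that the $R$-uniformity of the implied constants is transparent, since that uniformity is what is exploited repeatedly in Sections \ref{S5}--\ref{S10}.
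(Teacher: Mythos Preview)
Your approach matches the paper's exactly: the paper's proof is two sentences, citing a reference (in their case \cite{MR2101215}) for part $(a)$ and stating that $(b)$ follows from $(a)$ by scaling. Your write-up simply makes the scaling explicit, which is fine. One small correction: your parenthetical remark that the annular domains $B_1 \setminus \overline{B_{1/2}}$ and $B_1^+ \setminus \overline{B_{1/2}^+}$ are star-shaped with respect to a ball is false --- an annulus is not star-shaped with respect to any point, since segments joining antipodal points cross the inner hole --- so drop that alternative and keep only the scaling-from-$(a)$ argument, which is all the paper uses and all you need.
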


\begin{proof}
For the proof of $(a)$ we refer to \cite{MR2101215}.
Using $(a)$ and scaling, one can show  $(b)$.
\end{proof}

The problem of the existence of solutions to the divergence equation in various domains $\Omega$ has been studied by many authors upon the regularity assumptions made on $\Omega$ and the construction methods of solutions $u$.
We note that the existence of solutions to the divergence equation in the whole space and half space can be deduced from Lemma \ref{161121@lem3} with scaling; see also \cite[p. 261, Corollary IV.3.1]{MR2808162}.
For the half space case, there is a method based on some explicit representation formula, wihch was studied in detail by Cattabriga \cite{Cattabriga} and Solonnikov \cite{Solonnikov}.

\begin{lemma}		\label{160810@lem1}
Let $\Omega=\bR^d$ or $\bR^d_+$.
If $1<q<d$ and $g\in L_q(\Omega)$, 
then there exists $u\in \rY^1_q(\Omega)^d$ such that 
\begin{equation*}		
\divg u=g  \ \text{ in }\ \Omega, \quad \|Du\|_{L_q(\Omega)}\lesssim_{d,q}\|g\|_{L_q(\Omega)}.
\end{equation*} 
\end{lemma}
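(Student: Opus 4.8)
The plan is to deduce the whole-space and half-space divergence equation from the bounded-domain result Lemma \ref{161121@lem3}$(b)$ by an exhaustion-and-scaling argument, with a separate treatment of the issue that on $B_R$ one must subtract the mean $(g)_{B_R}$. First I would fix $g \in L_q(\Omega)$ with $1 < q < d$, and for each $R > 0$ set $g_R = g \chi_{B_R \cap \Omega}$. On the ball $B_R$ (resp.\ half-ball $B_R^+$) the function $g_R$ does not integrate to zero, so I would write $g_R = (g_R - c_R \psi_R) + c_R \psi_R$, where $\psi_R$ is a smooth bump supported in $B_{R/2}\cap\Omega$ with $\int \psi_R = 1$ and $c_R = \int_{B_R\cap\Omega} g\,dx$; the first summand has mean zero and is handled by Lemma \ref{161121@lem3}$(b)$, and $c_R\psi_R$ is handled by hand (explicitly, one can write $\psi_R$ as a divergence of a smooth compactly supported field, or absorb it using a fixed reference solution on $B_1$ rescaled). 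This yields $u_R \in \mathring W^1_q(B_R\cap\Omega)^d$ with $\divg u_R = g_R$ and $\|Du_R\|_{L_q(B_R\cap\Omega)} \lesssim_{d,q} \|g_R\|_{L_q(B_R\cap\Omega)} + |c_R|$, where the constant is $R$-independent by the scaling in Lemma \ref{161121@lem3}$(b)$, provided the bump contribution is also scale-controlled; note $|c_R| \le \|g\|_{L_q}|B_R\cap\Omega|^{1-1/q} \to \infty$, so a crude bump estimate is not uniform and some care is needed — see below.

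The cleaner route, which I would actually pursue, avoids the mean-zero correction: extend each $u_R$ by zero to all of $\Omega$ (legitimate since $u_R \in \mathring W^1_q$), use the Sobolev inequality $\|u_R\|_{L_{dq/(d-q)}(\Omega)} \lesssim_{d,q} \|Du_R\|_{L_q(\Omega)}$ (valid here because $1<q<d$) to control the full $Y^1_q$ norm, and then pass to the limit $R \to \infty$ along a subsequence using weak compactness of $\rY^1_q(\Omega)$ (reflexive for $1<q<\infty$). The weak limit $u$ satisfies $\|Du\|_{L_q(\Omega)} \lesssim_{d,q}\|g\|_{L_q(\Omega)}$ by weak lower semicontinuity of the norm, and $\divg u = g$ in $\mathcal D'(\Omega)$ because $\divg u_R = g$ on any fixed compact set once $R$ is large, and $\divg$ is weakly continuous. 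Finally $u \in \rY^1_q(\Omega)^d$ since it is a weak limit of elements of $\rY^1_q(\Omega)^d$, which is a closed (hence weakly closed, being convex) subspace.

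The main obstacle is obtaining the estimate $\|Du_R\|_{L_q} \lesssim \|g\|_{L_q}$ with a constant \emph{independent of $R$}, because the naive mean-zero correction introduces the factor $|c_R| = |\int_{B_R\cap\Omega} g|$ which blows up with $R$. I would resolve this by a Bogovskii-type construction adapted to an annular exhaustion: choose $u_R$ supported in the annulus $(B_R \setminus \overline{B_{R/2}})\cap\Omega$ to handle the ``mass defect'', using the annular version of Lemma \ref{161121@lem3}$(b)$ (explicitly stated there for $B_R\setminus\overline{B_{R/2}}$ and $B_R^+\setminus\overline{B_{R/2}^+}$), so that the corrector solves $\divg w_R = c_R \psi_R$ on an annulus of inner radius $R/2$; rescaling to the unit annulus shows $\|Dw_R\|_{L_q} \lesssim_{d,q} R^{-d/q'}|c_R| \lesssim_{d,q}\|g\|_{L_q(B_R)} \cdot R^{-d/q'}|B_R|^{1/q'} \lesssim_{d,q}\|g\|_{L_q(\Omega)}$, which is exactly uniform in $R$. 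With this uniform bound in place the weak-compactness passage to the limit goes through, and one can also invoke the alternative, purely citational route via \cite[p.~261, Corollary IV.3.1]{MR2808162} or the explicit representation formulas of Cattabriga \cite{Cattabriga} and Solonnikov \cite{Solonnikov} as a cross-check.
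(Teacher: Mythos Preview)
Your overall strategy matches the paper's outline exactly: the paper gives no detailed proof and simply says the result ``can be deduced from Lemma~\ref{161121@lem3} with scaling,'' citing Galdi and the Cattabriga--Solonnikov formulas as alternatives. Your exhaustion-plus-weak-compactness plan is the natural way to fill this in.

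However, the details of your mass-defect correction are wrong in two places. First, you suggest writing $\psi_R$ (a bump with $\int\psi_R=1$) as the divergence of a smooth compactly supported field; this is impossible precisely because $\int\psi_R\neq 0$. Second, and more seriously, in your final paragraph you propose solving $\divg w_R = c_R\psi_R$ on the annulus $B_R\setminus\overline{B_{R/2}}$ via the annular case of Lemma~\ref{161121@lem3}$(b)$. That lemma requires the datum to have mean zero on the annulus, and $\int_{B_R\setminus B_{R/2}} c_R\psi_R = c_R\neq 0$ in general, so no such $w_R\in\mathring W^1_q$ exists and your scaling calculation is vacuous.

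The simplest fix is to bypass the bump entirely: on $B_R$ (or $B_R^+$) solve $\divg u_R = g\chi_{B_R} - (g\chi_{B_R})_{B_R}$, which has mean zero by construction. Since $\|(g\chi_{B_R})_{B_R}\|_{L_q(B_R)} = |(g)_{B_R}|\,|B_R|^{1/q}\le \|g\|_{L_q(B_R)}$ by H\"older, the estimate $\|Du_R\|_{L_q}\lesssim_{d,q}\|g\|_{L_q}$ is already uniform in $R$---your worry about $|c_R|\to\infty$ is misplaced because only $|c_R|/|B_R|$ enters. One then checks $(g\chi_{B_R})_{B_R}\,\chi_{B_R}\to 0$ in $L_q$ (split $g$ into near and far parts) and passes to the weak limit as you describe. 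Alternatively, if you insist on a bump, place $\psi_R$ in the outer annulus, solve only for $g_R-c_R\psi_R$ on the full ball, and simply observe that $c_R\psi_R\to 0$ in $\mathcal D'$ because its support escapes to infinity; do not try to solve for it separately.
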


\begin{lemma}		\label{161121@lem1}
Let $\Omega=\bR^d$ or $\bR^d_+$.
Then for $f\in L_{2d/(d+2)}(\Omega)^d$, $f_\alpha\in L_2(\Omega)^d$, and $g\in L_2(\Omega)$, there exists a unique weak solution $(u,p)\in \rY^1_2(\Omega)^d\times L_2(\Omega)$ to the problem
\begin{equation*}		
\left\{
\begin{aligned}
\cL u+\nabla p=f+D_\alpha f_\alpha &\quad \text{in }\, \Omega,\\
\divg u=g &\quad \text{in }\, \Omega.
\end{aligned}
\right.
\end{equation*}
Moreover,
\begin{equation}		\label{160721@eq2}
\|Du\|_{L_2(\Omega)}+\|p\|_{L_2(\Omega)} \lesssim_{d,\lambda} \|f\|_{L_{2d/(d+2)}(\Omega)}+\|f_\alpha\|_{L_2(\Omega)}+\|g\|_{L_2(\Omega)}.
\end{equation}
\end{lemma}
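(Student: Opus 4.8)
The plan is to obtain existence and uniqueness from the Lax--Milgram lemma on the Hilbert space $\rY^1_2(\Omega)^d$, after eliminating the pressure by restricting to divergence-free test functions, and then to recover the pressure via a De Rham type argument together with the solvability of the divergence equation (Lemma \ref{160810@lem1}). First I would reduce to the case $g=0$: by Lemma \ref{160810@lem1} there is $w\in \rY^1_2(\Omega)^d$ with $\divg w=g$ and $\|Dw\|_{L_2(\Omega)}\lesssim_{d}\|g\|_{L_2(\Omega)}$, and replacing $u$ by $u-w$ changes $f+D_\alpha f_\alpha$ to $f+D_\alpha(f_\alpha-A^{\alpha\beta}D_\beta w)$, which is of the same form with $f_\alpha$ replaced by an $L_2$ field controlled by $\|g\|_{L_2}$ and $\|f_\alpha\|_{L_2}$ (using the boundedness \eqref{161123@EQ1}). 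So it suffices to solve with $\divg u=0$.

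Next let $H=\{\phi\in \rY^1_2(\Omega)^d:\divg\phi=0\}$, a closed subspace of the Hilbert space $\rY^1_2(\Omega)^d$ with inner product $\langle u,v\rangle=\int_\Omega D_\alpha u\cdot D_\alpha v\,dx$. The bilinear form $B(u,\phi)=\int_\Omega A^{\alpha\beta}D_\beta u\cdot D_\alpha\phi\,dx$ is bounded on $H\times H$ by $\lambda^{-1}$ and coercive by the first inequality in \eqref{161123@EQ1}, namely $B(u,u)\ge\lambda\|Du\|_{L_2(\Omega)}^2$. The linear functional $\phi\mapsto\int_\Omega f\cdot\phi\,dx-\int_\Omega f_\alpha\cdot D_\alpha\phi\,dx$ is bounded on $H$: the second term by $\|f_\alpha\|_{L_2}$, the first by Hölder and the Sobolev inequality $\|\phi\|_{L_{2d/(d-2)}(\Omega)}\lesssim_d\|D\phi\|_{L_2(\Omega)}$ valid on $\rY^1_2(\Omega)$, with $2d/(d-2)$ the conjugate exponent of $2d/(d+2)$. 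Lax--Milgram then produces a unique $u\in H$ with $B(u,\phi)=\int_\Omega f\cdot\phi-\int_\Omega f_\alpha\cdot D_\alpha\phi$ for all $\phi\in H$, and testing against $\phi=u$ gives $\|Du\|_{L_2(\Omega)}\lesssim_{d,\lambda}\|f\|_{L_{2d/(d+2)}(\Omega)}+\|f_\alpha\|_{L_2(\Omega)}$, hence the $u$-part of \eqref{160721@eq2} after undoing the reduction.

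It remains to construct $p\in L_2(\Omega)$ so that the identity in Definition \ref{D22} holds for \emph{all} $\phi\in\rY^1_2(\Omega)^d$, not just divergence-free ones, and to bound $\|p\|_{L_2}$. The functional $\Lambda(\phi)=B(u,\phi)-\int_\Omega f\cdot\phi+\int_\Omega f_\alpha\cdot D_\alpha\phi$ is bounded on $\rY^1_2(\Omega)^d$ and vanishes on $H$; the task is to write $\Lambda(\phi)=\int_\Omega p\,\divg\phi\,dx$. This is where I expect the main difficulty, since on an unbounded domain one cannot simply invoke a Bogovskii/De Rham statement on a bounded Lipschitz domain. I would argue locally: on each ball $B_R$ (or $B_R^+$), Lemma \ref{161121@lem3}(b) gives a bounded right inverse of $\divg$ from $\{h\in L_2:(h)_{B_R}=0\}$ into $\mathring{W}^1_2(B_R)^d\subset\rY^1_2(\Omega)^d$, so $\Lambda$ descends to a bounded functional on $L_2(B_R)/\text{constants}$, yielding $p_R\in L_2(B_R)$ with $(p_R)_{B_R}=0$ representing $\Lambda$ on compactly supported fields in $B_R$; these patch up to a single $p\in L_{2,\mathrm{loc}}(\Omega)$ (well-defined up to a global constant) with $\Lambda(\phi)=\int_\Omega p\,\divg\phi$ for all $\phi\in C_c^\infty(\Omega)^d$, hence for all $\phi\in\rY^1_2(\Omega)^d$ by density once we know $p\in L_2(\Omega)$. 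To get the global $L_2$ bound and fix the constant, I would use Lemma \ref{160810@lem1} once more: given any $h\in L_2(\Omega)$ with compact support and $\int_\Omega h=0$, pick $\psi\in\rY^1_2(\Omega)^d$ with $\divg\psi=h$ and $\|D\psi\|_{L_2}\lesssim_d\|h\|_{L_2}$, so that $\big|\int_\Omega p\,h\big|=|\Lambda(\psi)|\lesssim_{d,\lambda}(\|f\|_{L_{2d/(d+2)}}+\|f_\alpha\|_{L_2})\|h\|_{L_2}$; since such $h$ are dense in the mean-zero subspace and $\rY^1_2$ contains no nonzero constants, this forces $p\in L_2(\Omega)$ with the desired bound and pins down the constant. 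Finally, uniqueness: if $(u,p)$ solves the homogeneous problem, testing with divergence-free $\phi$ and coercivity give $Du=0$, so $u=0$ in $\rY^1_2(\Omega)$; then $\int_\Omega p\,\divg\phi=0$ for all $\phi$, and surjectivity of $\divg$ onto mean-zero $L_2$ fields (Lemma \ref{160810@lem1} / Lemma \ref{161121@lem3}) forces $p=0$.
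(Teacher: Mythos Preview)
Your proof is correct and follows essentially the same approach the paper indicates (Lax--Milgram on the divergence-free subspace, combined with Lemma~\ref{160810@lem1}). One simplification: the local-patching construction of $p$ is unnecessary, because Lemma~\ref{160810@lem1} already makes $\divg:\rY^1_2(\Omega)^d\to L_2(\Omega)$ surjective onto \emph{all} of $L_2(\Omega)$ (there is no mean-zero constraint on an unbounded domain), so the functional $\Lambda$ factors directly through $L_2(\Omega)$ and Riesz representation yields $p\in L_2(\Omega)$ with the stated bound in a single step.
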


\begin{proof}
The proof is based on Lemma \ref{160810@lem1} and the Lax-Milgram theorem.
We omit the proof because it is almost the same as that of \cite[Lemma 3.1]{arXiv:1503.07290v3}.
\end{proof}

\begin{lemma}		\label{160808@lem2}
Let $R>0$. If $(u,p)\in W^1_2(B_R)^d\times L_2(B_R)$ satisfies
\[\cL u+\nabla p=0 \quad \text{in }\, B_R,\]
then 
\[\int_{B_R}|p-(p)_{B_{R}}|^2\,dx \lesssim_{d,\lambda} \int_{B_R}|Du|^2\,dx.\]
The same estimate holds true if $B_R$ is replaced by $B_R^+$, $B_R\setminus \overline{B_{R/2}}$, or $B_R^+\setminus \overline{B_{R/2}^+}$.
\end{lemma}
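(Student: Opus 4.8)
The plan is to derive the estimate from the solvability of the divergence equation in the relevant domain. Fix $D$ to be one of $B_R$, $B_R^+$, $B_R\setminus\overline{B_{R/2}}$, or $B_R^+\setminus\overline{B_{R/2}^+}$; each of these is a bounded Lipschitz domain whose Lipschitz constant scales trivially in $R$, so that the implied constant in Lemma \ref{161121@lem3}$(b)$ does not depend on $R$. Set $\bar p = (p)_{D}$ and note that $p - \bar p\in L_2(D)$ with $(p-\bar p)_D = 0$. By Lemma \ref{161121@lem3}$(b)$, there exists $w\in\mathring W^1_2(D)^d$ with $\divg w = p - \bar p$ in $D$ and
\[
\|Dw\|_{L_2(D)} \lesssim_{d} \|p-\bar p\|_{L_2(D)}.
\]

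Next I would test the equation $\cL u + \nabla p = 0$ against $w$. Since $w\in\mathring W^1_2(D)^d$, it is an admissible test function, and because $(\nabla\bar p, w) = -(\bar p,\divg w)$ integrates against the constant $\bar p$ in a way that is absorbed below, the weak formulation gives
\[
\int_{D} A^{\alpha\beta}D_\beta u\cdot D_\alpha w\,dx = \int_{D} p\,\divg w\,dx = \int_{D} (p-\bar p)\,\divg w\,dx + \bar p\int_D \divg w\,dx.
\]
The last integral vanishes because $w$ has zero trace (divergence theorem), so the right-hand side equals $\int_D |p-\bar p|^2\,dx = \|p-\bar p\|_{L_2(D)}^2$. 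On the left-hand side, the boundedness half of the ellipticity condition \eqref{161123@EQ1} yields
\[
\left|\int_{D} A^{\alpha\beta}D_\beta u\cdot D_\alpha w\,dx\right| \le \lambda^{-1}\|Du\|_{L_2(D)}\|Dw\|_{L_2(D)}.
\]
Combining the three displays,
\[
\|p-\bar p\|_{L_2(D)}^2 \lesssim_{d,\lambda} \|Du\|_{L_2(D)}\,\|p-\bar p\|_{L_2(D)},
\]
and dividing by $\|p-\bar p\|_{L_2(D)}$ (the estimate being trivial when this vanishes) gives the claim.

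The main point to be careful about is the scale invariance of the constant from Lemma \ref{161121@lem3}$(b)$: for the annular domains one must check that the Bogovskii-type operator has an $R$-independent norm, which follows by the usual rescaling $x\mapsto x/R$ since these domains are dilations of fixed reference domains. A second, minor point is justifying that $w$ is a legitimate test function in the weak formulation on $D$; this is immediate because $\mathring W^1_2(D)^d$ functions extend by zero to test functions supported in $D$, and the interior equation $\cL u + \nabla p = 0$ in $B_R$ (or the corresponding larger ball/half-ball) holds in particular on $D$. No other obstacle is anticipated; the argument is the standard "duality with the divergence equation" device, and the only reason it is stated separately here is to record the precise scaling of the constants for later use.
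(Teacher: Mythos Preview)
Your proof is correct and follows essentially the same route as the paper: solve the divergence equation $\divg w = p-(p)_D$ in $D$ via Lemma~\ref{161121@lem3}$(b)$, test the equation against $w$, and conclude using the boundedness part of \eqref{161123@EQ1}. The paper's sketch is terser, but your additional remarks on the scale-invariance of the Bogovski\u{\i} constant and on the admissibility of $w$ as a test function are exactly the points one needs to make the argument complete.
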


\begin{proof}
The proof is almost the same as the classical case.
For reader's conveneicne we sketch the proof. 
From the solvability of the divergence equation, there exists $\phi\in \mathring{W}^1_2(B_R)^d$ such that 
\[\divg \phi=p-(p)_{B_R}  \ \text{ in }\ B_R, \quad \|D\phi\|_{L_2(B_R)} \lesssim_d \|p-(p)_{B_R}\|_{L_{2}(B_R)}.\]
Using $\phi$ as a test function we obtain 
\[\|p-(p)_{B_R}\|_{L_{2}(B_R)}^2 = \int (p-(p)_{B_R}) \divg \phi = \int \cL u \cdot \phi.\]
The result follows from the strong ellipticity condition with the Cauchy inequality.
\end{proof}

\begin{lemma}		\label{160920@lem2}
Let $R>0$.
\begin{enumerate}[$(a)$]
\item
If $(u,p)\in W^1_2(B_R)^d\times L_2(B_R)$ satisfies the system
\[\left\{
\begin{aligned}
\cL u+\nabla p=0 &\quad \text{in }\, B_R,\\
\divg u=0 &\quad \text{in }\, B_R,
\end{aligned}
\right.\]
then we have
\[\int_{B_{R/2}}|Du|^2\,dx \lesssim_{d,\lambda} R^{-2}\int_{B_R}|u|^2\,dx.\]
The statement remains true, provided that $B_R$ and $B_{R/2}$ are replaced by $B_{5R/4}\setminus \overline{B_{R/4}}$ and $B_R\setminus \overline{B_{R/2}}$, respectively.

\item
If $(u, p)\in W^1_2(B_R^+)^d\times L_2(B_R^+)$ satisfies the system 
\[\left\{
\begin{aligned}
\cL u+\nabla p=0 &\quad \text{in }\, B_R^+,\\
\divg u=0 &\quad \text{in }\, B_R^+,\\
u=0 &\quad \text{on }\,  B_R \cap \partial \bR^d_+,
\end{aligned}
\right.\]
then we have 
\[
\int_{B_{R/2}^+}|Du|^2\,dx \lesssim_{d,\lambda} R^{-2}\int_{B_{R}^+}|u|^2\,dx.
\]
The statement remains true, provided that $B_R^+$, $B_{R/2}^+$, and $B_R$ are replaced by $B_{5R/4}^+\setminus \overline{B_{R/4}}$, $B_R^+\setminus \overline{B_{R/2}}$,  and $B_{5R/4}\setminus \overline{B_{R/4}}$, respectively.

\end{enumerate}
\end{lemma}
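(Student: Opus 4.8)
The plan is to prove the interior estimate in part $(a)$ by the standard Caccioppoli argument, adapted to accommodate the pressure term, and then indicate the modifications needed for the annular regions and the boundary case $(b)$. Fix a smooth cutoff $\zeta\in C_c^\infty(B_R)$ with $\zeta\equiv 1$ on $B_{R/2}$, $0\le\zeta\le 1$, and $|D\zeta|\lesssim R^{-1}$. The natural test function is $\phi=\zeta^2(u-c)$ for a suitable constant vector $c=(u)_{B_R}$; note $\phi\in \mathring W^1_2(B_R)^d$. Testing the equation $\cL u+\nabla p=0$ against $\phi$ and using $\divg u=0$ gives, after moving the terms with $D\zeta$ to the right-hand side,
\[
\int_{B_R}\zeta^2 A^{\alpha\beta}D_\beta u\cdot D_\alpha u\,dx
= -2\int_{B_R}\zeta\, A^{\alpha\beta}D_\beta u\cdot D_\alpha\zeta\,(u-c)\,dx
+\int_{B_R}p\,\divg\phi\,dx.
\]
The ellipticity \eqref{161123@EQ1} bounds the left side from below by $\lambda\int \zeta^2|Du|^2$, and the first right-hand term is handled by Young's inequality, absorbing $\varepsilon\int\zeta^2|Du|^2$ on the left and leaving $C_\varepsilon R^{-2}\int_{B_R}|u-c|^2$.

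The main obstacle is the pressure term $\int p\,\divg\phi\,dx$, since $\divg\phi=2\zeta D_\alpha\zeta\,(u-c)^\alpha+\zeta^2\divg u=2\zeta\,D\zeta\cdot(u-c)$ (the second piece vanishes by $\divg u=0$), so this term is only $\int_{B_R}2p\,\zeta\,D\zeta\cdot(u-c)\,dx$, which involves $p$ itself rather than its gradient. We cannot control $\|p\|_{L_2(B_R)}$ directly, but we can control the oscillation $\|p-(p)_{B_R}\|_{L_2(B_R)}$ by Lemma \ref{160808@lem2}, and since $D\zeta\cdot(u-c)$ integrates against a constant to give $\int_{B_R}(p)_{B_R}\,\zeta D\zeta\cdot(u-c)\,dx$, which is itself bounded by $C R^{-1}\|u-c\|_{L_1}$ times $|(p)_{B_R}|$; the cleaner route is to observe $\int_{B_R}\zeta D\zeta\cdot(u-c)\,dx=\tfrac12\int_{B_R}D(\zeta^2)\cdot(u-c)\,dx=-\tfrac12\int_{B_R}\zeta^2\divg u\,dx=0$, so $p$ may be replaced by $p-(p)_{B_R}$ for free. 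Then Lemma \ref{160808@lem2} gives $\|p-(p)_{B_R}\|_{L_2(B_R)}\lesssim_{d,\lambda}\|Du\|_{L_2(B_R)}$, and Young's inequality turns $\int 2(p-(p)_{B_R})\zeta D\zeta\cdot(u-c)\,dx$ into $\varepsilon\|p-(p)_{B_R}\|_{L_2(B_R)}^2+C_\varepsilon R^{-2}\|u-c\|_{L_2(B_R)}^2$; the first term is $\le \varepsilon C\|Du\|_{L_2(B_R)}^2$. Here a subtlety arises: the absorbed term $\varepsilon C\|Du\|_{L_2(B_R)}^2$ is over the full ball $B_R$, not just where $\zeta=1$, so a plain absorption does not close. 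The standard fix is to run the argument on a pair of concentric balls $B_{r}\subset B_{\rho}$ with $R/2\le r<\rho\le R$, obtaining an inequality of the form $\int_{B_r}|Du|^2\le \tfrac12\int_{B_\rho}|Du|^2 + C(\rho-r)^{-2}\int_{B_R}|u-c|^2$, and then apply the classical iteration lemma (hole-filling) to remove the $\tfrac12\int_{B_\rho}|Du|^2$ term. Finally, by the Poincaré inequality $\|u-c\|_{L_2(B_R)}\lesssim R\|Du\|_{L_2(B_R)}$ is the wrong direction — instead we keep $\|u-(u)_{B_R}\|_{L_2(B_R)}\le 2\|u\|_{L_2(B_R)}$ trivially, which yields exactly $\int_{B_{R/2}}|Du|^2\lesssim_{d,\lambda}R^{-2}\int_{B_R}|u|^2$.

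For the annular variants $B_{5R/4}\setminus\overline{B_{R/4}}\supset B_R\setminus\overline{B_{R/2}}$, the same proof works verbatim with cutoffs supported in the larger annulus and equal to $1$ on the smaller one; the key facts used — solvability of the divergence equation and the pressure oscillation bound — are stated in Lemma \ref{161121@lem3} and Lemma \ref{160808@lem2} precisely for these annular domains (and their half-space counterparts), so nothing new is required. For part $(b)$, the cutoff $\zeta$ is chosen so that $\zeta^2(u-c)$ still lies in $\mathring W^1_2(B_R^+)^d$; since $u=0$ on $B_R\cap\partial\bR^d_+$ we must take $c=0$ there, i.e. we simply use $\phi=\zeta^2 u\in \mathring W^1_2(B_R^+)^d$ (which vanishes on the flat part of the boundary because $u$ does, and on the spherical part because $\zeta$ does). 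The identity $\int_{B_R^+}\zeta D\zeta\cdot u\,dx=-\tfrac12\int_{B_R^+}\zeta^2\divg u\,dx=0$ again lets us replace $p$ by $p-(p)_{B_R^+}$, and Lemma \ref{160808@lem2} (half-ball version) and Lemma \ref{161121@lem3}$(b)$ close the argument exactly as before, with $\int_{B_R^+}|u-c|^2$ replaced by $\int_{B_R^+}|u|^2$. The half-annulus case $B_{5R/4}^+\setminus\overline{B_{R/4}}$ is identical. I expect the only genuinely delicate point to be the bookkeeping that makes the pressure term disappear upon subtracting its average and the accompanying hole-filling iteration; everything else is routine.
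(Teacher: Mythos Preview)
Your proposal is correct and follows essentially the same approach as the paper: test with $\zeta^2 u$ (or $\zeta^2(u-c)$), use $\divg u=0$ to replace $p$ by $p-(p)$, invoke Lemma~\ref{160808@lem2} to control the pressure oscillation by $\|Du\|_{L_2}$, and close via an iteration over nested domains. The paper's sketch differs only cosmetically---it works directly with $\eta^2 u$ (no mean subtraction of $u$), and carries out the iteration explicitly by setting $\varepsilon=1/8$, $r_k=\tfrac{R}{4}(1-2^{-k})$, multiplying the $k$th inequality by $\varepsilon^k$ and summing---which is just a concrete instance of the hole-filling lemma you cite.
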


\begin{proof}
For a proof, one can just refer to the proofs of \cite[Lemma 3.2]{MR2027755} and  \cite[Lemma 3.6]{arXiv:1604.02690v2} with obvious modifications.
For reader's convenience we sketch the proof for the case 
when $(u,p)\in W^1_2\big(B^+_{5R/4}\setminus \overline{B_{R/4}}\big)^d\times L_2\big(B^+_{5R/4}\setminus \overline{B_{R/4}}\big)$
 in $(b)$. 

We denote for $r>0$ 
\[\cC_r=B_{R+r}\setminus B_{\frac{R}{2}-r} \quad \text{and} \quad \cC_r^+=B_{R+r}^+\setminus B_{\frac{R}{2}-r}^+.\]
Let $0<\rho<r\le R/4$ and $\eta$ be a smooth function on $\bR^d$ satisfying
\[0\le \eta\le 1, \quad \eta=1\,\text{ on }\, \cC_\rho, \quad \operatorname{supp}\eta\subset \cC_r, \quad |D\eta| \lesssim (r-\rho)^{-1}.\]
Using $\eta^2u$ as a test function to
\[\cL u+\nabla p=0 \quad \text{in }\, \cC_r^+,\]
we obtain the Caccioppoli type inequality; for all $\varepsilon>0$
\[\int_{\cC_\rho^+} |Du|^2\,dx\le \varepsilon\int_{\cC_r^+}|p-(p)_{\cC_r^+}|^2\,dx + \frac{C(d,\lambda, \varepsilon)}{ (r-\rho)^{2}}\int_{\cC_r^+}|u|^2\,dx.\]
Using the pressure estimate, Lemma \ref{160808@lem2}, we have for all $0<\rho<r \le \frac{R}{4}$
\begin{equation}		\label{160921@eq1}
\int_{\cC_\rho^+}|Du|^2\,dx\le \varepsilon \int_{\cC_r^+}|Du|^2\,dx + \frac{C}{(r-\rho)^2}\int_{\cC_r^+}|u|^2\,dx.
\end{equation}
For $k=0,1,2,\ldots$ we set 
\[\varepsilon=\frac{1}{8}, \quad r_k=\frac{R}{4}\left(1-\frac{1}{2^k}\right)\]
so that \eqref{160921@eq1} becomes 
\[\int_{\cC_{r_k}^+}|Du|^2\,dx\le \varepsilon \int_{\cC_{r_{k+1}}^{+}}|Du|^2\,dx + \frac{C4^k}{R^2}\int_{\cC_{r_{k+1}}^{+}}|u|^2\,dx.\]
Multiplying $\varepsilon^k$ and summing the estimates we obtain the required result.
\end{proof}

\begin{lemma}		\label{161121@lem7}
\begin{enumerate}[$(a)$]
\item
Let Assumption \ref{ASSA} hold. 
If
$(u,p)\in W^1_2(B_R(x^0))^d\times L_2(B_R(x^0))$ satisfies \eqref{161121@EQ1} with $x^0\in \bR^d$ and $0<R\le R_0$, then 
\[\|u\|_{L_\infty(B_{R/2}(x^0))} \lesssim_{d,C_0,\alpha_0} R^{-d}\|u\|_{L_1(B_R(x^0))}.\]
\item
Let Assumptions \ref{ASSA} and \ref{ASSB} hold. 
If
$(u, p)\in W^1_2(B_R^+(x^0))^d\times L_2(B_R^+(x^0))$ satisfies \eqref{161025@eq1} with $x^0\in \partial \bR^d_+$ and $0<R\le \min\{R_0,R_1\}$, then 
\[\|u\|_{L_\infty(B_{R/2}^+(x^0))} \lesssim_{d,C_0,\alpha_0,C_1} R^{-d}\|u\|_{L_1(B_R^+(x^0))}.\]
\end{enumerate}
\end{lemma}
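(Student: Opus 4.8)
The plan is to deduce the $L_\infty$–$L_1$ bound from the $L_\infty$–$L_2$ bound that follows from Assumption \ref{ASSA} (resp. Assumptions \ref{ASSA}, \ref{ASSB}) by a standard interpolation / iteration argument, using that a H\"older bound on $B_{R/2}$ immediately gives an $L_\infty$ bound there. First I would record that for $(u,p)$ solving \eqref{161121@EQ1} on $B_R(x^0)$, the estimate in Assumption \ref{ASSA} combined with the trivial bound $|u(x^0)|\le (\fint_{B_R}|u|^2)^{1/2}$ (which itself needs a short argument: average the H\"older estimate, or rather use that oscillation plus average controls sup) yields
\[
\|u\|_{L_\infty(B_{R/2}(x^0))}\lesssim_{d,C_0,\alpha_0}\Big(\fint_{B_R(x^0)}|u|^2\,dx\Big)^{1/2}.
\]
The same statement holds on half-balls under Assumption \ref{ASSB}, which is literally \eqref{161019@aa1}, so part $(b)$'s $L_\infty$–$L_2$ step is given; and the interior boundary regularity needed in $(b)$ away from $\{x_1=0\}$ is again Assumption \ref{ASSA}. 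So the only real content is passing from the $L_2$ average on the right to the $L_1$ average.

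The key step is the interpolation iteration. Fix $x^0$ and $R$, and for $1/2\le t<s\le 1$ let $B_t=B_{tR}(x^0)$. Applying the $L_\infty$–$L_2$ estimate on balls $B_{\sigma R}(z)$ with $z\in B_t$ and $\sigma R\sim (s-t)R$ (these stay inside $B_s\subset B_R$, and $\sigma R\le R_0$), and then interpolating
\[
\Big(\fint_{B_{\sigma R}(z)}|u|^2\Big)^{1/2}
\le \|u\|_{L_\infty(B_{\sigma R}(z))}^{1/2}\Big(\fint_{B_{\sigma R}(z)}|u|\Big)^{1/2},
\]
one gets, after taking sup over $z\in B_t$ and using Young's inequality $ab\le \varepsilon a^2+C_\varepsilon b^2$,
\[
\|u\|_{L_\infty(B_t)}\le \tfrac12\|u\|_{L_\infty(B_s)}+\frac{C(d,C_0,\alpha_0)}{((s-t)R)^d}\|u\|_{L_1(B_R(x^0))}.
\]
A standard iteration lemma (e.g.\ the one used to absorb a small multiple of the quantity being estimated, as in Giaquinta) applied to the nondecreasing finite quantity $t\mapsto \|u\|_{L_\infty(B_t)}$ then gives $\|u\|_{L_\infty(B_{1/2})}\lesssim_{d,C_0,\alpha_0} R^{-d}\|u\|_{L_1(B_R(x^0))}$, which is $(a)$. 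For $(b)$ the argument is identical with balls replaced by half-balls $B_{\sigma R}^+(z)$ centered at $z\in B_t^+$: near the flat boundary one uses \eqref{161019@aa1} from Assumption \ref{ASSB}, while for half-balls that do not touch $\{x_1=0\}$ one uses the interior estimate of Assumption \ref{ASSA}; in either case the $L_\infty$–$L_2$ bound on $B_{\sigma R}^+(z)$ holds with constant $C(d,C_0,\alpha_0,C_1)$, and the same interpolation–iteration closes the estimate. Note the need for $R\le\min\{R_0,R_1\}$ so that all the small balls used are admissible for both assumptions.

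The main obstacle, such as it is, is purely bookkeeping: one must make sure the auxiliary balls $B_{\sigma R}(z)$ (or half-balls) used at each stage are contained in $B_R(x^0)$ and have radius $\le R_0$ (resp.\ $\le\min\{R_0,R_1\}$) so that the hypotheses of Assumption \ref{ASSA}/\ref{ASSB} genuinely apply, and that $\|u\|_{L_\infty(B_1)}<\infty$ to start (which follows from the $L_\infty$–$L_2$ bound applied once, since $u\in W^1_2\subset L_2$). There is also the minor point that the $L_\infty$–$L_2$ estimate is not stated explicitly but must be extracted from the H\"older estimate in Assumption \ref{ASSA}: one writes, for $x\in B_{R/2}(x^0)$, $|u(x)|\le |u(x)-(u)_{B_{R/2}(x^0)}| + |(u)_{B_{R/2}(x^0)}|$, bounds the first term by $[u]_{C^{\alpha_0}}R^{\alpha_0}\lesssim (\fint_{B_R}|u|^2)^{1/2}$ and the second by $(\fint_{B_{R/2}}|u|^2)^{1/2}\le C(\fint_{B_R}|u|^2)^{1/2}$. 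Once this is in hand, the proof is the classical De Giorgi–Nash–Moser-type passage from $L_2$ to $L_1$ averages and involves no new ideas specific to the Stokes structure, since the pressure $p$ has already been eliminated at the level of the regularity assumption.
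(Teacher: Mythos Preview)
Your proposal is correct and follows essentially the same route as the paper: both arguments first extract an $L_\infty$--$L_2$ bound from Assumption~\ref{ASSA} (and, in part $(b)$, the boundary $L_\infty$--$L_2$ bound \eqref{161019@aa1} from Assumption~\ref{ASSB} together with a case distinction interior/near-boundary), then interpolate $\|u\|_{L_2}\le \|u\|_{L_\infty}^{1/2}\|u\|_{L_1}^{1/2}$, apply Young's inequality to obtain an absorption inequality of the form $\|u\|_{L_\infty(B_t)}\le \tfrac12\|u\|_{L_\infty(B_s)}+C((s-t)R)^{-d}\|u\|_{L_1(B_R)}$, and finish with the standard Giaquinta iteration lemma. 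Your remark that the $L_\infty$--$L_2$ step in $(a)$ must be derived from the H\"older seminorm bound (via $|u(x)|\le |u(x)-(u)_{B_{R/2}}|+|(u)_{B_{R/2}}|$) is in fact a detail the paper glosses over.
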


\begin{proof}
We only prove the second assertion of the lemma because the first one is the same with obvious modifications.
Let $0<r<R$ and set $\rho=\frac{R-r}{8}$.
We can choose $y^0\in B_r^+(x^0)$ satisfying 
\[\frac{1}{2} \sup_{B_r^+(x^0)} |u|^2 \le \sup_{B_\rho(y^0)\cap \bR^d_+} |u|^2.\]
If $2\rho\le \operatorname{dist}(y^0,\partial \bR^d_+)$, then by Assumption \ref{ASSA}
\[\sup_{B_\rho(y^0)} |u|^2 \lesssim \fint_{B_{2\rho}(y^0)} |u|^2\,dx \lesssim (R-r)^{-d} \int_{B_{R}^+(x^0)} |u|^2\,dx.\]
On the other hand, if $2\rho>\operatorname{dist}(y^0,\partial \bR^d_+)$, then by Assumption \ref{ASSB}
\[\sup_{B_\rho(y^0)\cap \bR^d_+} |u|^2\lesssim \sup_{B_{4\rho}^+(z^0)}|u|^2 \lesssim \fint_{B_{8\rho}^+(z^0)} |u|^2\,dx \lesssim (R-r)^{-d} \int_{B_{R}^+(x^0)} |u|^2\,dx,\]
where $z^0=(0,y^0_2,\ldots,y^0_d)$.
Hence Young's inequality yields that for $0<r<R$ and $\varepsilon>0$
\begin{align*}
\sup_{B_r^+(x^0)} |u|
&\lesssim_{d,C_0,\alpha_0,C_1} (R-r)^{-d/2}\|u\|_{L_2(B_R^+(x^0))} \\
&\le \varepsilon \sup_{B_R^+(x^0)} |u| + C_\varepsilon (R-r)^{-d}\|u\|_{L_1(B_R^+(x^0))}. 
\end{align*}
Now, the result follows from a standard iteration argument in \cite[pp. 80--82]{MR1239172}. 
\end{proof}

%=========================================================
\section{Proof of Theorem \ref{MRA}}
\label{S5}
%=========================================================

The proof is a modification of the argument for elliptic systems found in Hofmann--Kim \cite[Theorem 3.1]{MR2341783}.
Throughout this section, $R_0$, $C_0$, and $\alpha_0$ are constants in Assumption \ref{ASSA}, and we divide the proof into several steps.

\begin{enumerate}[\bf{Step} 1)]
\item
First we define an averaged fundamental solution on $\bR^d$ as follows.
For each $\varepsilon>0$, $y\in \bR^d$, and $k\in \{1,\ldots,d\}$ we denote 
\[f_{\varepsilon;y,k} 
= \frac{\chi_{B_\varepsilon(y)}}{|B_\varepsilon(y)|}e_k\]
where $\chi_{B_\varepsilon(y)}$ is the characteristic function and $e_k$ is the $k$-th unit vector in $\bR^d$.
By Lemma \ref{161121@lem1} there is a unique weak solution $(v_{\varepsilon;y,k},\pi_{\varepsilon;y,k}) \in Y^1_2(\bR^d)^d\times L_2(\bR^d)$ to
\[\left\{
\begin{aligned}
\cL v + \nabla \pi = f_{\varepsilon;y,k}  &\quad \text{in }\, \bR^d,\\
\divg v = 0 &\quad \text{in }\, \bR^d.
\end{aligned}
\right.\]
We define \emph{the averaged fundamental solution} $(V_\varepsilon(\cdot,y), \Pi_\varepsilon(\cdot,y))$ by 
\[V_\varepsilon^{jk}(\cdot,y)=v_{\varepsilon;y,k}^j \quad \text{and}\quad \Pi_\varepsilon^k(\cdot,y)=\pi_{\varepsilon;y,k}.\]
Hereafter, we denote by $V_\varepsilon^{\cdot k}(x,y)$ the $k$-th column of $V_\varepsilon(x,y)$.
Then 
\begin{equation}		\label{160727@eq1}
\int_{\bR^d} A^{\alpha\beta}D_\beta V_\varepsilon^{\cdot k}(\cdot,y)\cdot D_\alpha \phi\,dx - \int_{\bR^d}\Pi_\varepsilon^k(\cdot,y)\divg  \phi\,dx=\fint_{B_{\varepsilon}(y)}\phi^k\,dx
\end{equation}
for all $\phi \in C^\infty_0(\bR^d)^d$.
Moreover, from \eqref{160721@eq2}, 
\begin{equation}		\label{160727@eq1a}
\|DV_\varepsilon(\cdot,y)\|_{L_2(\bR^d)}+\|\Pi_\varepsilon(\cdot,y)\|_{L_2(\bR^d)}\lesssim \varepsilon^{1-d/2}.
\end{equation}
\item
We prove the local pointwise estimate for $V_\varepsilon(x,y)$.
\begin{lemma}		
\label{L51}
If Assumption \ref{ASSA} holds, then 
\[|V_\varepsilon(x,y)| \lesssim_{d,\lambda,C_0,\alpha_0} |x-y|^{2-d}\]
for all $x,\,y\in \bR^d$ and $\varepsilon>0$ satisfying $0<\varepsilon\le|x-y|/3\le R_0/2$.
\end{lemma}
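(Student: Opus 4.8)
The plan is to run the classical Hofmann--Kim argument adapted to the Stokes system. Fix $x, y$ with $0 < \varepsilon \le |x-y|/3 \le R_0/2$, and write $r = |x-y|$. The key point is that in the ball $B_{2r/3}(x)$ we have $B_\varepsilon(y) \cap B_{2r/3}(x) = \emptyset$, so the pair $(V_\varepsilon^{\cdot k}(\cdot,y), \Pi_\varepsilon^k(\cdot,y))$ solves the homogeneous Stokes system $\cL V_\varepsilon^{\cdot k}(\cdot,y) + \nabla \Pi_\varepsilon^k(\cdot,y) = 0$, $\divg V_\varepsilon^{\cdot k}(\cdot,y) = 0$ in $B_{2r/3}(x)$ by \eqref{160727@eq1}. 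Hence Assumption \ref{ASSA} (via the sup-estimate in Lemma \ref{161121@lem7}(a), applied on a ball of radius $\le R_0$, which requires $2r/3 \le R_0$, i.e. $r \le R_0$ — guaranteed since $r/3 \le R_0/2$) gives
\[
|V_\varepsilon^{\cdot k}(x,y)| \le \|V_\varepsilon^{\cdot k}(\cdot,y)\|_{L_\infty(B_{r/3}(x))} \lesssim_{d,C_0,\alpha_0} \left(\fint_{B_{2r/3}(x)} |V_\varepsilon^{\cdot k}(\cdot,y)|^2\,dx\right)^{1/2} \lesssim r^{-d/2}\|V_\varepsilon(\cdot,y)\|_{L_2(B_{2r/3}(x))}.
\]
So the whole lemma reduces to the $L_2$ bound $\|V_\varepsilon(\cdot,y)\|_{L_2(B_{2r/3}(x))} \lesssim r^{2-d/2}$, or more robustly a bound on $\|V_\varepsilon(\cdot,y)\|_{L_2(B_s(y)\setminus B_{s/2}(y))}$ for dyadic radii $s$ with $\varepsilon \lesssim s \lesssim r$.

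To get that $L_2$ bound I would use a duality / energy argument on dyadic annuli centered at $y$. For a dyadic radius $s$ with $3\varepsilon \le s \le r$, consider the annulus $\cA_s = B_{2s}(y)\setminus B_{s/2}(y)$; on a slightly larger annulus the pair is a homogeneous solution, so by the Caccioppoli-type / energy estimate (Lemma \ref{160920@lem2}, in the annular form) together with the pressure estimate (Lemma \ref{160808@lem2}),
\[
\int_{\cA_s}|DV_\varepsilon(\cdot,y)|^2\,dx \lesssim s^{-2}\int_{B_{4s}(y)\setminus B_{s/4}(y)}|V_\varepsilon(\cdot,y)|^2\,dx.
\]
Combining this with the interior sup-bound already established (which controls $\sup_{\cA_s}|V_\varepsilon|$ by $s^{-d/2}$ times the $L_2$ norm on a dilated annulus) and the Sobolev inequality $\|V_\varepsilon(\cdot,y)\|_{L_{2d/(d-2)}} \lesssim \|DV_\varepsilon(\cdot,y)\|_{L_2}$ on $\bR^d$, one derives a decay estimate relating the quantity $\Phi(s) := \|DV_\varepsilon(\cdot,y)\|_{L_2(\bR^d\setminus B_s(y))}$ (or the analogous $L_2$-average on annuli) at scale $s$ to its value at scale $2s$. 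The global energy bound \eqref{160727@eq1a} gives the base case $\Phi(\varepsilon) \lesssim \varepsilon^{1-d/2}$, and iterating the decay/Sobolev relation from scale $\varepsilon$ up to scale $r$ upgrades this to $\|V_\varepsilon(\cdot,y)\|_{L_2(B_{2r/3}(x))} \lesssim r^{2-d/2}$, which is exactly what is needed.

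The main obstacle is the pressure term: unlike for elliptic systems, $\Pi_\varepsilon^k(\cdot,y)$ is only determined up to a constant on each subdomain, so the Caccioppoli inequality on an annulus produces a term $\int |\Pi_\varepsilon - (\Pi_\varepsilon)_{\cA_s}|^2$ that must be reabsorbed. This is handled by Lemma \ref{160808@lem2} applied on the annular domains $B_R\setminus\overline{B_{R/2}}$ exactly as in the sketch of Lemma \ref{160920@lem2}, so no genuinely new estimate is required — but one has to be careful that all the constants in these reabsorption steps depend only on $d$ and $\lambda$, and that the local boundedness input from Assumption \ref{ASSA} is invoked only on balls of radius $\le R_0$, which is why the hypothesis $|x-y|/3 \le R_0/2$ is imposed. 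A secondary bookkeeping point is to keep the iteration uniform in $\varepsilon$: the number of dyadic scales between $\varepsilon$ and $r$ is $\sim \log(r/\varepsilon)$, so the per-scale decay factor must be a fixed constant strictly less than controlling exponential growth — this is automatic once the Sobolev exponent $2d/(d-2) > 2$ is used to gain a genuine power of the radius at each step, so the telescoping sum converges independently of how small $\varepsilon$ is.
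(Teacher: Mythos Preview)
Your reduction in the first paragraph is correct and matches the paper: since $(V_\varepsilon^{\cdot k}(\cdot,y),\Pi_\varepsilon^k(\cdot,y))$ solves the homogeneous Stokes system in a ball of radius $\sim r$ centered at $x$, Assumption~\ref{ASSA} (via Lemma~\ref{161121@lem7}) bounds $|V_\varepsilon(x,y)|$ by a local average. The problem is entirely in your second step.

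The dyadic iteration you outline does not close. Combining the annular Caccioppoli estimate with the Sobolev inequality yields, after the obvious bookkeeping, a relation of the form
\[
\|DV_\varepsilon(\cdot,y)\|_{L_2(\bR^d\setminus B_s(y))}\ \lesssim\ s^{-1}\|V_\varepsilon(\cdot,y)\|_{L_2(B_{5s/4}(y)\setminus B_{s/4}(y))}\ \lesssim\ \|V_\varepsilon(\cdot,y)\|_{L_{2d/(d-2)}(\bR^d\setminus B_{s/4}(y))}\ \lesssim\ \|DV_\varepsilon(\cdot,y)\|_{L_2(\bR^d\setminus B_{s/8}(y))},
\]
i.e.\ $\Phi(s)\le C\,\Phi(s/8)$ with a structural constant $C\ge 1$. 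The factor of $s$ you hope to gain from the Sobolev embedding is exactly cancelled by the $s^{-1}$ in Caccioppoli, so the ``per-scale decay factor'' is not $<1$; it is a dimensionless $C$ that you have no mechanism to make small. Iterating from scale $\varepsilon$ to scale $r$ therefore produces a bound of the form $(r/\varepsilon)^\beta\varepsilon^{1-d/2}$, which is just the trivial global energy estimate \eqref{160727@eq1a} and is not uniform in $\varepsilon$. Your remark that ``the telescoping sum converges independently of how small $\varepsilon$ is'' is precisely the unproved assertion.

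What is missing is the duality step, which is the actual content of the Hofmann--Kim argument you invoke by name. The paper proves $\|V_\varepsilon(\cdot,y)\|_{L_1(B_R(x))}\lesssim R^2$ by solving the \emph{adjoint} problem $\cL^*u+\nabla p=f$, $\divg u=0$, with $f\in L_\infty$ supported in $B_R(x)$, testing both equations against each other to obtain $\int V_\varepsilon^{\cdot k}\cdot f=\fint_{B_\varepsilon(y)}u^k$, and then applying Assumption~\ref{ASSA} a second time to $u$ near $y$ (where $u$ is a homogeneous adjoint solution) together with the a~priori bound \eqref{160721@eq2}. This double use of local boundedness --- once for $V_\varepsilon$ near $x$ and once for the adjoint solution near $y$ --- is what supplies the full power $r^{2-d}$; your single-sided iteration cannot recover it. Note also that the paper's Lemma~\ref{L52}, whose proof superficially resembles your annular argument, uses Lemma~\ref{L51} as input at the final step, so you cannot reverse the logical order.
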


\begin{proof}
Let 
\[0<\varepsilon\le R:=\frac{|x-y|}{3}\le \frac{R_0}{2}.\]
Since $(V_\varepsilon(\cdot,y), \Pi_\varepsilon(\cdot,y))$ satisfies 
\[\left\{
\begin{aligned}
\cL V_\varepsilon^{\cdot k}(\cdot,y) + \nabla\Pi^k_\varepsilon(\cdot,y)=0 &\quad \text{in }\, B_R(x),\\
\divg  V_\varepsilon^{\cdot k}(\cdot,y)=0 &\quad \text{in }\, B_R(x),
\end{aligned}
\right.\]
By Lemma \ref{161121@lem7} 
$$
|V_\varepsilon^{\cdot k}(x,y)|\lesssim_{d,C_0,\alpha_0} R^{-d}\|V_\varepsilon^{\cdot k}(\cdot,y)\|_{L_1(B_{R}(x))}.
$$
Thus, it suffices to show that 
\begin{equation}		\label{161018@eq1a}
\|V_\varepsilon^{\cdot k}(\cdot,y)\|_{L_1(B_R(x))}\lesssim R^2.
\end{equation}
Let $f\in L_\infty(\bR^d)^d$ with $\operatorname{supp}f\subset B_R(x)$ and $(u,p)\in {Y}^1_2(\bR^d)^d\times L_2(\bR^d)$ be the weak solution  to 
\[\left\{
\begin{aligned}
\cL^* u + \nabla p=f &\quad \text{in }\, \bR^d,\\
\divg u=0 &\quad \text{in }\, \bR^d.
\end{aligned}
\right.\]
By testing with $V_\varepsilon^{\cdot k}(\cdot,y)$ in the above system, \[\int_{\bR^d}A^{\alpha\beta}D_\beta V_\varepsilon^{\cdot k}(\cdot,y)\cdot D_\alpha u\,dx=\int_{B_R(x)}V_\varepsilon^{\cdot k}(\cdot,y)\cdot f\,dx.\]
Also, by testing with $\phi=u$ in \eqref{160727@eq1}, 
\[\int_{\bR^d}A^{\alpha\beta}D_\beta V_\varepsilon^{\cdot k}(\cdot,y)\cdot D_\alpha u\,dx=\fint_{B_\varepsilon(y)}u^k\,dx.\]
Hence 
\begin{equation}		\label{160822@eq3}
\int_{B_R(x)}V_\varepsilon^{\cdot k}(\cdot,y)\cdot f\,dx = \fint_{B_\varepsilon(y)}u^k\,dx.
\end{equation}
Since $(u,p)$ satisfies
\[\left\{
\begin{aligned}
\cL^* u + \nabla p=0 &\quad \text{in }\, B_{2R}(y),\\
\divg u=0 &\quad \text{in }\, B_{2R}(y),
\end{aligned}
\right.\]
we use Lemma \ref{161121@lem7}, H\"older's inequality, and the Sobolev inequality to obtain 
\[\|u\|_{L_\infty(B_{R}(y))} \lesssim R^{1-d/2}\|u\|_{L_{2d/(d-2)}(\bR^d)}\lesssim R^{1-d/2}\|Du\|_{L_{2}(\bR^d)}.\]
Thus, from the estimate \eqref{160721@eq2} we conclude that 
\[\|u\|_{L_\infty(B_{R}(y))}\lesssim_{d,\lambda,C_0,\alpha_0} R^2\|f\|_{L_\infty(B_R(x))}.\]
Using this together with \eqref{160822@eq3} and the duality argument,  we get \eqref{161018@eq1a}.
\end{proof}
\item
We prove the uniform estimates for $V_\varepsilon(\cdot,y)$.

\begin{lemma}		
\label{L52}
If Assumption \ref{ASSA} holds, then for any  $y\in \bR^d$, $0<R\le R_0$, and $\varepsilon>0$
\begin{equation}		\label{160802@eq1}
\|V_\varepsilon(\cdot,y)\|_{Y^1_2(\bR^d\setminus B_R(y))} \lesssim_{d,\lambda,C_0,\alpha_0} R^{1-d/2}.
\end{equation}
\end{lemma}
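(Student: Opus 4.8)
The plan is to treat separately the regime where $\varepsilon$ is comparable to $R$ --- where \eqref{160727@eq1a} together with the Sobolev inequality already gives more than we need --- and the main regime $\varepsilon\ll R$, where the bound follows from a weighted energy inequality on the exterior of a ball centered at $y$, fed by the pointwise estimate of Lemma~\ref{L51}.

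Fix a small constant $c_0=c_0(d)$. If $\varepsilon\ge c_0R$, then since $d\ge3$ we have $1-d/2<0$, so \eqref{160727@eq1a} yields $\|DV_\varepsilon(\cdot,y)\|_{L_2(\bR^d)}\lesssim\varepsilon^{1-d/2}\le(c_0R)^{1-d/2}\lesssim R^{1-d/2}$; as $Y^1_2(\bR^d)=\rY^1_2(\bR^d)$, the Sobolev inequality promotes this to $\|V_\varepsilon(\cdot,y)\|_{Y^1_2(\bR^d)}\lesssim R^{1-d/2}$, which is stronger than \eqref{160802@eq1}. So assume $\varepsilon<c_0R$. The heart of the argument is the claim that for every $r\in[R/2,R]$ and each $k$,
\[\|DV_\varepsilon^{\cdot k}(\cdot,y)\|_{L_2(\bR^d\setminus B_r(y))}\lesssim_{d,\lambda,C_0,\alpha_0}r^{1-d/2}.\]
To prove it I would pick $\zeta\in C^\infty(\bR^d)$ with $\zeta\equiv1$ off $B_r(y)$, $\zeta\equiv0$ on $B_{r/2}(y)$, $|D\zeta|\lesssim r^{-1}$, so that $D\zeta$ is supported in the annulus $\mathcal{A}:=B_r(y)\setminus\overline{B_{r/2}(y)}$, which is disjoint from $B_\varepsilon(y)$ (note $\varepsilon<c_0R\le2c_0r$, so shrinking $c_0$ makes $\varepsilon$ as small as needed relative to $r$). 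Testing \eqref{160727@eq1}, extended to $\phi\in\rY^1_2(\bR^d)^d$ by density, against $\phi=\zeta^2V_\varepsilon^{\cdot k}(\cdot,y)$ --- whose $k$-th component vanishes on $B_\varepsilon(y)$, so the right side is $0$ --- and using $\divg V_\varepsilon^{\cdot k}(\cdot,y)=0$, one gets
\[\int_{\bR^d}\zeta^2 A^{\alpha\beta}D_\beta V_\varepsilon^{\cdot k}\cdot D_\alpha V_\varepsilon^{\cdot k}\,dx=-2\int_{\bR^d}\zeta\,A^{\alpha\beta}D_\beta V_\varepsilon^{\cdot k}\cdot(D_\alpha\zeta)V_\varepsilon^{\cdot k}\,dx+\int_{\bR^d}\Pi_\varepsilon^k\,\divg(\zeta^2 V_\varepsilon^{\cdot k})\,dx.\]

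Strong ellipticity \eqref{161123@EQ1} bounds the left side below by $\lambda\int\zeta^2|DV_\varepsilon^{\cdot k}|^2$; the first term on the right is handled by Young's inequality, absorbing $\delta\int\zeta^2|DV_\varepsilon^{\cdot k}|^2$ at the cost of $Cr^{-2}\int_{\mathcal{A}}|V_\varepsilon^{\cdot k}|^2$. The pressure term, $\int\Pi_\varepsilon^k\,\divg(\zeta^2V_\varepsilon^{\cdot k})=\int_{\mathcal{A}}\Pi_\varepsilon^k\,2\zeta D\zeta\cdot V_\varepsilon^{\cdot k}$, is the delicate point: $\divg(\zeta^2V_\varepsilon^{\cdot k})$ has zero integral over $\bR^d$ (integrate $D(\zeta^2-1)\cdot V_\varepsilon^{\cdot k}$ by parts, using that $\zeta^2-1$ is compactly supported and $\divg V_\varepsilon^{\cdot k}=0$), so $\Pi_\varepsilon^k$ may be replaced by $\Pi_\varepsilon^k-(\Pi_\varepsilon^k)_{\mathcal{A}}$; since $(V_\varepsilon^{\cdot k},\Pi_\varepsilon^k)$ solves the homogeneous Stokes system on the annulus $\mathcal{A}$, Lemma~\ref{160808@lem2} bounds this term by $Cr^{-1}\|DV_\varepsilon^{\cdot k}\|_{L_2(\mathcal{A})}\|V_\varepsilon^{\cdot k}\|_{L_2(\mathcal{A})}$, and Lemma~\ref{160920@lem2} then bounds $\|DV_\varepsilon^{\cdot k}\|_{L_2(\mathcal{A})}$ by $Cr^{-1}\|V_\varepsilon^{\cdot k}\|_{L_2(B_{5r/4}(y)\setminus\overline{B_{r/4}(y)})}$. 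On that slightly larger annulus Lemma~\ref{L51} applies --- the constraints $\varepsilon<c_0R$ (with $c_0$ small) and $r\le R\le R_0$ make both $3\varepsilon\le|x-y|$ and $|x-y|/3\le R_0/2$ hold there --- giving $|V_\varepsilon^{\cdot k}|\lesssim r^{2-d}$ and hence $\|V_\varepsilon^{\cdot k}\|_{L_2(\cdots)}\lesssim r^{2-d/2}$. Collecting these, taking $\delta=\lambda/2$ and absorbing, yields $\int_{\bR^d}\zeta^2|DV_\varepsilon^{\cdot k}|^2\lesssim r^{2-d}$, which is the claim since $\zeta\equiv1$ off $B_r(y)$.

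With the claim in hand, taking $r=R$ and summing over $k$ gives the gradient part of \eqref{160802@eq1}. For the $L_{2d/(d-2)}$ part, with $\zeta$ as above at scale $r=R$ one has $\zeta V_\varepsilon^{\cdot k}\in\rY^1_2(\bR^d)$, so the Sobolev inequality gives $\|V_\varepsilon^{\cdot k}\|_{L_{2d/(d-2)}(\bR^d\setminus B_R(y))}\le\|\zeta V_\varepsilon^{\cdot k}\|_{L_{2d/(d-2)}(\bR^d)}\lesssim\|DV_\varepsilon^{\cdot k}\|_{L_2(\bR^d\setminus B_{R/2}(y))}+R^{-1}\|V_\varepsilon^{\cdot k}\|_{L_2(\mathcal{A})}$; the first term is $\lesssim R^{1-d/2}$ by the claim with $r=R/2$, and the second is $\lesssim R^{-1}\cdot R^{2-d/2}=R^{1-d/2}$ by Lemma~\ref{L51} on $\mathcal{A}$. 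Summing over $k$ finishes the proof. I expect the pressure term to be the main obstacle: one must justify the mean-value subtraction (which rests precisely on $\divg V_\varepsilon^{\cdot k}=0$) and then trade $\|DV_\varepsilon^{\cdot k}\|_{L_2(\mathcal{A})}$ for the pointwise bound through the Caccioppoli inequality, all while keeping every radius inside the range where Lemmas~\ref{L51}, \ref{160808@lem2}, and \ref{160920@lem2} are applicable.
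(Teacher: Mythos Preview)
Your proof is correct and follows essentially the same route as the paper: split off the trivial case $\varepsilon\gtrsim R$ via \eqref{160727@eq1a}, then test \eqref{160727@eq1} against the cutoff times $V_\varepsilon^{\cdot k}$, subtract the annular mean of $\Pi_\varepsilon^k$ (justified by $\divg V_\varepsilon^{\cdot k}=0$), control the pressure by Lemma~\ref{160808@lem2}, pass to $\|V_\varepsilon^{\cdot k}\|_{L_2}$ on a slightly larger annulus via Lemma~\ref{160920@lem2}, and close with the pointwise bound of Lemma~\ref{L51}. The only cosmetic differences are that the paper uses the single cutoff $(1-\eta^2)$ at scale $R$ (getting both the gradient and the $L_{2d/(d-2)}$ bounds from one computation) whereas you parametrize by $r\in[R/2,R]$ and invoke the claim at two scales, and the paper fixes the threshold at $\varepsilon=R/12$ rather than an unspecified $c_0$.
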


\begin{proof}
When $\varepsilon\ge R/12$, we have, from \eqref{160727@eq1a} and the Sobolev inequality,
\[\|V_\varepsilon(\cdot,y)\|_{Y^1_2(\bR^d\setminus B_R(y))}\le \|V_\varepsilon(\cdot,y)\|_{Y^1_2(\bR^d)}\lesssim R^{1-d/2}.\]
So, we assume $\varepsilon \in (0, R/12)$.
Denote  $\cD=B_R(y)\setminus \overline{B_{R/2}(y)}$ and let  $\eta$ be a smooth function on $\bR^d$ satisfying 
\[0\le \eta\le 1, \quad \eta\equiv 1 \ \text{ on }\ B_{R/2}(y), \quad \operatorname{supp} \eta\subset B_{R}(y), \quad |D\eta|\lesssim R^{-1}.\]
Then 
\begin{equation}
\label{E50}
\begin{split}
&\|V_{\varepsilon}^{\cdot k}(\cdot,y)\|_{L_{2d/(d-2)}(\bR^d\setminus B_R(y)))} \\
&\le \|(1-\eta^2)V_{\varepsilon}^{\cdot k}(\cdot,y)\|_{L_{2d/(d-2)}(\bR^d)} \\
&\lesssim \|D((1-\eta^2)V^{\cdot k}_{\varepsilon}(\cdot,y))\|_{L_2(\bR^d)} \\
&\lesssim \|(1-\eta^2)DV^{\cdot k}_{\varepsilon}(\cdot,y)\|_{L_2(\bR^d)}+R^{-1}\|V^{\cdot k}_{\varepsilon}(\cdot,y)\|_{L_2(\cD)}.
\end{split}
\end{equation}
We shall show that 
\begin{equation}		
\label{160809@eq1}
\|(1-\eta^2)DV^{\cdot k}_{\varepsilon}(\cdot,y)\|_{L_2(\bR^d)}
\lesssim  R^{-1}\|V^{\cdot k}_{\varepsilon}(\cdot,y)\|_{L_2(\cD_0)}
\end{equation}
where $\cD_0=B_{5R/4}(y)\setminus \overline{B_{R/4}(y)}$.
To show this, we observe first that 
\[\int_{\bR^d}\divg \big((1-\eta^2) V^{\cdot k}_{\varepsilon}(\cdot,y)\big)\,dx=0,\]
so we can subtract an average to get 
\begin{equation}
\label{160807@eq1}
\begin{split}
&\left|\int_{\bR^d}\Pi_{\varepsilon}^k(\cdot,y)\divg \big((1-\eta^2)V^{\cdot k}_{\varepsilon}(\cdot,y)\big)\,dx\right|\\
&=\left|\int_{\bR^d}\big(\Pi_{\varepsilon}^k(\cdot,y) -(\Pi^k_\varepsilon(\cdot,y))_\cD\big)2\eta D\eta \cdot V^{\cdot k}_{\varepsilon}(\cdot,y)\,dx\right|\\
&\lesssim \int_{\cD}\big|\Pi^k_\varepsilon(\cdot,y)-(\Pi^k_{\varepsilon}(\cdot,y))_{\cD}\big|^2\,dx 
+  R^{-2}\int_{\cD}|V_{\varepsilon}^{\cdot k}(\cdot,y)|^2\,dx.
\end{split}
\end{equation}
Using the test function $\phi=(1-\eta^2) V_\varepsilon^{\cdot k}(\cdot,y)$ in \eqref{160727@eq1} and using \eqref{160807@eq1}, we get 
\begin{align*}
&\int_{\bR^d}(1-\eta^2)|DV_\varepsilon^{\cdot k}(\cdot,y)|^2\,dx \\
&\lesssim \int_{\cD}\big|\Pi_{\varepsilon}^k(\cdot,y)-(\Pi^k_\varepsilon(\cdot,y))_{\cD}\big|^2\,dx   
+ R^{-2}\int_{\cD}|V_\varepsilon^{\cdot k}(\cdot,y)|^2\,dx \\
&\quad + \int_{\cD}|DV_\varepsilon^{\cdot k}(\cdot,y)|^2\,dx.
\end{align*}
Thus, using Lemma \ref{160808@lem2} and Lemma \ref{160920@lem2} $(a)$ we get \eqref{160809@eq1}.

Finally, using Lemma \ref{L51} and the fact 
\[0<\varepsilon< \frac{R}{12}< \frac{|x-y|}{3}<\frac{5R}{12}<\frac{R_0}{2}, \quad \forall x\in \cD_0,\]
we have 
\[R^{-2}\int_{\cD_0}|V_\varepsilon^{\cdot k}(\cdot,y)|^2\,dx \lesssim R^{2-d}.\]
Combining this with \eqref{E50} and \eqref{160809@eq1} yields the estimate \eqref{160802@eq1}.
\end{proof}

\item
We prove uniform $L^q$-estimates for $V_\varepsilon(\cdot,y)$ and $DV_\varepsilon(\cdot,y)$.

\begin{lemma} 
\label{L53}
If Assumption \ref{ASSA} holds, then for any  $y\in \bR^d$, $0<R\le R_0$, and $\varepsilon>0$
\begin{align}
\label{160802@eq1a}
\|V_\varepsilon(\cdot,y)\|_{L_q(B_R(y))} \lesssim_{d,\lambda,C_0,\alpha_0,q} R^{2-d+d/q}, \quad q\in [1,d/(d-2)),\\
\label{160802@eq1b}
\|DV_\varepsilon(\cdot,y)\|_{L_q(B_R(y))} \lesssim_{d,\lambda,C_0,\alpha_0,q} R^{1-d+d/q}, \quad q\in [1,d/(d-1)).
\end{align}
\end{lemma}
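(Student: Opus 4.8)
The plan is to bound $V_\varepsilon(\cdot,y)$ and $DV_\varepsilon(\cdot,y)$ on $B_R(y)$ by slicing the ball into dyadic annuli around the pole $y$, estimating $V_\varepsilon$ on each annulus by the pointwise bound of Lemma \ref{L51}, estimating the innermost core ball (of radius comparable to $\varepsilon$, where Lemma \ref{L51} does not apply) by the global energy bound \eqref{160727@eq1a} together with the Sobolev inequality, and then summing a geometric series whose convergence forces exactly the stated ranges of $q$. For the gradient estimate the only additional ingredient is the annular Caccioppoli inequality of Lemma \ref{160920@lem2}$(a)$, used to pass from $V_\varepsilon$ to $DV_\varepsilon$ on each annulus.

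First I would dispose of the trivial case $\varepsilon\ge R/12$: here \eqref{160727@eq1a}, the Sobolev inequality, and H\"older's inequality on $B_R(y)$ give
\[
\|V_\varepsilon(\cdot,y)\|_{L_q(B_R(y))}\lesssim R^{d/q-(d-2)/2}\varepsilon^{1-d/2}\lesssim R^{2-d+d/q},\qquad \|DV_\varepsilon(\cdot,y)\|_{L_q(B_R(y))}\lesssim R^{d/q-d/2}\varepsilon^{1-d/2}\lesssim R^{1-d+d/q},
\]
using $1-d/2<0$, $\varepsilon\ge R/12$, and the fact that $q<d/(d-2)$ (resp.\ $q<d/(d-1)\le 3/2$) makes these powers of $R$ nonnegative. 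For $0<\varepsilon<R/12$ I would set $A_j=B_{2^{-j}R}(y)\setminus\overline{B_{2^{-j-1}R}(y)}$, let $N$ be the largest integer with $2^{-N-1}R\ge 3\varepsilon$ (so $2^{-N}R\sim\varepsilon$), and write $B_R(y)=B_{2^{-N}R}(y)\cup\bigcup_{j=0}^{N-1}A_j$. For $x\in A_j$ with $0\le j\le N-1$ one has $3\varepsilon\le|x-y|\le R\le R_0$, so Lemma \ref{L51} yields $|V_\varepsilon(x,y)|\lesssim(2^{-j}R)^{2-d}$ and hence $\int_{A_j}|V_\varepsilon(\cdot,y)|^q\,dx\lesssim(2^{-j}R)^{d+q(2-d)}$; the core ball $B_{2^{-N}R}(y)\subset B_{12\varepsilon}(y)$ is handled by \eqref{160727@eq1a} with Sobolev and H\"older, giving the same bound with $2^{-N}R\sim\varepsilon$. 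Since $q<d/(d-2)$ makes $d+q(2-d)>0$, summing over $j\ge 0$ gives $\|V_\varepsilon(\cdot,y)\|_{L_q(B_R(y))}^q\lesssim R^{d+q(2-d)}$, i.e.\ \eqref{160802@eq1a}, with a bound independent of $\varepsilon$.

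For \eqref{160802@eq1b} I would run the same scheme, the one new point being that on each $A_j$ (slightly enlarged) the pair $(V_\varepsilon^{\cdot k}(\cdot,y),\Pi_\varepsilon^k(\cdot,y))$ solves the homogeneous Stokes system, since $f_{\varepsilon;y,k}$ is supported in $B_\varepsilon(y)$ while $2^{-j}R\ge 12\varepsilon$ on these annuli. Then the annular form of Lemma \ref{160920@lem2}$(a)$ combined with Lemma \ref{L51} gives $\int_{A_j}|DV_\varepsilon(\cdot,y)|^2\,dx\lesssim(2^{-j}R)^{-2}\int_{\widetilde A_j}|V_\varepsilon(\cdot,y)|^2\,dx\lesssim(2^{-j}R)^{2-d}$, so H\"older on $A_j$ gives $\|DV_\varepsilon(\cdot,y)\|_{L_q(A_j)}\lesssim(2^{-j}R)^{1-d+d/q}$, while the core ball is again controlled by \eqref{160727@eq1a} and H\"older. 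The geometric series now converges precisely when $q<d/(d-1)$, which produces \eqref{160802@eq1b}. I expect the only genuine work, as opposed to routine estimation, to be the bookkeeping: checking that every annulus in play has outer radius $\le R_0$ (so Lemma \ref{L51} applies) and inner radius above a fixed multiple of $\varepsilon$ (so one is in the homogeneous regime and Lemma \ref{160920@lem2}$(a)$ applies), and disposing of the innermost core ball around the pole, where the pointwise bound is unavailable, by the energy estimate instead.
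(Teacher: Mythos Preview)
Your argument is correct, but it is organized differently from the paper's. The paper does not decompose $B_R(y)$ into dyadic annuli; instead it passes through a weak-type (distribution-function) argument. Concretely, the paper invokes Lemma~\ref{L52} to get
\[
\int_{\bR^d\setminus B_\rho(y)}|V_\varepsilon(\cdot,y)|^{2d/(d-2)}\,dx\lesssim \rho^{-d},\qquad
\int_{\bR^d\setminus B_\rho(y)}|DV_\varepsilon(\cdot,y)|^{2}\,dx\lesssim \rho^{2-d}
\]
for all $0<\rho\le R_0$, uses these to bound the measure of the super-level sets $\{|V_\varepsilon|>t\}$ and $\{|DV_\varepsilon|>t\}$ by splitting into $B_\rho(y)$ and its complement and optimizing in $\rho$, and then integrates via the layer-cake formula. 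Your route bypasses Lemma~\ref{L52} entirely: you use the pointwise bound of Lemma~\ref{L51} on each dyadic shell and the crude energy bound \eqref{160727@eq1a} only on the innermost core, with the annular Caccioppoli inequality (Lemma~\ref{160920@lem2}$(a)$) supplying the gradient control on shells. The paper's method is a bit slicker in that $V_\varepsilon$ and $DV_\varepsilon$ are handled by the same level-set mechanism and no separate Caccioppoli step is needed (that work was already absorbed into Lemma~\ref{L52}); your method is more direct and avoids the distribution-function computation, at the cost of the extra annular Caccioppoli application and the dyadic bookkeeping you describe.
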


\begin{proof}
From the previous lemma we have for all $0 < \rho \le R_0$
\[\int_{\bR^d\setminus B_\rho(y)} |V_{\varepsilon}(\cdot,y)|^{2d/(d-2)} \,dx \lesssim \rho^{-d}.\]
Let $0<t<\infty$ and denote 
\[A(t)=\{x\in \bR^d:|V_{\varepsilon}(x,y)|>t\}.\] 
Then for all $0< \rho \le R_0$
\begin{align*}
|A(t)| 
&= |A(t) \cap B_\rho(y)| + |A(t) \setminus B_\rho(y)| \\
&\lesssim \rho^d + t^{-2d/(d-2)} \int_{A(t) \setminus B_\rho(y)} |V_{\varepsilon}(\cdot,y)|^{2d/(d-2)} \,dx \\
&\lesssim \rho^d + t^{-2d/(d-2)} \rho^{-d}.
\end{align*}
If $t \ge R_0^{2-d}$, then we can take $\rho = t^{-1/(d-2)}$ so that 
\[|A(t)| \lesssim t^{-d/(d-2)}.\]
Hence, for all $R_0^{2-d} < T < \infty$
\begin{align*}
\int_{B_R(y)}|V_{\varepsilon}(\cdot,y)|^q\,dx 
&\lesssim \int_0^\infty t^{q-1} |B_R(y) \cap A(t)| \,dt \\
&\lesssim \int_0^T t^{q-1} R^d \,dt + \int_T^\infty t^{q-1} t^{-d/(d-2)} \,dt \\
&\lesssim T^q R^d + T^{q-d/(d-2)}.
\end{align*}
In the last estimate, we have used the condition $q < d/(d-2)$.
If $0< R \le R_0$, then we can take $T = R^{2-d}$ so that 
\[\int_{B_R(y)}|V_{\varepsilon}(\cdot,y)|^q\,dx \lesssim R^{(2-d)q+d}.\]
This proves the estimate \ref{160802@eq1a}.

The proof of \eqref{160802@eq1b} is similar.
From the previous lemma we have for all $0 < \rho \le R_0$
\[\int_{\bR^d\setminus B_\rho(y)} |DV_{\varepsilon}(\cdot,y)|^2 \,dx \lesssim \rho^{2-d}.\]
Let $0<t<\infty$ and denote 
\[B(t)=\{x\in \bR^d:|DV_{\varepsilon}(x,y)|>t\}.\] 
By performing the same procedure, we can obtain \eqref{160802@eq1b}.
\end{proof}
\item
Similar to the previous lemmas, we prove uniform estimates for $\Pi_\varepsilon(\cdot,y)$.

\begin{lemma}		
\label{L54}
If Assumption \ref{ASSA} holds, then for any  $y\in \bR^d$, $0<R\le R_0$, and $\varepsilon>0$
\begin{equation}		\label{160802@eq1c}
\|\Pi_\varepsilon(\cdot,y)\|_{L_2(\bR^d\setminus B_R(y))} \lesssim_{d,\lambda,C_0,\alpha_0} R^{1-d/2}.
\end{equation}
Moreover, for any  $y\in \bR^d$, $0<R\le R_0$, and $\varepsilon>0$
\begin{equation}		\label{161122@eq4a}
\|\Pi_{\varepsilon}(\cdot,y)\|_{L_q(B_R(y))}\lesssim_{d,\lambda,C_0,\alpha_0,q} R^{1-d+d/q}, \quad q\in [1,d/(d-1)).
\end{equation}
\end{lemma}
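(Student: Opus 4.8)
The plan is to establish \eqref{160802@eq1c} first and then obtain \eqref{161122@eq4a} from it by the same layer‑cake argument that produced \eqref{160802@eq1b} from Lemma \ref{L52}. For \eqref{160802@eq1c}, the case $\varepsilon\ge R/12$ is immediate from \eqref{160727@eq1a} together with $1-d/2<0$ (here $d\ge3$), so the real work is the range $0<\varepsilon<R/12$. Fix $y$ and $k$, and pick a cutoff $\zeta\in C^\infty(\bR^d)$ with $0\le\zeta\le1$, $\zeta\equiv0$ on $B_{R/2}(y)$, $\zeta\equiv1$ on $\bR^d\setminus B_R(y)$ and $|D\zeta|\lesssim R^{-1}$; since $\zeta\equiv1$ off $B_R(y)$ it suffices to bound $\|\zeta\,\Pi_\varepsilon^k(\cdot,y)\|_{L_2(\bR^d)}$, a quantity that is finite because $\Pi_\varepsilon^k(\cdot,y)\in L_2(\bR^d)$. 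I would imitate the proof of the pressure estimate, Lemma \ref{160808@lem2}: test the equation \eqref{160727@eq1} against a vector field whose divergence is $\zeta\,\Pi_\varepsilon^k(\cdot,y)$. The point is that one cannot take an arbitrary solution of the divergence equation here, because of the mollified $\delta$‑source near $y$; instead one should take this test field \emph{explicitly}. With $\Gamma_0$ the fundamental solution of $-\Delta$ on $\bR^d$, set $w=-\nabla\bigl(\Gamma_0\ast(\zeta\,\Pi_\varepsilon^k(\cdot,y))\bigr)$. By the Calderón--Zygmund and Hardy--Littlewood--Sobolev inequalities, $w\in\rY^1_2(\bR^d)^d$, $\divg w=\zeta\,\Pi_\varepsilon^k(\cdot,y)$, and $\|Dw\|_{L_2(\bR^d)}\lesssim\|\zeta\,\Pi_\varepsilon^k(\cdot,y)\|_{L_2(\bR^d)}$.

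Testing \eqref{160727@eq1} with $\phi=w$ (legitimate by density, since $Y^1_2(\bR^d)=\rY^1_2(\bR^d)$) and using $0\le\zeta\le1$ yields
\[
\|\zeta\,\Pi_\varepsilon^k(\cdot,y)\|_{L_2(\bR^d)}^2\le\int_{\bR^d}\zeta\,|\Pi_\varepsilon^k(\cdot,y)|^2\,dx=\int_{\bR^d}A^{\alpha\beta}D_\beta V_\varepsilon^{\cdot k}(\cdot,y)\cdot D_\alpha w\,dx-\fint_{B_\varepsilon(y)}w^k\,dx.
\]
Because $\zeta\,\Pi_\varepsilon^k(\cdot,y)$ is supported in $\bR^d\setminus B_{R/2}(y)$ and $|\nabla\Gamma_0(z)|\lesssim|z|^{1-d}$, $|D^2\Gamma_0(z)|\lesssim|z|^{-d}$, the Cauchy--Schwarz inequality applied to the representation of $w$ gives, on $B_{R/4}(y)$,
\[
|w|\lesssim R^{1-d/2}\,\|\zeta\,\Pi_\varepsilon^k(\cdot,y)\|_{L_2(\bR^d)},\qquad \|Dw\|_{L_2(B_r(y))}\lesssim (r/R)^{d/2}\,\|\zeta\,\Pi_\varepsilon^k(\cdot,y)\|_{L_2(\bR^d)}\quad(0<r\le R/4),
\]
so in particular the $\delta$‑term is harmless: $\bigl|\fint_{B_\varepsilon(y)}w^k\,dx\bigr|\lesssim R^{1-d/2}\|\zeta\,\Pi_\varepsilon^k(\cdot,y)\|_{L_2(\bR^d)}$. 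For the coupling term I would split $\bR^d=(\bR^d\setminus B_{R/4}(y))\cup B_{R/4}(y)$. On the exterior piece, \eqref{161123@EQ1}, $\|Dw\|_{L_2(\bR^d)}\lesssim\|\zeta\,\Pi_\varepsilon^k(\cdot,y)\|_{L_2(\bR^d)}$ and Lemma \ref{L52} (with $R/4\le R_0$) bound the contribution by $R^{1-d/2}\|\zeta\,\Pi_\varepsilon^k(\cdot,y)\|_{L_2(\bR^d)}$. On $B_{R/4}(y)$ I would use dyadic annuli at scales $r\le R/4$: for $r\gtrsim\varepsilon$, Lemma \ref{L51} combined with the interior gradient estimate Lemma \ref{160920@lem2}$(a)$ gives $\|DV_\varepsilon^{\cdot k}(\cdot,y)\|_{L_2}\lesssim r^{1-d/2}$ on the annulus, which against $\|Dw\|_{L_2(B_r(y))}\lesssim(r/R)^{d/2}\|\cdot\|$ contributes $\lesssim (r/R)\,R^{1-d/2}\|\cdot\|$, summable; the finitely many remaining annuli inside $B_{C\varepsilon}(y)$ are absorbed using the crude bound $\|DV_\varepsilon^{\cdot k}(\cdot,y)\|_{L_2(\bR^d)}\lesssim\varepsilon^{1-d/2}$ from \eqref{160727@eq1a} and $\varepsilon<R/12$. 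Summing, $\|\zeta\,\Pi_\varepsilon^k(\cdot,y)\|_{L_2(\bR^d)}^2\lesssim R^{1-d/2}\|\zeta\,\Pi_\varepsilon^k(\cdot,y)\|_{L_2(\bR^d)}$, and dividing by the finite norm gives \eqref{160802@eq1c} after summing over $k$.

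For \eqref{161122@eq4a} I would repeat verbatim the weak-type argument used for \eqref{160802@eq1b} in the proof of Lemma \ref{L53}: \eqref{160802@eq1c} gives $\int_{\bR^d\setminus B_\rho(y)}|\Pi_\varepsilon(\cdot,y)|^2\,dx\lesssim\rho^{2-d}$ for all $0<\rho\le R_0$, hence $|\{|\Pi_\varepsilon(\cdot,y)|>t\}|\lesssim\rho^d+t^{-2}\rho^{2-d}$; optimizing in $\rho$ yields $|\{|\Pi_\varepsilon(\cdot,y)|>t\}|\lesssim t^{-d/(d-1)}$ for $t\ge R_0^{-(d-1)}$, and then splitting the layer-cake integral $\int_{B_R(y)}|\Pi_\varepsilon(\cdot,y)|^q\,dx$ at the level $t=R^{1-d}$ (admissible since $R\le R_0$) and using $q<d/(d-1)$ gives $\int_{B_R(y)}|\Pi_\varepsilon(\cdot,y)|^q\,dx\lesssim R^{q(1-d)+d}$, which is \eqref{161122@eq4a}.

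The step I expect to be the main obstacle is \eqref{160802@eq1c}, and within it the near‑pole portion of the coupling term: there one must pair $DV_\varepsilon^{\cdot k}(\cdot,y)$, whose $L_2$ norm on $B_R(y)$ need not even be finite, against $Dw$, and the only way I see to do this is to exploit that $Dw$ is not merely bounded but actually \emph{small} on $B_r(y)$ — gaining the factor $(r/R)^{d/2}$ — precisely because $\zeta\,\Pi_\varepsilon^k(\cdot,y)$ is supported a fixed distance away from $y$. This is exactly what the explicit Newtonian-potential choice of $w$ delivers, and it is also what keeps every constant independent of $R_0$.
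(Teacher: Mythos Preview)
Your proof is correct, and your derivation of \eqref{161122@eq4a} matches the paper's. For \eqref{160802@eq1c}, however, you take a genuinely different route. The paper solves $\divg\varphi=\Pi_\varepsilon^k(\cdot,y)\,\chi_{\bR^d\setminus\overline{B_R(y)}}$ abstractly via Lemma~\ref{160810@lem1} and then tests \eqref{160727@eq1} against the \emph{cut-off} field $(1-\eta)\varphi$, where $\eta\equiv1$ on $B_{R/2}(y)$; the factor $1-\eta$ annihilates the source term outright, and since $\divg\varphi=0$ on $B_R(y)$ the entire argument lives on the annulus $\cD=B_R(y)\setminus\overline{B_{R/2}(y)}$ and the exterior, where $\|DV_\varepsilon\|_{L_2}$ is already controlled by Lemma~\ref{L52}. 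No near-pole analysis is ever needed. You instead test against the uncut Newtonian potential $w$ and compensate by exploiting the explicit kernel smallness of $w$ and $Dw$ on $B_{R/4}(y)$, pairing this against $DV_\varepsilon$ via a dyadic decomposition down to scale~$\varepsilon$. The paper's device is shorter and uses only the abstract divergence lemma plus Lemma~\ref{L52}; yours trades the cutoff trick for concrete potential-theoretic bounds and a geometric sum, which is more hands-on but would also work in settings where only explicit kernels are available. As a side remark, your dyadic step can be compressed: since \eqref{160802@eq1b} with $q=1$ already gives $\|DV_\varepsilon(\cdot,y)\|_{L_1(B_{R/4}(y))}\lesssim R$, you can pair this directly against $\|Dw\|_{L_\infty(B_{R/4}(y))}\lesssim R^{-d/2}\|\zeta\Pi_\varepsilon^k\|_{L_2}$ and skip the layer-by-layer summation.
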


\begin{proof}
If $\varepsilon\ge R/2$, then one can easily check \eqref{160802@eq1c} from \eqref{160727@eq1a}.
So, we assume $\varepsilon\in (0,R/2)$.
Let $\cD$ and $\eta$ be as in the proof of Lemma \ref{L52}.
Let $\varphi\in {Y}^1_2(\bR^d)^d$ be a solution to the divergence equation 
\[\divg \varphi=\Pi_{\varepsilon}^k(\cdot,y) \chi_{\bR^d\setminus \overline{B_R(y)}} \quad \text{in }\, \bR^d\]
satisfying
\begin{equation}		\label{161010@eq1a}
\|\varphi\|_{Y^1_2(\bR^d)}\lesssim_{d} \|\Pi_{\varepsilon}^k(\cdot,y)\|_{L_2(\bR^d\setminus \overline{B_{R}(y)})}.
\end{equation}
From the definition of the averaged fundamental solution $(V_\varepsilon(\cdot,y), \Pi_\varepsilon(\cdot,y))$ with a test function $(1-\eta)\varphi$, we obtain 
\begin{equation}
\label{E51}
\begin{split}
&\int_{\bR^d}A^{\alpha\beta}D_\beta V_\varepsilon^{\cdot k}(\cdot,y)\cdot D_\alpha((1-\eta)\varphi)\,dx
- \int_{\bR^d}\Pi_\varepsilon^k(\cdot,y)\divg ((1-\eta)\varphi)\,dx \\
&= \fint_{B_\varepsilon(y)}(1-\eta)\varphi^k\,dx = 0,
\end{split}
\end{equation}
where the last equality follows from the fact that the integrand vanishes in the domain of integration.
We notice that 
\begin{equation}
\label{E52}
\begin{split}
&\int_{\bR^d}\Pi_\varepsilon^k(\cdot,y)\divg ((1-\eta)\varphi)\,dx\\
&=\int_{\bR^d}\Pi_\varepsilon^k(\cdot,y)\divg \varphi\,dx-\int_{\bR^d}\Pi_\varepsilon^k(\cdot,y)\divg (\eta\varphi)\,dx\\
&=\int_{\bR^d\setminus \overline{B_R(y)}}|\Pi^k_\varepsilon(\cdot,y)|^2\,dx
-\int_{B_R(y)}\big(\Pi^k_\varepsilon(\cdot,y)-(\Pi^k_\varepsilon(\cdot,y))_{\cD}\big)D\eta\cdot  \varphi\,dx
\end{split}
\end{equation}
due to $\divg \varphi=0$ in $B_R(y)$.
Since 
\[\cL V_{\varepsilon}^{\cdot k}(\cdot,y) + \nabla \Pi^k_\varepsilon(\cdot,y)=0 \quad \text{in }\, \cD,\]
it follows from Lemma \ref{160808@lem2} that 
\begin{equation}		\label{161010@eq1}
\int_{\cD}|\Pi_\varepsilon^k(\cdot,y)-(\Pi_{\varepsilon}^k(\cdot,y))_{\cD}|^2\,dx 
\lesssim_{d,\lambda,C_0,\alpha_0} \int_{\cD}|DV_{\varepsilon}^{\cdot k}(\cdot,y)|^2\,dx.
\end{equation}
Using Young's inequality, H\"older's inequality, \eqref{161010@eq1a},  and \eqref{161010@eq1} we obtain that 
\begin{equation}
\label{E53}
\int_{B_R(y)}\big(\Pi^k_\varepsilon(\cdot,y)-(\Pi^k_\varepsilon(\cdot,y))_{\cD}\big)D\eta\cdot  \varphi\,dx
\lesssim_{d,\lambda,C_0,\alpha_0} \int_{\cD}|DV_{\varepsilon}^{\cdot k}(\cdot,y)|^2\,dx.
\end{equation}
Similarly, using Young's inequality, H\"older's inequality, and \eqref{161010@eq1a}, we obtain that for all positive number $\varepsilon$ 
\begin{equation}
\label{E54}
\begin{split}
&\int_{\bR^d}A^{\alpha\beta}D_\beta V_\varepsilon^{\cdot k}(\cdot,y)\cdot D_\alpha((1-\eta)\varphi)\,dx \\
&\lesssim \varepsilon \int_{\bR^d} |D((1-\eta)\varphi)|^2 \,dx 
+ C_\varepsilon \int_{\bR^d\setminus \overline{B_{R/2}(y)}} |D V_\varepsilon^{\cdot k}(\cdot,y)|^2 \,dx \\
&\lesssim \varepsilon \int_{\bR^d\setminus \overline{B_R(y)}}|\Pi^k_\varepsilon(\cdot,y)|^2\,dx
+ C_\varepsilon \int_{\bR^d\setminus \overline{B_{R/2}(y)}} |D V_\varepsilon^{\cdot k}(\cdot,y)|^2 \,dx.
\end{split}
\end{equation}
By choosing a small $\varepsilon$ and combining \eqref{E51}, \eqref{E52}, \eqref{E53}, and \eqref{E54}, we get 
\[\int_{\bR^d\setminus \overline{B_R(y)}}|\Pi^k_\varepsilon(\cdot,y)|^2\,dx 
\lesssim_{d,\lambda} \int_{\bR^d\setminus \overline{B_{R/2}(y)}}|DV_\varepsilon^{\cdot k}(\cdot,y)|^2\,dx.\]
Finally, we have from \eqref{160802@eq1} 
\[\int_{\cD}|DV_{\varepsilon}^{\cdot k}(\cdot,y)|^2\,dx
\lesssim_{d,\lambda,C_0,\alpha_0} R^{2-d},\]
so we get desired estimate \eqref{160802@eq1c}.

The proof of \eqref{161122@eq4a} is very similar but using \eqref{160802@eq1c} instead of \eqref{160802@eq1}.
\end{proof}

\item
Let  $y\in \bR^d$ and $q<d/(d-1)$.
By Lemma \ref{L52}, Lemma \ref{L54} and the weak compactness, there exists functions 
\begin{align*}
V_{\operatorname{ext}}\in Y^1_2(\bR^d\setminus \overline{B_{R_0/2}(y)})^{d\times d}, &\quad V_{\operatorname{int}}\in W^1_q(B_{R_0}(y))^{d\times d}, \\
\Pi_{\operatorname{ext}}\in L_2(\bR^d\setminus \overline{B_{R_0/2}(y)}), &\quad \Pi_{\operatorname{int}}\in L_q(B_{R_0}(y))
\end{align*}
and a sequence $\{\varepsilon_\rho\}_{\rho=1}^\infty$ tending to zero such that 
\begin{equation}
\label{160821@eq1a}
\begin{split}
&V_{\varepsilon_\rho}(\cdot,y)\rightharpoonup V_{\operatorname{ext}} \quad \text{weakly in }\, Y^1_2(\bR^d\setminus \overline{B_{R_0/2}(y)}),\\
&V_{\varepsilon_\rho}(\cdot,y)\rightharpoonup V_{\operatorname{int}}\quad \text{weakly in }\, W^1_q(B_{R_0}(y)),\\
\end{split}
\end{equation}
and 
\begin{equation}
\label{160821@eq2a}
\begin{split}
&\Pi_{\varepsilon_\rho}(\cdot,y)\to \Pi_{\operatorname{ext}} \quad \text{in }\, L_2(\bR^d\setminus \overline{B_{R_0/2}(y)}),\\
&\Pi_{\varepsilon_\rho}(\cdot,y)\rightharpoonup \Pi_{\operatorname{int}}\quad \text{weakly in }\, L_q(B_{R_0}(y)).
\end{split}
\end{equation}
Oberve that $V_{\operatorname{ext}}=V_{\operatorname{int}}$ on $B_{R_0}(y)\setminus \overline{B_{R_0/2}(y)}$, and we define 
\[V(\cdot,y):=\left\{
\begin{aligned}
V_{\operatorname{ext}} &\quad \text{on }\, \bR^d\setminus \overline{B_{R_0}(y)},\\
V_{\operatorname{ext}}=V_{\operatorname{int}} &\quad \text{on }\, B_{R_0}(y)\setminus \overline{B_{R_0/2}(y)},\\
V_{\operatorname{int}} &\quad \text{on }\, B_{R_0/2}(y),
\end{aligned}
\right.\]
and similarly 
\[\Pi(\cdot,y):=\left\{
\begin{aligned}
\Pi_{\operatorname{ext}} &\quad \text{on }\, \bR^d\setminus \overline{B_{R_0}(y)},\\
\Pi_{\operatorname{ext}}=\Pi_{\operatorname{int}} &\quad \text{on }\, B_{R_0}(y)\setminus \overline{B_{R_0/2}(y)},\\
\Pi_{\operatorname{int}} &\quad \text{on }\, B_{R_0/2}(y).
\end{aligned}
\right.\]
By \eqref{160802@eq1}, \eqref{160802@eq1c}, and a diagonalization process, there exists a subsequence, still denoted by $\{\varepsilon_\rho\}_{\rho=1}^\infty$, such that 
\begin{equation}		\label{160821@eq4}
V_{\varepsilon_\rho}(\cdot,y)\rightharpoonup V(\cdot,y) \quad \text{weakly in }\, Y^1_2(\bR^d\setminus \overline{B_{r}(y)}), \quad \forall r>0,
\end{equation}
and 
\begin{equation}		\label{160821@eq4a}
\Pi_{\varepsilon_\rho}(\cdot,y)\to \Pi(\cdot,y) \quad \text{in }\, L_2(\bR^d\setminus \overline{B_{r}(y)}), \quad \forall r>0.
\end{equation}
\item
We shall show $(V,\Pi)$ satisfies the conditions in Definition \ref{D23}.
Obviously, it satisifes the condition $(a)$.\\
{\emph{Verifying $(b)$.}}
Let $y\in \bR^d$.
Since $\divg V_{\varepsilon_\rho}^{\cdot k}(\cdot,y)=0$ in $\bR^d$, by using \eqref{160821@eq1a} and \eqref{160821@eq4}, one can easily check that \eqref{160822@eq1} holds.  
To show \eqref{160821@eq5}, 
we notice from \eqref{160727@eq1} that 
\begin{align}
\nonumber
\phi^k(y)&=\lim_{\rho\to \infty}\fint_{B_{\varepsilon_\rho}(y)}\phi^k(x)\,dx\\
\label{160821@eq6}
&=\lim_{\rho\to \infty}\left(\int_{\bR^d}A^{\alpha\beta}D_\beta V^{\cdot k}_{\varepsilon_\rho}(\cdot,y)\cdot D_\alpha \phi\,dx 
- \int_{\bR^d}\Pi^k_{\varepsilon_\rho}(\cdot,y)\divg \phi\,dx\right)
\end{align}
for any $\phi\in C^\infty_0(\bR^d)^d$.
Using  \eqref{160821@eq1a} and \eqref{160821@eq4}, we have  
\begin{align}
\nonumber
&\lim_{\rho\to \infty}\int_{\bR^d}A^{\alpha\beta}D_\beta V^{\cdot k}_{\varepsilon_\rho}(\cdot,y)\cdot D_\alpha \phi\,dx\\
\nonumber
&=\lim_{\rho\to \infty}\left(\int_{B_{R_0}(y)}A^{\alpha\beta}D_\beta V^{\cdot k}_{\varepsilon_\rho}(\cdot,y)\cdot D_\alpha \phi\,dx+\int_{\bR^d\setminus B_{R_0}(y)}A^{\alpha\beta}D_\beta V^{\cdot k}_{\varepsilon_\rho}(\cdot,y)\cdot D_\alpha \phi\,dx\right)\\
\nonumber
&=\int_{B_{R_0}(y)}A^{\alpha\beta}D_\beta V^{\cdot k}(\cdot,y)\cdot D_\alpha \phi\,dx+\int_{\bR^d\setminus B_{R_0}(y)}A^{\alpha\beta}D_\beta V^{\cdot k}(\cdot,y)\cdot D_\alpha \phi\,dx\\
\label{160821@eq6a}
&=\int_{\bR^d}A^{\alpha\beta}D_\beta V^{\cdot k}(\cdot,y)\cdot D_\alpha \phi\,dx.
\end{align}
Similarly, we obtain by  \eqref{160821@eq2a} and \eqref{160821@eq4a} that 
\[\lim_{\rho\to \infty}\int_{\bR^d}\Pi^k_{\varepsilon_\rho}(\cdot,y)\divg \phi\,dx=\int_{\bR^d}\Pi^k(\cdot,y)\divg \phi\,dx.\]
From this together with \eqref{160821@eq6} and \eqref{160821@eq6a}, we get \eqref{160821@eq5}.\\
{\emph{Verifying $(c)$.}}
It suffices to prove that  \eqref{170307@eq1} holds under the assumptions \eqref{170307@eq2} and \eqref{170307@eq3}. 
Let $q_0>d$.
By the uniform estimates \eqref{160802@eq1a}, \eqref{160802@eq1b} and \eqref{161122@eq4a}, we may assume that 
\begin{equation}		\label{160827@eq1a}
\begin{aligned}
V_{\varepsilon_\rho}(\cdot,y)\rightharpoonup V(\cdot,y) &\quad \text{weakly in }\, L_{q_0/(q_0-2)}(B_{R_0}(y)),\\
DV_{\varepsilon_\rho}(\cdot,y)\rightharpoonup DV(\cdot,y) &\quad \text{weakly in }\, L_{q_0/(q_0-1)}(B_{R_0}(y)),\\
\Pi_{\varepsilon_\rho}(\cdot,y)\rightharpoonup \Pi(\cdot,y) &\quad \text{weakly in }\, L_{q_0/(q_0-1)}(B_{R_0}(y)).
\end{aligned}
\end{equation}
Let $(u,p)\in Y^1_2(\bR^d)\times L_2(\bR^d)$ be the weak solution of  \eqref{170307@eq3}.
Then by testing with $V_{\varepsilon_\rho}^{\cdot k}(\cdot,y)$ to \eqref{170307@eq3} and setting $\phi=u$ in \eqref{160727@eq1}, we have (see e.g., \eqref{160822@eq3})
\[\fint_{B_{\varepsilon_\rho}(y)}u^k\,dx=\int_{\bR^d}V^{\cdot k}_{\varepsilon_\rho}(\cdot,y)\cdot  f\,dx
-\int_{\bR^d}D_\alpha V^{\cdot k}_{\varepsilon_\rho}(\cdot,y)\cdot f_\alpha\,dx - \int_{\bR^d}\Pi^k_{\varepsilon_\rho}(\cdot,y)g\,dx.\]
Then similar to the proof of $(b)$,
by using \eqref{160821@eq4}, \eqref{160821@eq4a}, and \eqref{160827@eq1a}, we  conclude that 
\[u^k(y)=\int_{\bR^d}V^{\cdot k}(\cdot,y)\cdot  f\,dx-\int_{\bR^d}D_\alpha V^{\cdot k}(\cdot,y)\cdot f_\alpha\,dx - \int_{\bR^d}\Pi^k(\cdot,y)g\,dx,\]
which implies the identity  \eqref{170307@eq1}.\\
\item
Let us fix $y\in \bR^d$ and $R\in (0,R_0]$.
By \eqref{160802@eq1} and \eqref{160821@eq4}, we obtain for $\phi\in C^\infty_0(\bR^d)^d$ 
\begin{align*}
\left|\int_{\bR^d\setminus B_R(y)}V^{\cdot k}(\cdot,y)\cdot  \phi\,dx  \right|&=\lim_{\rho\to \infty}\left|\int_{\bR^d\setminus B_R(y)}V^{\cdot k}_{\varepsilon_\rho}(\cdot,y)\cdot \phi\,dx\right|\\
&\lesssim_{d,\lambda,C_0,\alpha_0} R^{1-d/2}\|\phi\|_{L_{2d/(d+2)}(\bR^d\setminus B_R(y))},
\end{align*}
which implies 
\[\|V^{\cdot k}(\cdot,y)\|_{L_{2d/(d-2)}(\bR^d\setminus B_R(y))} \lesssim_{d,\lambda,C_0,\alpha_0} R^{1-d/2}.\]
Using this argument together with Lemmas \ref{L52} and \ref{L54}, it is routine to check the estimates $i) - v)$ in Corollary \ref{MRC}.

To get the pointwise estimate \eqref{160822_eq1}, let $x,\,y\in \bR^d$, and $0<R:=|x-y|\le R_0$.
By the condition $(b)$ in the definition, we find that $(V^{\cdot k}(\cdot,y),\Pi^k(\cdot,y))$ satisfies 
\[
\left\{
\begin{aligned}
\divg V^{\cdot k}(\cdot,y)=0 &\quad \text{in }\, B_{R/2}(x),\\
\cL V^{\cdot k}(\cdot,y) + \nabla\Pi^k(\cdot,y)=0 &\quad \text{in }\, B_{R/2}(x).
\end{aligned}
\right.
\]
Therefore, by Lemma \ref{161121@lem7} and $i)$ in Corollary \ref{MRC},  we conclude that 
\begin{align*}
|V^{\cdot k}(x,y)|
&\lesssim R^{-d}\|V^{\cdot k}(\cdot,y)\|_{L_1(B_{R/2}(x))}\\
&\lesssim R^{1-d/2}\|V^{\cdot k}(\cdot,y)\|_{L_{2d/(d-2)}(\bR^d\setminus B_{R/2}(y))} \lesssim R^{2-d},
\end{align*}
which implies the pointwise estimate \eqref{160822_eq1}.
\item
Finally, we prove the uniqueness of the fundamental solution $(V,\Pi)$.
Let $(\tilde{V},\tilde{\Pi})$ be another pair satisfying the condition $(c)$ in Definition \ref{D23}.
By the unique solvability of Stokes system 
\[\int_{\bR^d}V(\cdot,y)^{\operatorname{tr}}f\,dx - \int_{\bR^d}\Pi(\cdot,y)g\,dx=\int_{\bR^d}\tilde{V}(\cdot,y)^{\operatorname{tr}}f\,dx - \int_{\bR^d}\tilde{\Pi}(\cdot,y)g\,dx\]
for all $f\in C^\infty_0(\bR^d)^d$ and $g\in C^\infty_0(\bR^d)$.
Thus, we should have for almost all $x,\,y\in \bR^d$ 
\[(V(x,y),\Pi(x,y))=(\tilde{V}(x,y),\tilde{\Pi}(x,y)).\]
\end{enumerate}

This completes the proof of Theorem \ref{MRA}.
\\
We end this section by giving the proof of  Corollary \ref{161122@cor1}, which is a slight modification of that of \cite[Eq. (2.5)]{arXiv:1503.07290v3}.

Let  $({}^*V, {}^*\Pi)$ and $({}^*V_{\delta}, {}^*\Pi_{\delta})$ be the  fundamental solution and the averaged fundamental solution for $\cL^*$, respectively; i.e., for $y\in \bR^d$ and $k\in \{1,\ldots,d\}$, the pair $({}^*V_{\delta}^{\cdot k}(\cdot,y), {}^*\Pi_\delta^{k}(\cdot,y))$, where ${}^*V_{\delta}^{\cdot k}$ is the $k$-th column of ${}^*V_{\delta}$, is the weak solution in ${Y}^1_2(\bR^d)^d\times L_2(\bR^d)$ of 
\begin{equation}		\label{160828@eq10}
\left\{
\begin{aligned}
\cL^* ({}^*V_{\delta}^{\cdot k}(\cdot,y)) + \nabla ({}^*\Pi^k_{\delta}(\cdot,y))=\frac{I_{B_{\delta}(y)}}{|B_\delta(y)|}e_k \quad \text{in }\, \bR^d,\\
\divg ({}^*V_{\delta}^{\cdot k}(\cdot ,y))= 0 \quad \text{in }\, \bR^d.
\end{aligned}
\right.
\end{equation}
Then $({}^*V, {}^*\Pi)$ and $({}^*V_{\delta}, {}^*\Pi_{\delta})$ satisfy counterparts of results in Theorem \ref{MRA}.

\begin{lemma}		
\label{L55}
Let $y\in \bR^d$.
For any compact set $K\subset \bR^d\setminus \{y\}$, there exist sequences $\{\varepsilon_\rho\}^\infty_{\rho=1}$ and $\{\delta_\tau\}_{\tau=1}^\infty$ tending to zero such that 
\begin{align*}
V_{\varepsilon_\rho}(\cdot,y)\to V(\cdot,y) \quad \text{uniformly on }\, K,\\
{}^*V_{\delta_\tau}(\cdot,y)\to {}^*V(\cdot,y) \quad \text{uniformly on }\, K.
\end{align*}
\end{lemma}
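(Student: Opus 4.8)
The plan is to improve the weak convergence obtained in Step~6, namely $V_{\varepsilon_\rho}(\cdot,y)\rightharpoonup V(\cdot,y)$ weakly in $Y^1_2(\bR^d\setminus\overline{B_r(y)})$ for every $r>0$, to local uniform convergence away from the pole, by feeding the $\varepsilon$-uniform energy bound of Lemma~\ref{L52} into the interior H\"older estimate of Assumption~\ref{ASSA}. Fix $y\in\bR^d$ and a compact set $K\subset\bR^d\setminus\{y\}$, put $r=\tfrac13\min\{\operatorname{dist}(K,\{y\}),R_0\}$, and let $U$ be the open $r$-neighbourhood of $K$, so that $\operatorname{dist}(U,\{y\})\ge 2r$ and $B_r(x^0)\subset U$ for every $x^0\in K$. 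For $0<\varepsilon<r$ the ball $B_r(x^0)$ is disjoint from $B_\varepsilon(y)$, hence $(V_\varepsilon^{\cdot k}(\cdot,y),\Pi_\varepsilon^k(\cdot,y))$ solves the homogeneous system \eqref{161121@EQ1} in $B_r(x^0)$ for each $k$ and each $x^0\in K$.

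First I would record the uniform bound $\|V_\varepsilon(\cdot,y)\|_{L_2(U)}\lesssim_{d,U} r^{1-d/2}$ for all $\varepsilon\in(0,r)$, which follows from Lemma~\ref{L52} together with the inclusion $U\subset\bR^d\setminus\overline{B_r(y)}$ and the boundedness of $U$ (to pass from the $L_{2d/(d-2)}$-control in the $Y^1_2$ norm to an $L_2$-bound). Applying Assumption~\ref{ASSA} for $\cL$ on the balls $B_r(x^0)$, together with Lemma~\ref{161121@lem7}$(a)$ to control the sup norm, then gives a bound on $\|V_\varepsilon^{\cdot k}(\cdot,y)\|_{C^{\alpha_0}(B_{r/2}(x^0))}$ depending only on $d,\lambda,C_0,\alpha_0,r,U$, uniformly in $\varepsilon\in(0,r)$ and $x^0\in K$. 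Covering $K$ by finitely many balls $B_{r/2}(x^0)$, $x^0\in K$, yields a bound on $\|V_\varepsilon(\cdot,y)\|_{C^{\alpha_0}(K)}$ independent of $\varepsilon\in(0,r)$. Thus the family $\{V_{\varepsilon_\rho}(\cdot,y)\}_\rho$, with $\{\varepsilon_\rho\}$ the sequence fixed in Step~6, is bounded and equicontinuous on $K$, and by the Arzel\`a--Ascoli theorem a subsequence, which I still denote $\{\varepsilon_\rho\}$, converges uniformly on $K$ to some continuous function $W$.

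To identify $W$ with $V(\cdot,y)$, note that along this subsequence the weak convergence $V_{\varepsilon_\rho}(\cdot,y)\rightharpoonup V(\cdot,y)$ in $Y^1_2(\bR^d\setminus\overline{B_r(y)})$ persists, hence $V_{\varepsilon_\rho}(\cdot,y)\to V(\cdot,y)$ weakly in $L_2(K)$; since the same sequence converges to $W$ uniformly, hence in $L_2(K)$, uniqueness of weak limits gives $W=V(\cdot,y)$ a.e.\ on $K$, and therefore everywhere on $K$ since both functions are continuous. This proves the first assertion; relabeling produces the claimed sequence. For the second assertion, Remark~\ref{R11} guarantees that ${}^*V(\cdot,y)$ and ${}^*V_\delta(\cdot,y)$ satisfy the exact analogues of Lemmas~\ref{L52} and~\ref{161121@lem7}$(a)$, and Assumption~\ref{ASSA} holds with $\cL$ replaced by $\cL^*$; running the same argument produces a sequence $\{\delta_\tau\}$ tending to zero with ${}^*V_{\delta_\tau}(\cdot,y)\to{}^*V(\cdot,y)$ uniformly on $K$. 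The only genuine point is the $\varepsilon$-uniformity of the H\"older bound, which is immediate once the $\varepsilon$-uniform $L_2$ bound of Lemma~\ref{L52} is in hand; everything else is Arzel\`a--Ascoli and uniqueness of weak limits.
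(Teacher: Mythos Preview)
Your argument is correct and follows the standard route that the paper defers to \cite[Lemma 4.4]{arXiv:1503.07290v3}: feed the $\varepsilon$-uniform energy bound of Lemma~\ref{L52} into the interior H\"older estimate of Assumption~\ref{ASSA} to obtain a uniform $C^{\alpha_0}$ bound on $K$, apply Arzel\`a--Ascoli, and identify the limit via the weak convergence already established in Step~6. The only places worth tightening are cosmetic: when you patch the local H\"older bounds on the balls $B_{r/2}(x^0)$ into a global $C^{\alpha_0}(K)$ bound you should mention the uniform $L_\infty$ bound handles pairs of points lying in different balls; and note that $V(\cdot,y)$ is continuous on $K$ because $(V(\cdot,y),\Pi(\cdot,y))$ itself solves the homogeneous system there, so Assumption~\ref{ASSA} applies to the limit as well.
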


\begin{proof}
The proof is the same as that of \cite[Lemma 4.4]{arXiv:1503.07290v3}.
\end{proof}

Now we are ready to prove the case $\Omega=\bR^d$ in Corollary \ref{161122@cor1}.
Let $x,\, y\in \bR^d$, $x\neq y$, and $k,\ell=1,\ldots,d$.
By setting $\phi={}^*V_\delta^{\cdot \ell}(\cdot,x)$ in \eqref{160727@eq1} and by using $V_{\varepsilon}^{\cdot k}(\cdot,y)$ as a test function to \eqref{160828@eq10}, we get 
\begin{equation}		\label{160906@eq1}
\Gamma_{\varepsilon,\delta}^{k\ell}:=\fint_{B_{\varepsilon}(y)}{}^*V^{k\ell}_{\delta}(\cdot,x)\,dz=\fint_{B_{\delta}(x)}V^{\ell k}_{\varepsilon}(\cdot,y)\,dz.
\end{equation}
Let $\{\varepsilon_\rho\}$ and $\{\delta_\tau\}$ be sequences in Lemma \ref{L55}.
Then by the continuity of $V_{\varepsilon_\rho}(\cdot,y)$ and Lemma \ref{L55}, we have 
\[\lim_{\rho\to \infty}\lim_{\tau\to \infty}\Gamma^{k\ell}_{\varepsilon_\rho,\delta_\tau}=\lim_{\rho\to \infty}\lim_{\tau\to \infty}\fint_{B_{\delta_\tau}(x)}V^{\ell k}_{\varepsilon_\rho}(\cdot,y)\,dz=V^{\ell k}(x,y).\]
Similarly, by the continuity of ${}^*V(\cdot,y)$ and Lemma \ref{L55}, we obtain 
\[\lim_{\rho\to \infty}\lim_{\tau\to \infty}\Gamma^{k\ell}_{\varepsilon_\rho,\delta_\tau}=\lim_{\rho\to \infty}\lim_{\tau\to \infty}\fint_{B_{\varepsilon_\rho}(y)}{}^*V^{k\ell}_{\delta_\tau}(\cdot,x)\,dz={}^*V^{k\ell}(y,x).\]
We thus have 
\begin{equation}		\label{160906@eq2}
V^{\ell k}(x,y)={}^*V^{k\ell}(y,x),
\end{equation}
which gives the identity \eqref{161120@eq1}.
We notice from \eqref{160906@eq1} and \eqref{160906@eq2} that 
\begin{align*}
V^{k\ell}_{\varepsilon}(x,y)&=\lim_{\tau\to 0}\fint_{B_{\delta_\tau}(x)}V_\varepsilon^{k\ell}(\cdot,y)\,dz=\lim_{\tau\to 0}\fint_{B_{\varepsilon}(y)}{}^*V^{k\ell}_{\delta_\tau}(\cdot,x)\,dz\\
&=\fint_{B_{\varepsilon}(y)}{}^*V^{k\ell}(\cdot,x)\,dz=\fint_{B_{\varepsilon}(y)}V^{\ell k}(x,\cdot)\,dz.
\end{align*}
This justifies why we call it the averaged fundamental solution.
Finally, the representation formula \eqref{161122@eq8} is an easy consequence of the identity \eqref{160906@eq2} and the counterpart of  \eqref{170307@eq1}.

This completes the proof of the case $\Omega=\bR^d$ in Corollary \ref{161122@cor1}.
The case of $\Omega=\bR^d_+$ can be treated in a similar way.

%=========================================================
\section{Proof of Theorem \ref{MRB}}
\label{S6}
%=========================================================

The proof is a slight modification of the proof of Theorem \ref{MRA}.
For each $\varepsilon>0$, $y\in \bR_+^d$, and $k\in \{1,\ldots,d\}$ we denote 
\[f_{\varepsilon;y,k} 
= \frac{\chi_{\bR^d_+\cap B_\varepsilon(y)}}{|\bR^d_+\cap B_\varepsilon(y)|}e_k\]
where $\chi_E$ is the characteristic function and $e_k$ is the $k$-th unit vector in $\bR^d$.
We define \emph{an averaged Green function} $(V^{\cdot k}_\varepsilon(\cdot,y), \Pi_\varepsilon^k(\cdot,y)) \in \rY^1_2(\bR^d_+)^d\times L_2(\bR^d_+)$ as the unique weak solution to the problem
\[\left\{
\begin{aligned}
\cL u+\nabla p= f_{\varepsilon;y,k} &\quad \text{in }\, \bR^d_+,\\
\divg u=0 &\quad \text{in }\, \bR^d_+.
\end{aligned}
\right.\]
Using \eqref{160721@eq2} we have 
\begin{equation}		\label{160908@eq1}
\|DV_\varepsilon(\cdot,y)\|_{L_2(\bR^d_+)}+\|\Pi_\varepsilon(\cdot,y)\|_{L_2(\bR^d_+)}\lesssim_{d,\lambda} \varepsilon^{1-d/2}, \quad \forall \varepsilon>0.
\end{equation}
Moreover, for all $x,\,y\in \bR^d_+$ and $\varepsilon>0$ satisfying 
\[0<\varepsilon\le \frac{|x-y|}{3}\le \frac{1}{2}\min\{ d_x, d_y, R_0\},\]
we obtain the pointwise estimate
\begin{equation}		\label{160908@eq1a}
|V_\varepsilon(x,y)|\lesssim_{d,\lambda,C_0,\alpha_0} |x-y|^{2-d}
\end{equation}
by repeating the same argument as in the proof of Lemma \ref{L51}.
The pointwise estimate \eqref{160908@eq1a} can also yield the following uniform estimates.

\begin{lemma}		\label{160920@lem1}
For any $y\in \bR^d_+$, $0<R\le \min\{d_y,R_0\}$, and $\varepsilon>0$, \begin{align}		
\label{160927@eq1b}
\|V_\varepsilon(\cdot,y)\|_{Y^1_2(\bR^d_+\setminus \overline{B_R(y)})}
&\lesssim_{d,\lambda, C_0,\alpha_0} R^{1-d/2},\\
\label{161018@EE2}
\|\Pi_\varepsilon(\cdot,y)\|_{L_2(\bR^d_+\setminus \overline{B_R(y)})}
&\lesssim_{d,\lambda, C_0,\alpha_0} R^{1-d/2}.
\end{align}
Moreover, for any $y\in \bR^d_+$, $0<R\le \min\{d_y,R_0\}$, and $\varepsilon>0$, 
\begin{align}
\label{160927@eq1}
\|V_\varepsilon(\cdot,y)\|_{L_q(\bR^d_+\cap B_R(y))} 
&\lesssim_{d,\lambda,C_0,\alpha_0,q} R^{2-d+d/q}, \quad q\in [1,d/(d-2)),\\
\label{160927@eq1a}
\|DV_\varepsilon(\cdot,y)\|_{L_q(\bR^d_+\cap B_R(y))} 
&\lesssim_{d,\lambda,C_0,\alpha_0,q} R^{1-d+d/q}, \quad q\in [1,d/(d-1)),\\
\label{161018@EE1}
\|\Pi_\varepsilon(\cdot,y)\|_{L_q(\bR^d_+\cap B_R(y))} 
&\lesssim_{d,\lambda,C_0,\alpha_0,q} R^{1-d+d/q}, \quad q\in [1,d/(d-1)).
\end{align}
\end{lemma}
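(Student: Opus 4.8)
The three estimates are the half-space analogues of Lemmas \ref{L52}, \ref{L53} and \ref{L54}, and the plan is to repeat those proofs with $\bR^d$ replaced by $\bR^d_+$, using three features of the present situation: on the admissible range $0<R\le \min\{d_y,R_0\}$ the ball $B_R(y)$ lies in $\bR^d_+$, so the cutoff annuli centered at $y$ at scale $R$ are interior and Lemmas \ref{160808@lem2} and \ref{160920@lem2}$(a)$ apply to them; the averaged Green function $V_\varepsilon^{\cdot k}(\cdot,y)$ lies in $\rY^1_2(\bR^d_+)^d$, hence vanishes on $\partial\bR^d_+$, so that cutoff modifications such as $(1-\eta^2)V_\varepsilon^{\cdot k}(\cdot,y)$ and $(1-\eta)\varphi$ are again admissible test functions in $\rY^1_2(\bR^d_+)^d$ and produce no boundary terms; and the Sobolev inequality of the remark after Definition \ref{D21}, together with the solvability of the divergence equation (Lemma \ref{160810@lem1}), holds on $\bR^d_+$.

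\emph{The estimate \eqref{160927@eq1b}.} I would first establish it for $0<R\le c\min\{d_y,R_0\}$ with a fixed fraction $c<1$ (any $c\le \tfrac{24}{25}$ works), and then pass to the full range: for $c\min\{d_y,R_0\}<R\le \min\{d_y,R_0\}$ one has $\bR^d_+\setminus\overline{B_R(y)}\subset \bR^d_+\setminus\overline{B_{R'}(y)}$ with $R'=c\min\{d_y,R_0\}$ and $R'<R\le c^{-1}R'$, so the bound for $R'$ implies that for $R$. For $R\le c\min\{d_y,R_0\}$ I would argue exactly as in Lemma \ref{L52}: take $\eta$ with $\eta\equiv1$ on $B_{R/4}(y)$, $\operatorname{supp}\eta\subset B_{R/2}(y)$, $|D\eta|\lesssim R^{-1}$, set $\cD=B_{R/2}(y)\setminus\overline{B_{R/4}(y)}$, and test the equation defining $V_\varepsilon^{\cdot k}(\cdot,y)$ against $(1-\eta^2)V_\varepsilon^{\cdot k}(\cdot,y)$ (the right-hand side vanishes once $\varepsilon$ is below a fixed multiple of $R$, the remaining range of $\varepsilon$ being covered directly by \eqref{160908@eq1}). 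Since $\divg\big((1-\eta^2)V_\varepsilon^{\cdot k}(\cdot,y)\big)$ has zero integral, one may subtract the mean of $\Pi_\varepsilon^k(\cdot,y)$ over $\cD$; combining this with the interior pressure estimate (Lemma \ref{160808@lem2} on $\cD$) and the interior Caccioppoli estimate (Lemma \ref{160920@lem2}$(a)$, with enlarged annulus $\cD_0=B_{5R/8}(y)\setminus\overline{B_{R/8}(y)}$) gives $\int_{\bR^d_+}(1-\eta^2)|DV_\varepsilon^{\cdot k}(\cdot,y)|^2\,dx\lesssim R^{-2}\int_{\cD_0}|V_\varepsilon^{\cdot k}(\cdot,y)|^2\,dx$. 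Finally the pointwise bound \eqref{160908@eq1a} applies on $\cD_0$ — the choice $R\le c\min\{d_y,R_0\}$, together with $\varepsilon$ small, ensures $3\varepsilon\le|x-y|\le \tfrac{3}{2}\min\{d_x,d_y,R_0\}$ for every $x\in\cD_0$ — and yields $R^{-2}\int_{\cD_0}|V_\varepsilon^{\cdot k}(\cdot,y)|^2\,dx\lesssim R^{2-d}$; the Sobolev inequality on $\bR^d_+$ then gives \eqref{160927@eq1b}.

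\emph{The $L^q$ and the pressure estimates.} Estimates \eqref{160927@eq1} and \eqref{160927@eq1a} follow from the distribution-function argument of Lemma \ref{L53}: \eqref{160927@eq1b} gives $\int_{\bR^d_+\setminus B_\rho(y)}|V_\varepsilon(\cdot,y)|^{2d/(d-2)}\,dx\lesssim \rho^{-d}$ for $0<\rho\le \min\{d_y,R_0\}$, so splitting the super-level set of $|V_\varepsilon(\cdot,y)|$ over $B_\rho(y)$ and its complement and choosing $\rho=t^{-1/(d-2)}$ for $t\ge \min\{d_y,R_0\}^{2-d}$ yields $|\{|V_\varepsilon(\cdot,y)|>t\}|\lesssim t^{-d/(d-2)}$ there; integrating in $t$ with truncation level $T=R^{2-d}$ (admissible precisely because $R\le \min\{d_y,R_0\}$) and using $q<d/(d-2)$ gives \eqref{160927@eq1}, and the same argument starting from the gradient part of \eqref{160927@eq1b} gives \eqref{160927@eq1a}. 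For \eqref{161018@EE2} I would follow Lemma \ref{L54}: assuming again $\varepsilon$ below a fixed multiple of $R$, solve $\divg\varphi=\Pi_\varepsilon^k(\cdot,y)\chi_{\bR^d_+\setminus\overline{B_R(y)}}$ in $\bR^d_+$ with $\varphi\in \rY^1_2(\bR^d_+)^d$ and $\|\varphi\|_{Y^1_2(\bR^d_+)}\lesssim \|\Pi_\varepsilon^k(\cdot,y)\|_{L_2(\bR^d_+\setminus\overline{B_R(y)})}$ (Lemma \ref{160810@lem1}), test the equation defining $V_\varepsilon^{\cdot k}(\cdot,y)$ against $(1-\eta)\varphi$ with $\eta\equiv1$ on $B_{R/2}(y)$, $\operatorname{supp}\eta\subset B_R(y)$, and use $\divg\varphi=0$ on $B_R(y)$, the identity $\int_{\bR^d_+}D\eta\cdot\varphi\,dx=0$, the interior pressure estimate (Lemma \ref{160808@lem2} on $B_R(y)\setminus\overline{B_{R/2}(y)}$), Young's and H\"older's inequalities, and absorption of a small multiple of $\int_{\bR^d_+\setminus\overline{B_R(y)}}|\Pi_\varepsilon^k(\cdot,y)|^2$ into the left-hand side, reaching $\int_{\bR^d_+\setminus\overline{B_R(y)}}|\Pi_\varepsilon^k(\cdot,y)|^2\,dx\lesssim \int_{\bR^d_+\setminus\overline{B_{R/2}(y)}}|DV_\varepsilon^{\cdot k}(\cdot,y)|^2\,dx\lesssim R^{2-d}$ by \eqref{160927@eq1b}; estimate \eqref{161018@EE1} then comes from the distribution-function argument applied to \eqref{161018@EE2}.

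The only step that is not a direct transcription of the whole-space proof is the edge case in the proof of \eqref{160927@eq1b} where $R$ is comparable to $d_y$: the enlarged annulus $\cD_0$ needed for the interior Caccioppoli estimate would then reach within a fixed fraction of $\partial\bR^d_+$, where the interior pointwise bound \eqref{160908@eq1a} is unavailable (it requires the point to be far from \emph{both} $y$ and $\partial\bR^d_+$), and this is what forces the two-stage argument with the fraction $c$ and the monotonicity reduction above. The remaining work is bookkeeping of the constraint $R\le\min\{d_y,R_0\}$, ensuring that all cutoff annuli and the truncation level $T=R^{2-d}$ lie in the ranges where the quoted interior lemmas, the pointwise bound \eqref{160908@eq1a}, and \eqref{160927@eq1b} have already been established.
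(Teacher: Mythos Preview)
Your proposal is correct and follows essentially the same route as the paper: prove \eqref{160927@eq1b} first for $R$ up to a fixed fraction of $\min\{d_y,R_0\}$ by transcribing the proof of Lemma~\ref{L52} (using the interior pointwise bound \eqref{160908@eq1a} on the enlarged annulus), extend to the full range by comparability/monotonicity, derive \eqref{161018@EE2} by the divergence-equation argument of Lemma~\ref{L54} (with Lemma~\ref{160810@lem1} on $\bR^d_+$), and obtain \eqref{160927@eq1}--\eqref{161018@EE1} via the distribution-function argument of Lemma~\ref{L53}. Your explicit identification of the ``edge case'' $R\sim d_y$ and the two-stage reduction with the fraction $c$ is exactly the point the paper handles by first taking $R\le \tfrac12\min\{d_y,R_0\}$.
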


\begin{proof}
Let $R_0':=\min\{d_y,R_0\}$.
The proof of \eqref{160927@eq1b} for $R\le R_0'/2$ is the same as the proof of \eqref{160802@eq1} by using \eqref{160908@eq1} and \eqref{160908@eq1a}.
Since $R_0'/2$ and $R_0'$ are comparable to each other, it is not hard to see that \eqref{160927@eq1b} holds for $R\in (R_0'/2, R_0']$.
Therefore, we have \eqref{160927@eq1b}.
To show \eqref{161018@EE2}, we notice from Lemma \ref{160810@lem1} that 
there exists $\varphi\in \rY^1_2(\bR^d_+)^d$ such that 
\[\divg \varphi=\Pi_{\varepsilon}^k(\cdot,y)I_{\bR^d_+\setminus \overline{B_R(y)}} \quad \text{in }\, \bR^d_+\]
satisfying 
\[\|\varphi\|_{Y^1_2(\bR^d_+)}\le N(d)\|\Pi_{\varepsilon}^k(\cdot,y)\|_{L_2(\bR^d_+\setminus \overline{B_{R}(y)})}.\]
Using this and \eqref{160927@eq1b}, one can easily obtain \eqref{161018@EE2} just following the proof of \eqref{160802@eq1c}. 
The estimates \eqref{160927@eq1} -- \eqref{161018@EE1} are deduced from \eqref{160927@eq1b} and \eqref{161018@EE2} in the same way as \eqref{160802@eq1a}, \eqref{160802@eq1b}, and \eqref{161122@eq4a} are deduced from \eqref{160802@eq1} and \eqref{160802@eq1c}.
We omit the details.
\end{proof}

The proof of Theorem \ref{MRB} is based on Lemma \ref{160920@lem1} and exactly the same argument in the proof of Theorem \ref{MRA}.
We can find the Green function  $(V,\Pi)$ satisfying the pointewise estimate in Theorem \ref{MRB} and all the estimates for $\Omega=\bR_+^d$ in Corollary \ref{MRC}.
We omit the repeated details. 

%=========================================================
\section{Proof of Theorem \ref{MRD}}
\label{S7}
%=========================================================
Suppose $A^{\alpha\beta}_0=A^{\alpha\beta}_0(x_1)$ satisfy \eqref{161123@EQ1} and denote 
\begin{equation}		\label{161125@eq1}
\cL_0 u=-D_\alpha(A^{\alpha\beta}_0D_\beta u).
\end{equation}

In the lemma below, we provide  interior $L_\infty$-estimates for $Du$ and $p$, where  $(u,p)$ is a solution of  
\begin{equation}		\label{161123@EQ2}
\cL_0u + \nabla p=0, \quad \divg u=0.
\end{equation}
The results in the following lemma were proved by Dong--Kim \cite[Section 4]{arXiv:1604.02690v2}. 
Actually, they proved $L_\infty$-estimates of $D_{x'}u$ and certain linear combinations of $Du$ and $p$. 
Using this and the argument in \cite[Section 6]{arXiv:1604.02690v2}, one can easily show $L_\infty$-estimates for $Du$ and $p$.
Here, we reproduce it for the reader's convenience by rearranging the proof in \cite{arXiv:1604.02690v2}.

\begin{lemma}		\label{161014@lem5}
If $(u,p)\in W^1_2(B_2)^d\times L_2(B_2)$ satisfies \eqref{161123@EQ2} in $B_2$, then 
\begin{equation}		\label{161014@eq6}
\|Du\|_{L_\infty(B_1)}\lesssim_{d,\lambda} \|Du\|_{L_2(B_2)}
\end{equation}
and 
\begin{equation}		\label{161014@eq6a}
\|p\|_{L_\infty(B_1)}\lesssim_{d,\lambda} \|Du\|_{L_2(B_2)} + \|p\|_{L_2(B_2)}.
\end{equation}
\end{lemma}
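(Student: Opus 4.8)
The plan is to exploit the very special structure of the coefficients $A_0^{\alpha\beta}=A_0^{\alpha\beta}(x_1)$, which depend on only one variable, by differentiating the system in the tangential directions $x'=(x_2,\dots,x_d)$. Since $\cL_0$ has coefficients independent of $x'$, each tangential derivative $D_{x'}u$ solves the \emph{same} homogeneous Stokes system \eqref{161123@EQ2} (with pressure $D_{x'}p$) in a slightly smaller ball. Because $d\ge 3$, iterating this and invoking the Sobolev embedding $W^1_2\hookrightarrow L_{2d/(d-2)}$ together with the energy (Caccioppoli) estimate from Lemma \ref{160920@lem2} and the pressure estimate of Lemma \ref{160808@lem2}, one upgrades $L_2$-control of $D_{x'}^m u$ on nested balls until Morrey/Sobolev embedding gives $\|D_{x'}u\|_{L_\infty(B_{3/2})}\lesssim_{d,\lambda}\|Du\|_{L_2(B_2)}$. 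This is exactly the part already carried out in Dong--Kim \cite[Section 4]{arXiv:1604.02690v2}, which I would cite rather than reprove.

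Next I would recover the missing components, namely $D_1 u$ and $p$. The divergence-free condition $\divg u=0$ gives $D_1 u^1=-\sum_{j\ge 2}D_j u^j$, so the first row of $D_1u$ is controlled by $\|D_{x'}u\|_{L_\infty}$ already bounded. For the remaining entries $D_1 u^i$, $i\ge 2$, and for $p$, I would use the equation itself: writing out $-D_\alpha(A_0^{\alpha\beta}D_\beta u)+\nabla p=0$ and isolating the highest-order term in $x_1$, the quantity
\[
A_0^{11}(x_1)D_1 u \;+\; (\text{terms involving } D_{x'}u \text{ and a multiple of } p\, e_1)
\]
is expressed through $x_1$-derivatives of lower-order combinations plus $D_{x'}$ of the already-controlled tangential pieces; this is the ``certain linear combinations of $Du$ and $p$'' whose $L_\infty$-bound is established in \cite[Section 4, Section 6]{arXiv:1604.02690v2}. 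Inverting the (bounded, elliptic) matrix coefficient $A_0^{11}(x_1)$ — whose inverse is bounded by $\lambda^{-1}$-type constants thanks to \eqref{161123@EQ1} — one solves algebraically for the pair $(D_1 u, p)$ pointwise in terms of quantities already bounded in $L_\infty(B_{3/2})$, which yields \eqref{161014@eq6} and \eqref{161014@eq6a} after shrinking from $B_{3/2}$ to $B_1$.

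The main obstacle is the bookkeeping in the second step: one must check that the ``good'' linear combination of $Du$ and $p$ that is amenable to the tangential-differentiation bootstrap is genuinely invertible back to $(D_1u,p)$, i.e. that the algebraic system relating $(D_1u^i,p)$ to the controlled data is nondegenerate uniformly in $x_1$. This is where the strong ellipticity \eqref{161123@EQ1} of $A_0^{11}$ enters decisively, and it is the reason the result is stated only for the Stokes system (not a general first-order manipulation). A secondary technical point is handling the pressure's defined-up-to-a-constant ambiguity: one normalizes $p$ by subtracting $(p)_{B_2}$, absorbs this into the $\|p\|_{L_2(B_2)}$ term on the right of \eqref{161014@eq6a}, and notes that \eqref{161014@eq6} for $Du$ needs no such normalization. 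Once these points are in place, the estimates follow by combining the cited interior estimates of \cite{arXiv:1604.02690v2} with Lemma \ref{160808@lem2} and Lemma \ref{160920@lem2}, exactly as the paragraph preceding the lemma indicates; I would present it as a short synthesis rather than a from-scratch derivation.
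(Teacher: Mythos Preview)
Your proposal is correct and follows essentially the same route as the paper: cite Dong--Kim for the $L_\infty$ control of $D_{x'}u$ and of the ``good'' combinations $U^i=(A_0^{1\beta})_{ij}D_\beta u^j$ (with $+\,p\,e_1$ for $i=1$), use $\divg u=0$ to get $D_1u^1$, and then use strong ellipticity of $A_0^{11}$ to recover $D_1u^j$ for $j\ge2$ and finally $p$. The only cosmetic difference is that the paper carries out the last algebraic step by multiplying $\sum_{j\ge2}(A_0^{11})_{ij}D_1u^j=\cdots$ by $D_1u^i$ and summing (coercivity of the $(d-1)\times(d-1)$ principal sub-block of $A_0^{11}$), rather than literally inverting $A_0^{11}$; the two are equivalent since \eqref{161123@EQ1} with $\xi^i_\alpha=\delta_{\alpha1}\eta^i$ shows $A_0^{11}$ is uniformly positive definite.
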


\begin{proof}
From \cite[Lemma 4.3]{arXiv:1604.02690v2}, we have 
\begin{equation}		\label{161124@eq1}
\|D_{x'}u\|_{L_\infty(B_1)}+\sum_{i=2}^d\|U^i\|_{L_\infty(B_1)}\lesssim \|Du\|_{L_2(B_2)}
\end{equation}
and
\begin{equation}		\label{161124@eq2}
\|U^1\|_{L_\infty(B_1)}\lesssim \|Du\|_{L_2(B_2)} + \|p\|_{L_2(B_2)},
\end{equation}
where 
\[U^1=(A^{1\beta}_0)_{1j}D_\beta u^j+p e_1, \quad U^i=(A^{1\beta}_0)_{ij}D_\beta u^j, \quad i=2,\ldots,d.\]
Since $\divg u=0$, we obtain from \eqref{161124@eq1} that 
\begin{equation}		\label{161014@eq6b}
\|D_1u^1\|_{L_\infty(B_1)}\lesssim \|Du\|_{L_2(B_2)}.
\end{equation}
Since
\[\sum_{j=2}^d(A^{11}_0)_{ij}D_1u^j=U^i-\sum_{j=1}^d\sum_{\beta=2}^d(A^{1\beta}_0)_{ij}D_\beta u^j-(A^{11}_0)_{i1}D_1u^1, \quad i\in\{2,\ldots,d\},\]
we multiply both sides by $D_1u^i$ and then sum over $i=2,\ldots,d$ to obtain 
\[\sum_{i,j=2}^dA^{11}_{ij}D_1u^j D_1u^i=\sum_{i=2}^d U^i D_1u^i-\sum_{j=1}^d\sum_{i,\beta=2}^d(A^{1\beta}_0)_{ij}D_\beta u^j D_1u^i-\sum_{i=2}^d (A^{11}_0)_{i1}D_1u^1D_1u^i.\]
Thus, by the ellipticity condition \eqref{161123@EQ1} and Young's inequality, we have 
\[\sum_{j=2}|D_1u^j(x)|^2\lesssim_{d,\lambda} \sum_{i=2}^d |U^i(x)|^2 + |D_{x'}u(x)|^2 + |D_1u^1(x)|^2\]
for almost all $x\in B_1$.
Taking the norm $\|\cdot \|_{L_\infty(B_1)}$ to both sides of the above inequality, and then using \eqref{161124@eq1} and  \eqref{161014@eq6b}, we get \eqref{161014@eq6}.
Finally, since 
\[p e_1=U^1-(A^{1\beta}_0)_{1j}D_\beta u^j,\]
we get \eqref{161014@eq6a} from  \eqref{161014@eq6} and \eqref{161124@eq2}.
\end{proof}

\begin{corollary}		\label{161014@cor5}
Let $0<r<R$.
If $(u,p)\in W^1_2(B_R)^d\times L_2(B_R)$ satisfies \eqref{161123@EQ2} in $B_R$,
then 
\[\|Du\|_{L_\infty(B_r)}\lesssim_{d,\lambda} (R-r)^{-d/2}\|Du\|_{L_2(B_R)}\]
and 
\[\|p\|_{L_\infty(B_r)}\lesssim_{d,\lambda} (R-r)^{-d/2}\big(\|Du\|_{L_2(B_R)} + \|p\|_{L_2(B_R)}\big).\]
\end{corollary}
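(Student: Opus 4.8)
The plan is to obtain Corollary~\ref{161014@cor5} from the fixed-scale estimate of Lemma~\ref{161014@lem5} by a translation--dilation reduction followed by a trivial covering, using crucially that the constant in Lemma~\ref{161014@lem5} depends only on $d$ and $\lambda$ and that the class of coefficients ``merely measurable in the first variable and satisfying \eqref{161123@EQ1}'' is invariant under translations and isotropic dilations.

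First I would pass to the unit scale. Fix $x_0$ and $\rho>0$ with $B_{2\rho}(x_0)\subset B_R$ and set, for $y\in B_2$, $u_\rho(y)=u(x_0+\rho y)$ and $p_\rho(y)=\rho\,p(x_0+\rho y)$; the factor $\rho$ in $p_\rho$ is dictated by matching $\nabla p$ with the second-order term under dilation. A direct computation shows that $(u_\rho,p_\rho)\in W^1_2(B_2)^d\times L_2(B_2)$ solves
\[
-D_\alpha\big(\widetilde A^{\alpha\beta}_0 D_\beta u_\rho\big)+\nabla p_\rho=0,\qquad \divg u_\rho=0\quad\text{in }B_2,
\]
where $\widetilde A^{\alpha\beta}_0(y):=A^{\alpha\beta}_0(x_{0,1}+\rho y_1)$ is again a measurable function of $y_1$ alone satisfying \eqref{161123@EQ1} with the same $\lambda$, hence of the form \eqref{161125@eq1}.

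Next I would apply Lemma~\ref{161014@lem5} to $(u_\rho,p_\rho)$ and undo the scaling via the identities $\|Du_\rho\|_{L_\infty(B_1)}=\rho\|Du\|_{L_\infty(B_\rho(x_0))}$, $\|p_\rho\|_{L_\infty(B_1)}=\rho\|p\|_{L_\infty(B_\rho(x_0))}$, $\|Du_\rho\|_{L_2(B_2)}=\rho^{1-d/2}\|Du\|_{L_2(B_{2\rho}(x_0))}$, and $\|p_\rho\|_{L_2(B_2)}=\rho^{1-d/2}\|p\|_{L_2(B_{2\rho}(x_0))}$. This turns \eqref{161014@eq6}--\eqref{161014@eq6a} into the scale-invariant estimates
\[
\|Du\|_{L_\infty(B_\rho(x_0))}\lesssim_{d,\lambda}\rho^{-d/2}\|Du\|_{L_2(B_{2\rho}(x_0))},
\]
\[
\|p\|_{L_\infty(B_\rho(x_0))}\lesssim_{d,\lambda}\rho^{-d/2}\big(\|Du\|_{L_2(B_{2\rho}(x_0))}+\|p\|_{L_2(B_{2\rho}(x_0))}\big),
\]
valid whenever $B_{2\rho}(x_0)\subset B_R$.

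To conclude, given $0<r<R$ I would choose $\rho=(R-r)/4$, so that $B_{2\rho}(x_0)\subset B_{(R+r)/2}\subset B_R$ for every $x_0\in B_r$; the two displayed estimates then hold at each such $x_0$ with the $L_2$-norms on the right bounded by the corresponding norms over $B_R$. Taking the supremum over $x_0\in B_r$ and absorbing the factor $4^{d/2}$ from $\rho^{-d/2}$ into $\lesssim_{d,\lambda}$ gives precisely the two inequalities in the statement. I do not expect any real obstacle: the argument is essentially bookkeeping, and the only points demanding a little care are the correct exponent of $\rho$ in the definition of $p_\rho$ and the remark that $\widetilde A_0$ stays in the admissible coefficient class, which is what makes the constant produced by Lemma~\ref{161014@lem5} uniform across all the rescaled problems.
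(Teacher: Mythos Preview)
Your argument is correct and matches the paper's approach: the paper's proof reads simply ``Based on Lemma~\ref{161014@lem5} with scaling and a well known argument in \cite[p.~80]{MR1239172}, one can easily obtain the desired estimates,'' which is precisely the translation--dilation reduction followed by a standard covering that you wrote out in detail. The only cosmetic difference is that the paper cites Giaquinta's iteration lemma while you use the direct pointwise covering with $\rho=(R-r)/4$; since Lemma~\ref{161014@lem5} already delivers an $L_\infty$--$L_2$ bound (not merely $L_\infty$--$L_\infty$), no iteration is actually needed and your route is the cleaner one.
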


\begin{proof}
Based on Lemma \ref{161014@lem5} with scaling and a well known argument in \cite[p. 80]{MR1239172}, one can easily obtain the desired estimates.
We omit the details.
\end{proof}

Now we are ready to prove Theorem \ref{MRD}.
We only prove the case (b) because (a) is its special case.

\begin{enumerate}[\bf{Step} 1)]
\item 
Set
\begin{equation}		\label{161125@dd1}
\omega(R):=\sup_{x\in \bR^d}\sup_{r\le R}\fint_{B_r(x)}\bigg|A^{\alpha\beta}(y_1,y')-\fint_{B_r'(x')}A^{\alpha\beta}(y_1,z')\,dz'\bigg|\,dy,
\end{equation}
where $A^{\alpha\beta}$ are coefficients of $\cL$.
Assume 
\[\omega(R_0)\le \gamma < 1\]
where $\gamma$ is a positive constant to be chosen later.
Let $(u,p)\in W^1_2(B_R(x^0))^d\times L_2(B_R(x^0))$ satisfy for $0<R\le R_0$
\[\left\{
\begin{aligned}
\cL u+\nabla p=0 &\quad \text{in }\, B_R(x^0),\\
\divg u=0 &\quad \text{in }\, B_R(x^0).
\end{aligned}
\right.\]
\item
Let $y=(y_1,y')$ and $B_{2r}(y)\subseteq B_R(x^0)$.
We denote
\[\cL_0u=-D_\alpha(A^{\alpha\beta}_0D_\beta u),\]
where 
\[A^{\alpha\beta}_0=A^{\alpha\beta}_0(x_1)=\fint_{B_r'(y')}A^{\alpha\beta}(x_1,z')\,dz'.\]
By the solvability of the Stokes system with the Dirichlet boundary condition (see, for instance, \cite[Lemme 3.1]{arXiv:1503.07290v3}), there exists a unique pair $(u_1,p_1)\in \mathring{W}^1_2(B_r(y))^d\times L_2(B_r(y))$ satisfying $\int_{B_r(y)}p_1\,dx=0$ and 
\[\left\{
\begin{aligned}
\cL_0 u_1 + \nabla p_1=-\cL u+\cL_0 u &\quad \text{in }\, B_r(y),\\
\divg u_1=0 &\quad \text{in }\, B_r(y).
\end{aligned}
\right.\]
Moreover, we have the following $L_2$-estimate:
\begin{equation}		\label{161015@eq1}
\|Du_1\|_{L_2(B_r(y))} \lesssim_{d,\lambda} \big\|(A^{\alpha\beta}-A_0^{\alpha\beta})D_\beta u\big\|_{L_2(B_r(y))}.
\end{equation}
By the reverse H\"older inequality (see  Lemma \ref{161127@lem1}), there exists a constant $q_0=q_0(d,\lambda)>2$ such that 
\begin{equation}		\label{161015@eq1a}
\left(\fint_{B_r(y)}|Du|^{q_0}\,dx\right)^{1/q_0}\lesssim_{d,\lambda} \left(\fint_{B_{2r}(y)}|Du|^2\,dx\right)^{1/2}.
\end{equation}
Applying H\"older's inequality and \eqref{161015@eq1a} to \eqref{161015@eq1}, we have 
\begin{equation}
\label{161015@eq2}
\begin{split}
\|Du_1\|_{L_2(B_r(y))} 
&\lesssim_{d,\lambda} r^{\frac{d}{2}}\left(\fint_{B_r(y)}|A^{\alpha\beta}-A^{\alpha\beta}_0|^{\frac{2q_0}{q_0-2}}\,dx\right)^{\frac{q_0-2}{2q_0}}\left(\fint_{B_r(y)}|Du|^{q_0}\,dx\right)^{\frac{1}{q_0}}\\
&\lesssim_{d,\lambda} r^{\frac{d}{2}} \gamma \left(\fint_{B_{2r}(y)}|Du|^{2}\,dx\right)^{\frac{1}{2}}\\
&\lesssim_{d,\lambda} \gamma \|Du\|_{L_2(B_{2r}(y))}.
\end{split}
\end{equation}
\item
Since $(u_2,p_2):=(u-u_1,p-p_1)$ satisfies
\[\left\{
\begin{aligned}
\cL_0 u_2 + \nabla p_2=0 &\quad \text{in }\, B_r(y),\\
\divg u_2=0 &\quad \text{in }\, B_r(y),
\end{aligned}
\right.\]
Corollary \ref{161014@cor5} implies that for $0<\rho<r$ 
\[\|Du_2\|_{L_2(B_\rho(y))}\lesssim \left(\frac{\rho}{r}\right)^{d/2}\|Du_2\|_{L_2(B_r(y))}.\]
Thus, from \eqref{161015@eq2}, we get  
\begin{equation}
\label{161015@eq2a}
\begin{split}
\|Du\|_{L_2(B_\rho(y))} 
&\le \|Du_1\|_{L_2(B_\rho(y))}+\|Du_2\|_{L_2(B_\rho(y))}\\
&\lesssim_{d,\lambda} \left(\left(\frac{\rho}{r}\right)^{d/2}+\gamma\right)\|Du\|_{L_2(B_{2r}(y))}.
\end{split}
\end{equation}
We note that it is trivially hold for $\rho\in [r,2r]$ and $B_{2r}(y)\subseteq B_R(x^0)$.
Let $B_r(y)\subseteq B_R(x^0)$ and $\alpha_0\in (0,1)$.
We can take $\gamma = \tau^{d/2}$ and choose a sufficiently small $\tau(d, \lambda, \alpha_0)\in (0,1)$ so that 
\[\|Du\|_{L_2(B_{\tau r}(y))} \le \tau^{\frac{d}{2}-1+\alpha_0}\|Du\|_{L_2(B_r(y))}.\]
Hence, by an iteration, we obtain that for $0<\rho<r$ 
\begin{equation}		
\label{161015@eq5}
\|Du\|_{L_2(B_{\rho}(y))} \lesssim_{d,\lambda,\alpha_0} \left(\frac{\rho}{r}\right)^{\frac{d}{2}-1+\alpha_0}\|Du\|_{L_2(B_r(y))}.
\end{equation}
\item
From \eqref{161015@eq5} we have for $y\in B_{R/4}(x^0)$ and $\rho\in (0, R/4)$ 
\begin{align*}
\|Du\|_{L_2(B_{\rho}(y))} 
&\lesssim_{d,\lambda,\alpha_0} \left(\frac{\rho}{R}\right)^{\frac{d}{2}-1+\alpha_0}\|Du\|_{L_2(B_{R/4}(y))}\\
&\lesssim_{d,\lambda,\alpha_0} \left(\frac{\rho}{R}\right)^{\frac{d}{2}-1+\alpha_0}\|Du\|_{L_2(B_{R/2}(x^0))}.
\end{align*}
From \eqref{160920@lem2}, we get 
\[\|Du\|_{L_2(B_{\rho}(y))} \lesssim_{d,\lambda,\alpha_0} \frac{\rho^{\frac{d}{2}-1+\alpha_0}}{R^{\frac{d}{2}+\alpha_0}}\|u\|_{L_2(B_R(x^0))}.\]
Therefore, the Morrey-Campanato theorem yields 
\begin{equation*}		
[u]_{C^{\alpha_0}(B_{R/4}(x^0))}\le C R^{-\frac{d}{2}-\alpha_0}\| u\|_{L_2(B_R(x^0))}.
\end{equation*}  
Finally, a standard covering argument yields
\[[u]_{C^{\alpha_0}(B_{R/2}(x^0))}\lesssim_{d,\lambda,\alpha_0} R^{-\alpha_0}\left(\fint_{B_R(x^0)}| u|^2\,dx\right)^{1/2}, \quad 0<R\le R_0.\]
\end{enumerate}
This completes the proof of Theorem \ref{MRD}.

%=========================================================
\section{Proof of Theorem \ref{MRE}}
\label{S8}
%=========================================================

The proof of the estimate \eqref{161019@eq5} is a modification of the argument for elliptic systems found in Kang--Kim \cite[Theorem 3.3]{ MR2718661}.
We divide the proof into several steps.

\begin{enumerate}[\bf{Step} 1)]
\item
Let  $x,\,y\in \bR^d_+$ and $0<R:=|x-y|\le \min\{R_0,R_1\}$.
We note that 
$(V^{\cdot k}(\cdot,y), \Pi^k(\cdot,y))$ satisfies 
\[\left\{
\begin{aligned}
\cL V^{\cdot k}(\cdot,y) + \nabla \Pi^k(\cdot,y)= 0&\quad \text{in }\, \bR^d_+\cap B_{R/2}(x),\\
\divg V^{\cdot k}(\cdot,y)=0 &\quad \text{in }\, \bR^d_+\cap B_{R/2}(x),\\
V^{\cdot k}(\cdot,y)=0 &\quad \text{on }\, \partial \bR^d_+.
\end{aligned}
\right.\]
If $d_x> R/8$, then since $B_{R/8}(x)\subset \bR^d_+$, by Lemma \ref{161121@lem7} $(a)$, we have
\begin{align}		
\nonumber
|V^{\cdot k}(x,y)| 
&\lesssim_{d,C_0, \alpha_0} R^{-d}\|V^{\cdot k}(\cdot,y)\|_{L_1(B_{R/8}(x))}\\
\label{161025@eq7}
&\lesssim_{d,C_0, \alpha_0} R^{-d}\|V^{\cdot k}(\cdot,y)\|_{L_1(\bR^d_+\cap B_{R/2}(x))}.
\end{align}
If $d_x\le R/8$, then we take $x^0\in \partial\bR^d_+$ satisfying $\operatorname{dist}(x,\partial \bR^d_+)=|x-x^0|$ so that 
\[x\in B_{3R/16}^+(x^0)\subset B_{3R/8}^+(x^0)\subset (\bR^d_+\cap B_{R/2}(x)).\]
By Lemma \ref{161121@lem7} $(b)$ 
\begin{align}		
\nonumber
|V^{\cdot k}(x,y)|&\lesssim_{d,C_0,\alpha_0,C_1} R^{-d}\|V^{\cdot k}(\cdot,y)\|_{L_1(B_{3R/8}^+(x^0))}\\
\label{161025@eq7a}
&\lesssim_{d,C_0,\alpha_0,C_1} R^{-d}\|V^{\cdot k}(\cdot,y)\|_{L_1(\bR^d_+\cap B_{R/2}(x))}.
\end{align}
Combining \eqref{161025@eq7} and \eqref{161025@eq7a}, we obtain \begin{equation}		\label{161025@eq3}
|V(x,y)|\lesssim_{d,C_0,\alpha_0,C_1} R^{-d}\|V(\cdot,y)\|_{L_1(\bR^d_+\cap B_{R/2}(x))}.
\end{equation}
\item
We now prove the estimate \eqref{161019@eq5}.
Let  $x,\,y\in \bR^d_+$ and $0<R:=|x-y|\le \min\{R_0,R_1\}$.
If $(u, p)\in \rY^1_2(\bR^d_+)^d\times L_2(\bR^d_+)$ satisfies
\[\left\{
\begin{aligned}
\cL^* u + \nabla p=f &\quad \text{in }\, \bR^d_+,\\
\divg u=0 &\quad \text{in }\, \bR^d_+,
\end{aligned}
\right.\]
where $f\in L_\infty(\bR^d_+)^d$ with $\operatorname{supp}f\subset (\bR^d_+\cap B_{R/2}(x))$,
then by the condition $(c)$ in Definition \ref{D23}, we have  
\begin{equation}		\label{161025@eq9}
u(y)=\int_{\bR^d_+\cap B_{R/2}(x)}V(z,y)^{\operatorname{tr}}f(z)\,dz.
\end{equation}
Moreover, since  
\[\left\{
\begin{aligned}
\cL^* u + \nabla p=0 &\quad \text{in }\, \bR^d_+\cap B_{R/2}(y),\\
\divg u=0 &\quad \text{in }\, \bR^d_+\cap B_{R/2}(y),\\
u=0 &\quad \text{on }\, \partial \bR^d_+,
\end{aligned}
\right.\]
we obtain that (see \eqref{161025@eq3})
\[\|u\|_{L_\infty(B_{R/16}(y))} \lesssim R^{-d}\|u\|_{L_1(\bR^d_+\cap B_{R/2}(y))}.\]
From this together with \eqref{160721@eq2}, we get 
\begin{align*}
\|u\|_{L_\infty(B_{R/16}(y))} 
&\lesssim_{d,\lambda,C_0,\alpha_0,C_1} R^{1-d/2}\|u\|_{L_{2d/(d-2)}(\bR^d_+\cap B_{R/2}(y))} \\
&\lesssim_{d,\lambda,C_0,\alpha_0,C_1} R^2\|f\|_{L_\infty(\bR^d_+\cap B_{R/2}(x))}.
\end{align*}
Combining this and \eqref{161025@eq9}, and then using the duality argument, we obtain 
\[\|V(\cdot,y)\|_{L_1(\bR^d_+\cap B_{R/2}(x))} \lesssim R^2,\]
which together with \eqref{161025@eq3} implies the desired estimate \eqref{161019@eq5}.
\item

To show estimates $i)$ -- $v)$ in Theorem \ref{MRE}, due to Corollary \ref{MRC},
we may consider only the case $y\in \bR^d_+$ and  
\[16d_y\le {R}\le \min\{R_0,R_1\}.\]
Take $y^0\in \partial \bR^d_+$ satisfying $\operatorname{dist}(y,\partial \bR^d_+)=|y-y^0|$.
Then 
\[\big(\bR^d_+\cap B_{R/16}(y)\big)\subset  B_{R/8}^+(y^0)\subset B_{R/2}^+(y^0)\subset (\bR^d_+\setminus B_R(y)).\]
Let $\eta$ be a smooth functions on $\bR^d$ satisfying
\[0\le \eta\le 1, \quad \eta\equiv 1 \,\text{ on }\, B_{R/4}(y^0), \quad \operatorname{supp}\eta\subset B_{R/2}(y^0), \quad |D\eta|\lesssim R^{-1}.\]
Like the estiamte \eqref{160807@eq1}, we have 
\begin{align*}
&\left|\int_{\bR^d_+}\Pi^k(\cdot,y)\divg \big((1-\eta^2)V^{\cdot k}(\cdot,y)\big)\,dx\right|\\
&\lesssim \int_{\cD^+}\big|\Pi^k(\cdot,y)-(\Pi^k(\cdot,y))_{\cD^+}\big|^2\,dx 
+ R^{-2}\int_{\cD^+}|DV^{\cdot k}(\cdot,y)|^2\,dx,
\end{align*}
where $\cD^+=B_{R/2}^+(y^0)\setminus \overline{B_{R/4}(y^0)}$.
Like the estimate \eqref{160809@eq1}, we have, by using Lemma \ref{160920@lem2} $(b)$,
\begin{equation}		\label{160929@eq1}
\int_{\bR^d_+}(1-\eta^2)|DV^{\cdot k}(\cdot,y)|^2\,dx 
\lesssim R^{-2} \int_{\cD_0^+}|V^{\cdot k}(\cdot,y)|^2\,dx,
\end{equation}
where $\cD^+_0=B^+_{5R/8}(y^0)\setminus \overline{B_{R/8}(y^0)}$.
Since
\[|x-y|\le \frac{5R}{8}, \quad \forall x\in \cD^+_0,\]
we apply \eqref{161019@eq5} to \eqref{160929@eq1} and then follow the same steps used in the proof of \eqref{160802@eq1}, we obtain the estimate $i)$.
The proof of $ii)$ and $iii)$ are the same as that of Lemma \ref{L53}.

We shall sketch the proof of $iv)$, which is similar to the proof of Lemma \ref{L54}.
Let $\varphi \in \rY^1_2(\bR^d_+)^d$ be a solution to the divergence equation
\[\divg \varphi=\Pi^k(\cdot,y)I_{\bR^d_+\setminus \overline{B_{R/2}(y^0)}} \quad \text{in }\, \bR^d_+\]
satisfying 
\[\|\varphi\|_{Y^1_2(\bR^d_+)} \lesssim_{d} \|\Pi^k(\cdot,y)\|_{L_2(\bR^d_+\setminus \overline{B_{R/2}(y^0)})}.\]
Using $(1-\eta)\varphi$ as a test function, we obtain 
\begin{equation}
\label{161011@eq1a}
\begin{split}
\int_{\bR^d_+\setminus \overline{B_{R/2}(y^0)}}|\Pi^k(\cdot,y)|^2\,dx
&\lesssim \int_{\cD^+}\big|\Pi^k(\cdot,y)-(\Pi^k(\cdot,y))_{\cD^+}\big|^2\,dx \\
&\quad + \int_{\bR^d_+\setminus \overline{B_{R/4}(y^0)}}|DV^k(\cdot,y)|^2\,dx.
\end{split}
\end{equation}
Since 
\[\cL V_\varepsilon^{\cdot k}(\cdot,y)+\nabla \Pi^k_\varepsilon(\cdot,y)=0 \quad \text{in }\, \cD^+,\]
it follows from Lemma \ref{160808@lem2} that 
\begin{equation}		\label{161011@eq2}
\int_{\cD^+}\big|\Pi^k(\cdot,y)-(\Pi^k(\cdot,y))_{\cD^+}\big|^2\,dx 
\lesssim \int_{\cD^+}|DV^{\cdot k}(\cdot,y)|^2\,dx.
\end{equation}
Note that $\cD^+\subset (\bR^d_+\setminus \overline{B_{R/8}(y)})$.
Combining \eqref{161011@eq1a} and \eqref{161011@eq2} we obtain 
\[\int_{\bR^d_+\setminus \overline{B_{R/2}(y^0)}}|\Pi^k_\varepsilon(\cdot,y)|^2\,dx 
\lesssim \int_{\bR^d_+\setminus \overline{B_{R/8}(y)}}|DV_\varepsilon^{\cdot k}(\cdot,y)|^2\,dx.\]
Thus, the desired estimate $iv)$ follows from $i)$.
We omit the proof of $v)$ because it is very similar.
\end{enumerate}
This completes the proof of Theorem \ref{MRE}.

%=========================================================
\section{Proof of Theorem \ref{MRF}}
\label{S9}	
%=========================================================

\begin{lemma}		\label{161125@lem1}
Let $\cL_0$ be the operator  in \eqref{161125@eq1} and let $0<r<R$.
If $(u,p)\in W^1_2(B_R^+)^d\times L_2(B_R^+)$ satisfies
\[\left\{
\begin{aligned}
\cL_0 u + \nabla p=0 &\quad \text{in }\, B_R^+,\\
\divg u=0 &\quad \text{in }\, B_R^+,\\
u=0 &\quad \text{on }\,  B_R \cap \partial \bR^d_+,
\end{aligned}
\right.\]
then 
\begin{equation}		\label{161125@eq1a}
\|Du\|_{L_\infty(B_r^+)}\lesssim_{d,\lambda} (R-r)^{-d/2}\|Du\|_{L_2(B_R^+)}
\end{equation}
and 
\begin{equation}		\label{161125@eq1b}
\|p\|_{L_\infty(B_r^+)}\lesssim_{d,\lambda} (R-r)^{-d/2}\big(\|Du\|_{L_2(B_2^+)}+\|p\|_{L_2(B_2^+)}\big).
\end{equation}
\end{lemma}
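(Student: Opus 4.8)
The plan is to reproduce the proof of Lemma~\ref{161014@lem5} with balls replaced by half-balls, exploiting in addition the homogeneous Dirichlet condition on $\{x_1=0\}$. By scaling and the same covering/iteration argument that produces Corollary~\ref{161014@cor5} from Lemma~\ref{161014@lem5} (half-balls centered at interior points of $B_r^+$ being handled by Corollary~\ref{161014@cor5} itself, those centered on $\partial\bR^d_+$ by a rescaled base estimate), it suffices to prove, for $(u,p)\in W^1_2(B_2^+)^d\times L_2(B_2^+)$ solving the stated problem in $B_2^+$,
\[
\|Du\|_{L_\infty(B_1^+)}\lesssim_{d,\lambda}\|Du\|_{L_2(B_2^+)},\qquad \|p\|_{L_\infty(B_1^+)}\lesssim_{d,\lambda}\|Du\|_{L_2(B_2^+)}+\|p\|_{L_2(B_2^+)}.
\]

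The essential ingredient is the boundary analogue of \eqref{161124@eq1}--\eqref{161124@eq2}: with $U^1=(A^{1\beta}_0)_{1j}D_\beta u^j+pe_1$ and $U^i=(A^{1\beta}_0)_{ij}D_\beta u^j$ for $i=2,\ldots,d$ (the same flux quantities as in the proof of Lemma~\ref{161014@lem5}),
\[
\|D_{x'}u\|_{L_\infty(B_1^+)}+\sum_{i=2}^d\|U^i\|_{L_\infty(B_1^+)}\lesssim_{d,\lambda}\|Du\|_{L_2(B_2^+)},\qquad \|U^1\|_{L_\infty(B_1^+)}\lesssim_{d,\lambda}\|Du\|_{L_2(B_2^+)}+\|p\|_{L_2(B_2^+)}.
\]
These are precisely the boundary estimates of Dong--Kim \cite[Section~4]{arXiv:1604.02690v2}: since $A^{\alpha\beta}_0$ depends on $x_1$ only, differentiating the system tangentially preserves its structure and sends the Dirichlet datum to $D_{x'}u=0$ on $B_2\cap\partial\bR^d_+$, while the flux components $U^i$ ($i\ge2$) and $U^1$ are weakly differentiable in $x_1$ with $D_1U^i$, $D_1U^1$ given by tangential derivatives of lower-order terms; this is exactly the setting in which their boundary $L_\infty$-machinery applies up to the flat boundary.

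Granting this, the conclusion follows by the same elementary manipulation as in the interior case. From $\divg u=0$ we get $D_1u^1=-\sum_{j=2}^dD_ju^j$, so $\|D_1u^1\|_{L_\infty(B_1^+)}\lesssim\|D_{x'}u\|_{L_\infty(B_1^+)}$. For $i=2,\ldots,d$,
\[
\sum_{j=2}^d(A^{11}_0)_{ij}D_1u^j=U^i-\sum_{j=1}^d\sum_{\beta=2}^d(A^{1\beta}_0)_{ij}D_\beta u^j-(A^{11}_0)_{i1}D_1u^1;
\]
multiplying by $D_1u^i$, summing over $i$, and invoking the ellipticity condition \eqref{161123@EQ1} with Young's inequality gives $\sum_{j=2}^d|D_1u^j|^2\lesssim_{d,\lambda}\sum_{i=2}^d|U^i|^2+|D_{x'}u|^2+|D_1u^1|^2$ a.e.\ in $B_1^+$. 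Taking $L_\infty(B_1^+)$ norms and combining with the bounds just stated yields \eqref{161125@eq1a} (after undoing the reduction); finally $pe_1=U^1-(A^{1\beta}_0)_{1j}D_\beta u^j$ gives \eqref{161125@eq1b}.

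I expect the only genuinely nontrivial point to be the boundary $L_\infty$-bounds in the second paragraph; once they are available, the rest (scaling, the covering argument as in Corollary~\ref{161014@cor5}, and the linear algebra controlled by $\lambda$) is routine. If one does not wish to quote \cite{arXiv:1604.02690v2} directly, reproducing that step requires a tangential difference-quotient argument placing $D_{x'}u$ in $W^1_{2,\operatorname{loc}}$ with zero Dirichlet data on $\{x_1=0\}$, a boundary version of Moser iteration for the tangential-derivative system, and reading off $\|U^i\|_{L_\infty}$ from the one-dimensional (in $x_1$) equations satisfied by the fluxes --- which is the content of \cite[Section~4]{arXiv:1604.02690v2}, after which the argument in the proof of Lemma~\ref{161014@lem5} carries over verbatim.
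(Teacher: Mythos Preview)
Your proposal is correct and follows essentially the same approach as the paper: the paper's proof simply cites \cite[Lemma~4.4]{arXiv:1604.02690v2} (the boundary analogue of the interior estimate \cite[Lemma~4.3]{arXiv:1604.02690v2} used in Lemma~\ref{161014@lem5}) and then says that repeating the arguments of Lemma~\ref{161014@lem5} and Corollary~\ref{161014@cor5} yields the result, omitting all details. You have supplied exactly those details---the flux decomposition and ellipticity argument of Lemma~\ref{161014@lem5} transplanted to half-balls, and the scaling/covering reduction of Corollary~\ref{161014@cor5}---so your write-up is a faithful expansion of the paper's sketch.
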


\begin{proof}
Using \cite[Lemma 4.4]{arXiv:1604.02690v2} and repeating the same arguments in the proofs of Lemma \ref{161014@lem5} and Corollary \ref{161014@cor5}, one can easily show that the estimates \eqref{161125@eq1a} and \eqref{161125@eq1b} hold.
We omit the details.
\end{proof}

We note that the following lemma is well known (see, for instance,  \cite{MR641818}). 
We present that for the sake of completeness.

\begin{lemma}[Reverse H\"older inequality]		\label{161127@lem1}
Let $\Omega_R(x^0)=\bR^d_+\cap B_R(x^0)$ with $x^0\in \bR^d_+$ and $R>0$.
If $(u, p)\in W^1_2(\Omega_R(x^0))^d\times L_2(\Omega_R(x^0))$ satisfies
\[\left\{
\begin{aligned}
\cL u + \nabla p=0 &\quad \text{in }\, \Omega_R(x^0),\\
\divg u=0 &\quad \text{in }\, \Omega_R(x^0),\\
u=0 &\quad \text{on }\,  B_R(x^0) \cap \partial \bR^d_+,
\end{aligned}
\right.\]
then there exists a constant $q_0=q_0(d,\lambda)>2$ such that 
\begin{equation}		\label{161127@eq1}
\left(\fint_{\Omega_{R/2}(x^0)}|Du|^{q_0}\,dx\right)^{1/q_0}
\lesssim_{d,\lambda} \left(\fint_{\Omega_R(x^0)}|Du|^2\,dx\right)^{1/2}.
\end{equation}
\end{lemma}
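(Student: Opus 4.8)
The plan is to derive a reverse Hölder inequality for $Du$ on all small balls, including those meeting the flat boundary, and then upgrade it by a Gehring–Giaquinta–Modica self-improving lemma. Fix $x^0\in\bR^d_+$ and write $\Omega_r(z):=\bR^d_+\cap B_r(z)$. The first ingredient is a Caccioppoli inequality in which the pressure has already been removed: for a ball $B_\rho(z)$ with $B_{2\rho}(z)\subset B_R(x^0)$ and $\rho$ a sufficiently small fraction of $R$,
\[
\int_{B_\rho(z)}|Du|^2\,dx \lesssim_{d,\lambda} \rho^{-2}\int_{B_{2\rho}(z)}|u-c|^2\,dx ,
\]
with $c=(u)_{B_{2\rho}(z)}$ when $B_{2\rho}(z)\subset\bR^d_+$; when instead $B_{2\rho}(z)$ meets $\partial\bR^d_+$, one passes to the half-balls $B^+_{3\rho}(z^0)\subset B^+_{8\rho}(z^0)$ about the nearest boundary point $z^0$ and takes $c=0$, using that $u$ vanishes on $\partial\bR^d_+\cap B_R(x^0)$. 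Since constants are divergence free and lie in the kernel of $\cL$, the pair $(u-c,p)$ still solves $\cL(u-c)+\nabla p=0$, $\divg(u-c)=0$, so the displayed inequality is exactly Lemma \ref{160920@lem2} — part $(a)$ in the interior case, part $(b)$ in the boundary case — applied to $(u-c,p)$ after rescaling; the pressure has already been absorbed there via the pressure estimate of Lemma \ref{160808@lem2} together with a hole-filling iteration.

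Next I combine this with the Sobolev–Poincaré inequality, whose Sobolev conjugate of $2_*:=2d/(d+2)$ is exactly $2$: in the interior case $\big(\fint_{B_{2\rho}(z)}|u-(u)_{B_{2\rho}(z)}|^2\,dx\big)^{1/2}\lesssim \rho\big(\fint_{B_{2\rho}(z)}|Du|^{2_*}\,dx\big)^{1/2_*}$, while on half-balls the same estimate holds for functions vanishing on the flat face. Tracking the dimensional normalisation constants yields, for every admissible ball,
\[
\Big(\fint_{B_\rho(z)}|Du|^2\,dx\Big)^{1/2} \lesssim_{d,\lambda} \Big(\fint_{B_{2\rho}(z)}|Du|^{2_*}\,dx\Big)^{1/2_*} ,
\]
together with its half-ball analogue. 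A routine covering argument then gives a genuine reverse Hölder estimate valid for all intrinsic balls $\Omega_{2\rho}(z)$ with $z\in\Omega_{3R/4}(x^0)$ and $\rho$ small: balls whose doubles avoid $\partial\bR^d_+$ fall under the interior version, and balls whose doubles meet $\partial\bR^d_+$ are absorbed into a half-ball centred at the projected point, which is licit precisely because $u=0$ on $\partial\bR^d_+\cap B_R(x^0)$.

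Finally, set $g:=|Du|^{2_*}$ and $p_1:=2/2_*=(d+2)/d>1$; the previous display reads $\fint_{\Omega_\rho(z)}g^{p_1}\lesssim_{d,\lambda}\big(\fint_{\Omega_{2\rho}(z)}g\big)^{p_1}$ with no forcing term. The Gehring–Giaquinta–Modica lemma in its form valid up to a flat boundary (see \cite{MR641818}, or \cite{MR1239172}) produces an exponent $p_1'>p_1$ depending only on $d$ and $\lambda$ and the bound $\big(\fint_{\Omega_{R/2}(x^0)}g^{p_1'}\big)^{1/p_1'}\lesssim_{d,\lambda}\fint_{\Omega_R(x^0)}g$; taking $q_0:=2_*p_1'>2$ and applying Hölder's inequality on the right-hand side to pass from the $L_{2_*}$-mean to the $L_2$-mean gives \eqref{161127@eq1}. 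The only genuinely delicate points are the geometric bookkeeping for balls that straddle $\partial\bR^d_+$ — ensuring the half-ball one reduces to still sits inside $\Omega_R(x^0)$ — and invoking the boundary form of Gehring's lemma; the pressure term, which is the usual source of difficulty for the Stokes system, causes none here because it was already dealt with inside Lemma \ref{160920@lem2}.
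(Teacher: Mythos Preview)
Your proposal is correct and follows essentially the same route as the paper: a Caccioppoli estimate (Lemma \ref{160920@lem2}) combined with Sobolev--Poincar\'e at the subcritical exponent $2_*=2d/(d+2)$, a case split between interior balls and balls meeting $\partial\bR^d_+$, and then Gehring's self-improving lemma. The only cosmetic differences are that the paper extends $u$ by zero to work with full balls throughout and interposes a H\"older interpolation step (between $L_{2d/(d-2)}$ and $L_{2_*}$) that produces an additive $\delta$-term before invoking Gehring, whereas you obtain the homogeneous reverse H\"older inequality directly.
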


\begin{proof}
Throughout the proof, we regard $u$ as a function in $W^1_2(B_R)$ by setting $u=0$ in $B_R\setminus B_R^+$.
Set $q_1=2d/(d+2)$ and $U=|Du|^{q_1}$.
We claim that for any $y\in B_R(x^0)$, $0<r\le \operatorname{dist}(y,\partial B_R(x^0))$, and $0<\delta<1$, we have
\begin{equation}		\label{170313@eq1}
\fint_{B_{r/12}(y)}U^{2/q_1}\,dx 
\le \delta\fint_{B_r(y)}U^{2/q_1}\,dx 
+ C(d,\lambda,\delta) \left(\fint_{B_r(y)}U\,dx\right)^{2/q_1}.
\end{equation}
Let $y\in B_R(x^0)$ and $0<r\le \operatorname{dist}(y,\partial B_R(x^0))$.
We consider two cases when  $r/6\le \operatorname{dist}(y,\partial \bR^d_+)$ and $r/6>\operatorname{dist}(y,\partial \bR^d_+)$.
Assume that  $r/6\le \operatorname{dist}(y,\partial \bR^d_+)$.
Since it holds that  
\[\left\{
\begin{aligned}
\cL \big(u-(u)_{B_{r/6}(y)}\big) + \nabla p=0 &\quad \text{in }\, B_{r/6}(y),\\
\divg \big(u-(u)_{B_{r/6}(y)}\big)=0 &\quad \text{in }\, B_{r/6}(y),
\end{aligned}
\right.\]
by Lemma \ref{160920@lem2}, H\"older's inequality, and Poincar\'e's inequality, we have 
\begin{align*}
\|Du\|_{L_2(B_{r/12}(y))} 
&\lesssim r^{-1}\big\|u-(u)_{B_{r/6}(y)}\big\|_{L_2(B_{r/6}(y))}\\
&\lesssim r^{-1}\big\|u-(u)_{B_{r/6}(y)}\big\|_{L_{2d/(d-2)}(B_{r/6}(y))}\big\|u-(u)_{B_{r/6}(y)}\big\|_{L_{q_1}(B_{r/6}(y))}\\
&\lesssim \|Du\|_{L_2(B_r(y))}\|Du\|_{L_{q_1}(B_r(y))}.
\end{align*}
Using this together with Young's inequality, we obtain the estimate \eqref{170313@eq1}.
If $r/6>\operatorname{dist}(y,\partial \bR^d_+)$, then we take $y^0\in \partial \bR^d_+\cap B_{r/6}(y)$ satisfying $\operatorname{dist}(y^0,\partial \bR^d_+)=|y-y^0|$.
Since 
\[
B_{r/6}(y)\subset B_{r/3}(y^0)\subset B_{2r/3}(y^0)\subset B_r(y),
\]
Then by Lemma \ref{160920@lem2}, H\"older's inequality, and Poincar\'e's inequality (see, for instance, \cite[Eq. (7.45), p. 164]{MR1814364}), we have 
\begin{align*}
\|Du\|_{L_2(B_{r/6}(y))} &\le \|Du\|_{L_2(B_{r/3}(y^0))}\lesssim r^{-1}\|u\|_{L_2(B_{2r/3}(y^0))}\\
&\lesssim r^{-1}\|u\|_{L_{2d/(d-2)}(B_{2r/3}(y^0))}\|u\|_{L_{q_1}(B_{2r/3}(y^0))}\\
&\lesssim \|Du\|_{L_2(B_r(y))}\|Du\|_{L_{q_1}(B_r(y))}.
\end{align*}
Using this together with Young's inequality, we obtain the estimate \eqref{170313@eq1}.

We are now ready to prove the lemma.
By \eqref{170313@eq1} and a standard covering argument, we see that 
\[\fint_{B_{r/2}(y)}U^{2/q_1}\,dx 
\le \frac{1}{2}\fint_{B_r(y)}U^{2/q_1}\,dx 
+ C(d,\lambda) \left(\fint_{B_r(y)}U\,dx\right)^{2/q_1}\]
for any $B_r(y)\subset B_R(x^0)$.
Therefore, applying a version of Gehring's lemma (see, for instance, \cite[Lemma 4.5]{arXiv:1503.07290v3}) and using the definition of $U$,  we obtain that there exists $q_0>2$ satisfying \eqref{161127@eq1}.
This completes the proof.
\end{proof}

We only prove the case (b) of Theorem \ref{MRF} because (a) is its special case.
We recall the notation \eqref{161125@dd1}.
Assume that $\omega(R_1)\le \gamma$,
where $\gamma\in (0,1)$ is a constant to be chosen later.
Let $(u, p)\in W^1_2(B_R^+(x^0))^d\times L_2(B_R^+(x^0))$ satisfy \eqref{161025@eq1} with $x^0\in \partial \bR^d_+$ and $R\in (0, R_1]$.
Denote $\Gamma_R(x^0)=B_R(x^0)\cap \bR^d_+$.

Let $y=(0,y')\in \Gamma_R(x^0)$, $r>0$, and $B_{r}^+(y)\subseteq B_R^+(x^0)$.
Then by using Lemmas \ref{161125@lem1} and \ref{161127@lem1}, and following the same argument used in deriving \eqref{161015@eq2a}, we have 
\begin{equation}		\label{161125@dd2}
\|Du\|_{L_2(B_\rho^+(y))}\lesssim \left(\left(\frac{\rho}{r}\right)^{d/2}+\gamma\right)\|Du\|_{L_2(B_{r}^+(y))}
\end{equation}
for any $0<\rho\le 2r$.
Similar to \eqref{161015@eq2a}, we also have 
\begin{equation}		\label{161128@eq5}
\|Du\|_{L_2(B_\rho(y))}\lesssim \left(\left(\frac{\rho}{r}\right)^{d/2}+\gamma\right)\|Du\|_{L_2(B_{r}(y))}
\end{equation}
for any $B_{r}(y)\subset B_R^+(x^0)$ and $0<\rho\le r$.

Now we extend $u$ to $B_R(x^0)$ by setting $u\equiv 0$ on $B_R(x^0)\setminus B_R^+(x^0)$.
Then by \eqref{161125@dd2} and \eqref{161128@eq5}, one can easily obtain that 
$$
\|Du\|_{L_2(B_\rho(y))}\lesssim \left(\left(\frac{\rho}{r}\right)^{d/2}+\gamma\right)\|Du\|_{L_2(B_{r}(y))}
$$
for any $B_r(y)\subset B_R(x^0)$ and $0<\rho< r$.
Exactly the same steps as in the proof of Theorem \ref{MRD} yield the estimate \eqref{161019@aa1}.
This completes the proof of Theorem \ref{MRF}.

%=========================================================
\section{Proof of Theorem \ref{MRG}}
\label{S10}	
%=========================================================

We mainly follow the proof in Kang--Kim \cite[Theorem 3.13]{MR2718661}.
For $x\in \bR^d_+$ and $R\le R_2$, we denote $\Omega_R(x)= \bR^d_+\cap B_R(x)$.

\begin{enumerate}[\bf{Step} 1)]
\item
Assume that  $(u,p)\in W^1_2(\Omega_R(x^0))^d\times L_2(\Omega_R(x^0))$ satisfies \eqref{170311@eq1}, where $x^0\in \bR^d_+$ and $0<R\le R_2$ satisfying $d_{x^0}<R/4$.
Using Assumption \ref{ASSC} and the Poincar\'e inequality, we have 
\begin{align*}
\big[u\chi_{\Omega_{R}(x^0)}\big]_{C^{\alpha_2}(B_{R/2}(x^0))}&\lesssim R^{-\alpha_2}\left(\fint_{\Omega_{R}(x^0)}|u|^2\,dx \right)^{1/2}\\
&\lesssim R^{1-d/2-\alpha_2}\left(\int_{\Omega_R(x^0)}|Du|^2\,dx\right)^{1/2}.
\end{align*}
Let $z^0\in \partial B_{2d_{x^0}}(x^0)\setminus \bR^d_+$ and observe that $|z^0-x^0|<R/2$.
From the above inequality and the fact that 
\[
|u(x^0)|=\Big|u(x^0)-u\chi_{\Omega_{R}(x^0)}(z^0)\Big|\lesssim \big[u\chi_{\Omega_{R}(x^0)}\big]_{C^{\alpha_2}(B_{R/2}(x^0))}(d_{x^0})^{\alpha_2},
\]
we have 
\begin{equation}		\label{170309@eq1a}
|u(x^0)|\lesssim(d_{x^0})^{\alpha_2}R^{1-d/2-\alpha_2}\|Du\|_{L_2(\Omega_R(x^0))}
\end{equation}
for $x^0\in \bR^d_+$ and $0<R\le R_2$ satisfying $d_{x^0}<R/4$.

\item
In this step, we first claim that 
\begin{equation}		\label{170310@eq2}
|V(x,y)|\lesssim \min\{d_x,|x-y|\}^{\alpha_2}|x-y|^{2-d-\alpha_2}
\end{equation}
for any  $x,\, y\in \bR^d_+$ satisfying $0<|x-y|<R_2$.
Due to \eqref{161019@eq5}, it suffices to show that 
\begin{equation}		\label{170311@eq3}
|V(x,y)|\lesssim (d_x)^{\alpha_2}|x-y|^{2-d-\alpha_2} \quad \text{if }\, 4d_x<R:=\frac{|x-y|}{2}.
\end{equation}
By \eqref{170309@eq1a}, we have 
\[
|V(x,y)|\lesssim (d_x)^{\alpha_2}R^{1-d/2-\alpha_2}\|DV(\cdot,y)\|_{L_2(\Omega_{R}(x))}.
\]
Using this together with the estimate $iii)$ in Theorem \ref{MRE},
we have 
$$
|V(x,y)|\lesssim (d_x)^{\alpha_2}R^{1-d/2-\alpha_2}\|DV(\cdot,y)\|_{L_2(\bR^d_+\setminus B_R(y))}\lesssim (d_x)^{\alpha_2}R^{2-d-\alpha_2},
$$
which gives the estimate \eqref{170311@eq3}.

Next, we claim that 
\begin{equation}		\label{170310@eq5}
|V(x,y)|\lesssim \min\{d_x, |x-y|\}^{\alpha_2}\min\{d_y,|x-y|\}^{\alpha_2}|x-y|^{2-d-2\alpha_2}
\end{equation}
for any  $x,\, y\in \bR^d_+$ satisfying $0<|x-y|<R_2/2$.
We may assume that $4d_y<R:=|x-y|/4$ to prove \eqref{170310@eq5} because otherwise would follow from \eqref{170310@eq2}.
Using  Corollary \ref{161122@cor1}, \eqref{170309@eq1a}, and Caccioppoli's inequality (see, for instance, Lemma \ref{160920@lem2} $(b)$), we have 
\begin{align}	
\nonumber
|V(x,y)|&\lesssim (d_y)^{\alpha_2}R^{1-d/2-\alpha_2}\|D{}^*V(\cdot,x )\|_{L_2(\Omega_R(y))}\\
\label{170310@eq3}
&\lesssim (d_y)^{\alpha_2}R^{-d/2-\alpha_2}\|{}^*V(\cdot,x)\|_{L_2(\Omega_{2R}(y))}.
\end{align}
Since it holds that 
\[
2R<|x-z|<6R \quad \text{for all }\, z\in \Omega_{2R}(y),
\]
we obtain by \eqref{170310@eq2} and \eqref{170310@eq3} that 
\[
|V(x,y)|\lesssim (d_y)^{\alpha_2}\min\{d_x,|x-y|\}^{\alpha_2}R^{2-d-2\alpha_2}.
\]
\item
To prove the estimate \eqref{170304@eq1},  it suffices to show that 
\begin{equation}		\label{170310@eq9}
|V(x,y)|\lesssim \min\{d_x, R_2\}^{\alpha_2}\min\{d_y,R_2\}^{\alpha_2}R_2^{2-d-2\alpha_2}
\end{equation}
for any $x,\,y\in \bR^d_+$ satisfying $|x-y|\ge R_2/2$.
Set $R=R_2/4$.
Note that $(V(\cdot,y),\Pi(\cdot,y))$ satisfies 
$$
\left\{
\begin{aligned}
&\cL u+\nabla p =0, \quad \divg u=0 \quad \text{in }\, \Omega_{R}(x),\\
&u=0 \quad \text{on }\, \partial \bR^d_+.
\end{aligned}
\right.
$$
From Lemma \ref{161121@lem7} and $i)$ in Theorem \ref{MRE}, it follows that 
\begin{align*}
|V(x,y)|&\lesssim R^{-d}\|V(\cdot,y)\|_{L_1(\Omega_{R}(x))}\\
&\lesssim R^{(2-d)/2}\|V(\cdot,y)\|_{L^{2d/(d-2)}(\bR^d_+\setminus B_{R}(y))}\lesssim R_2^{2-d}.
\end{align*}
By utilizing the above inequality, and following  the same steps used in deriving \eqref{170310@eq5}, we concluded the estimate \eqref{170310@eq9}.
\end{enumerate}
This completes the proof of Theorem \ref{MRG}.

%=========================================================
\section{Green functions on unbounded domains}
\label{S11}	
%=========================================================
In this section we consider the existence of the Green function for the Stokes system on a domain $\Omega$ with $|\Omega|=\infty$. 
We impose the following assumption on $\Omega$ in Theorem \ref{MRH} below.

\begin{assumption}		\label{ASSD}
There exists a constant $C_3>0$ such that the following holds:
for any $g\in L_2(\Omega)$, there exists $u\in \mathring{Y}^1_2(\Omega)^d$ satisfying
\[
\divg u=g \quad \text{in }\, \Omega, \quad \|Du\|_{L_2(\Omega)}\le C_3 \|g\|_{L_2(\Omega)}.
\]
\end{assumption}

\begin{remark}		
Below are some examples of cases when Assumption \ref{ASSD} holds.
\begin{enumerate}[$(i)$]
\item
$\Omega$ is the whole space or half space.
More generally, 
\[
\Omega=\{x\in \bR^d:x_1>0, \, x_2>0,\, \text{ or }\, x_d>0\}.
\]
\item
$\Omega$ is a locally Lipschitz and exterior domain (see \cite[Theorem III.3.6]{MR2808162}).
\end{enumerate}
\end{remark}

\begin{remark} 			\label{170314@rmk1}
Note that if  $\Omega$ is a domain in $\bR^d$, $d\ge 3$, with $|\Omega|=\infty$,
then under Assumption \ref{ASSD}, we obtain the $L_2$-solvability of the Stokes systems (with measurable coefficients) and the estimate \eqref{160721@eq2}. 
\end{remark}

Under Assumptions \ref{ASSA} and \ref{ASSD}, using Remark \ref{170314@rmk1} and  repeating the same arguments in the proof of Theorem \ref{MRB}, one can  prove the existence of the Green function on $\Omega$.
We think it is worth to present the precise statement.
We denote $d_x=\operatorname{dist}(x,\partial \Omega)$ for $x\in \Omega$.

\begin{theorem}		\label{MRH}
Let $\Omega$ be a domain in $\bR^d$, $d\ge 3$, with $|\Omega|=\infty$.
If Assumptions \ref{ASSA} and \ref{ASSD} hold, then there exists a unique Green function $((V(x,y),\Pi(x,y))$ for the Stokes operator on $\Omega$.
Moreover, for any $x,\, y\in \Omega$ satisfying $0<|x-y|\le \min\{d_x,d_y,R_0\}$, we have 
\[
|V(x,y)|\lesssim_{d,\lambda,C_0,\alpha_0,C_3}|x-y|^{2-d}.
\]
Furthermore, the Green function satisfies the representation formulas \eqref{170307@eq1} and \eqref{161122@eq8}, and it also satisfies the estimates $i)$ -- $v)$ in Corollary \ref{MRC}.
\end{theorem}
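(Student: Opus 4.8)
\textbf{Proof sketch of Theorem \ref{MRH}.}
The plan is to transcribe, almost verbatim, the construction of Theorem \ref{MRB}, with Assumption \ref{ASSD} supplying the two ingredients that were special to $\bR^d_+$ in that argument: the $L_2$-solvability of the Stokes system together with the bound \eqref{160721@eq2} (this is Remark \ref{170314@rmk1}), and the solvability of the divergence equation on all of $\Omega$. Every other tool used in Sections \ref{S5}--\ref{S6} is \emph{interior} (the pressure estimate Lemma \ref{160808@lem2}, the Caccioppoli inequality Lemma \ref{160920@lem2}$(a)$, the local boundedness Lemma \ref{161121@lem7}$(a)$, and the reverse H\"older inequality entering through Assumption \ref{ASSA}), so each applies on any ball or annulus contained in $\Omega$; the boundary estimates (Lemma \ref{160920@lem2}$(b)$ and the $\BMO$/boundary analogues) are neither available nor needed, precisely because the pointwise bound is asserted only in the interior region $|x-y|\le\min\{d_x,d_y,R_0\}$.

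First I would fix $\varepsilon>0$, $y\in\Omega$, $k\in\{1,\dots,d\}$, set $f_{\varepsilon;y,k}=\chi_{\Omega\cap B_\varepsilon(y)}/|\Omega\cap B_\varepsilon(y)|\,e_k$, and define the averaged Green function $(V^{\cdot k}_\varepsilon(\cdot,y),\Pi^k_\varepsilon(\cdot,y))\in\rY^1_2(\Omega)^d\times L_2(\Omega)$ as the unique weak solution of $\cL u+\nabla p=f_{\varepsilon;y,k}$, $\divg u=0$ in $\Omega$ (this exists by Remark \ref{170314@rmk1}); from \eqref{160721@eq2} it obeys $\|DV_\varepsilon(\cdot,y)\|_{L_2(\Omega)}+\|\Pi_\varepsilon(\cdot,y)\|_{L_2(\Omega)}\lesssim\varepsilon^{1-d/2}$. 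Next I would reproduce Lemma \ref{L51}: for $0<\varepsilon\le|x-y|/3\le\frac{1}{2}\min\{d_x,d_y,R_0\}$ the relevant balls about $x$ and $y$ lie in $\Omega$, so Lemma \ref{161121@lem7}$(a)$ and the duality argument there yield $|V_\varepsilon(x,y)|\lesssim|x-y|^{2-d}$ with no change. Then I would reproduce the uniform estimates of Lemma \ref{160920@lem1}: the bound $\|V_\varepsilon(\cdot,y)\|_{Y^1_2(\Omega\setminus B_R(y))}\lesssim R^{1-d/2}$ comes from the cut-off test-function computation of Lemma \ref{L52}, using Lemmas \ref{160808@lem2} and \ref{160920@lem2}$(a)$ on an annulus about $y$ and the pointwise bound just obtained; the bound $\|\Pi_\varepsilon(\cdot,y)\|_{L_2(\Omega\setminus B_R(y))}\lesssim R^{1-d/2}$ is exactly where Assumption \ref{ASSD} re-enters, producing $\varphi\in\rY^1_2(\Omega)^d$ with $\divg\varphi=\Pi^k_\varepsilon(\cdot,y)\chi_{\Omega\setminus\overline{B_R(y)}}$ and $\|\varphi\|_{Y^1_2(\Omega)}\lesssim\|\Pi^k_\varepsilon(\cdot,y)\|_{L_2(\Omega\setminus\overline{B_R(y)})}$, after which the proof of Lemma \ref{L54} is copied; and the interior $L_q$-bounds for $V_\varepsilon$, $DV_\varepsilon$, $\Pi_\varepsilon$ follow from the weak-type argument of Lemma \ref{L53}. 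One technical wrinkle: the auxiliary annulus $B_{5R/4}(y)\setminus\overline{B_{R/4}(y)}$ arising in these computations must sit inside $\Omega$, so I would first prove the uniform estimates for $R\le c\min\{d_y,R_0\}$ with a fixed small $c$ and then extend them to $R\le\min\{d_y,R_0\}$ using the trivial monotonicity of $\Omega\setminus B_R(y)$ in $R$, just as in the passage from $R_0'/2$ to $R_0'$ in Lemma \ref{160920@lem1}.

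With these uniform estimates, weak compactness and a diagonalization give a null sequence $\varepsilon_\rho\to0$ and a limit $(V(\cdot,y),\Pi(\cdot,y))$ --- with $V_{\varepsilon_\rho}(\cdot,y)\rightharpoonup V(\cdot,y)$ weakly in $Y^1_2(\Omega\setminus\overline{B_r(y)})$ for every $r>0$ and weakly in $W^1_q$ near $y$, and $\Pi_{\varepsilon_\rho}(\cdot,y)\to\Pi(\cdot,y)$ strongly in $L_2(\Omega\setminus\overline{B_r(y)})$ and weakly in $L_q$ near $y$ --- glued as in Step 6) of Section \ref{S5}; one then verifies conditions $(a)$--$(c)$ of Definition \ref{D23} as in Steps 7)--8) there (the divergence-free property and the weak formulation \eqref{160821@eq5} pass to the limit, and \eqref{170307@eq1} follows by testing the equation for $(u,p)$ against $V^{\cdot k}_{\varepsilon_\rho}(\cdot,y)$ and invoking the uniform $L_q$-bounds), obtains the pointwise estimate \eqref{160822_eq1} from Lemma \ref{161121@lem7}$(a)$ on $B_{|x-y|/2}(x)\subset\Omega$ together with estimate $i)$ of Corollary \ref{MRC}, and deduces the estimates $i)$--$v)$ of Corollary \ref{MRC} by lower semicontinuity of the relevant norms. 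Uniqueness is immediate from condition $(c)$ and the $L_2$-solvability. Finally, constructing the Green function ${}^*V$ for $\cL^*$ in the same way and establishing the equicontinuity statement analogous to Lemma \ref{L55} --- an Arzel\`a--Ascoli argument fed by the interior H\"older bound of Assumption \ref{ASSA}, which is uniform over any compact $K\subset\Omega\setminus\{y\}$ since such a $K$ has positive distance to both $\partial\Omega$ and $y$ --- the double-limit computation around \eqref{160906@eq1} yields the symmetry identity $V(x,y)={}^*V(y,x)^{\operatorname{tr}}$ (that is, \eqref{161120@eq1}), whence the representation formula \eqref{161122@eq8} follows from the adjoint analogue of \eqref{170307@eq1}.

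The hard part is not any single estimate but the bookkeeping: checking that every invocation of a ``ball/half-ball'' lemma from Sections \ref{S4}--\ref{S6} can be localized either to an interior ball $B_R(y)$ with $R\le d_y$ (covered by Lemma \ref{161121@lem3}$(b)$ after scaling) or to the global problem on $\Omega$ (covered by Assumption \ref{ASSD}), and --- in the uniform estimates --- keeping the auxiliary annuli inside $\Omega$, which is what forces the ``prove for small $R$, then extend by monotonicity'' step. Granting this, no genuinely new estimate is needed and the theorem follows as a faithful adaptation of Theorem \ref{MRB}.
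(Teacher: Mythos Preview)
Your proposal is correct and follows exactly the approach the paper takes: the paper itself does not give a separate proof of Theorem~\ref{MRH} but simply states (just before the theorem) that ``using Remark~\ref{170314@rmk1} and repeating the same arguments in the proof of Theorem~\ref{MRB}, one can prove the existence of the Green function on $\Omega$.'' Your sketch is in fact more detailed than the paper's own treatment, correctly identifying where Assumption~\ref{ASSD} enters (the global $L_2$-solvability and the construction of $\varphi$ in the pressure estimate) and flagging the annulus-localization bookkeeping.
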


By modifying the proof of \eqref{161019@eq5}, one can prove the following pointwise bound.

\begin{theorem}		\label{MRI}
Let $\Omega$ be a domain in $\bR^d$, $d\ge 3$, with $|\Omega|=\infty$.
Suppose that Assumptions \ref{ASSA} and \ref{ASSD} hold.
Let $(V(x,y),\Pi(x,y))$ be the Green function constructed in Theorem \ref{MRH}.
If Assumption \ref{ASSB} holds with $\Omega$  in place of $\bR^d_+$, respectively, then for any $x,\, y\in \Omega$ satisfying $0<|x-y|\le \min\{R_0,R_1\}$, we have 
\[
|V(x,y)|\lesssim_{d,\lambda,C_0,\alpha_0,C_3,C_1}|x-y|^{2-d}.
\]
\end{theorem}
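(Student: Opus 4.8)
The plan is to transcribe, almost verbatim, the proof of the pointwise bound \eqref{161019@eq5} in Theorem \ref{MRE}, replacing $\bR^d_+$ throughout by $\Omega$. Two inputs made that argument work: the interior/boundary $L_\infty$--$L_1$ estimate of Lemma \ref{161121@lem7}, and the $L_2$-solvability estimate \eqref{160721@eq2} for the (adjoint) Stokes system. The second is available on $\Omega$ by Remark \ref{170314@rmk1}, which rests on Assumption \ref{ASSD}; the first holds with the same proof once one notes that nothing in it used the flatness of $\partial \bR^d_+$ --- only the triangle inequality and the existence of a (not necessarily unique) nearest point in $\partial\Omega$ to a given point of $\overline{\Omega}$.

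First I would record the $\Omega$-analogue of Lemma \ref{161121@lem7}(b): if $(u,p)\in W^1_2(\Omega\cap B_R(x^0))^d\times L_2(\Omega\cap B_R(x^0))$ solves the homogeneous Stokes system (or its adjoint) in $\Omega\cap B_R(x^0)$ with $u=0$ on $\partial\Omega\cap B_R(x^0)$ for some $x^0\in\partial\Omega$ and $0<R\le\min\{R_0,R_1\}$, then
\[
\|u\|_{L_\infty(\Omega\cap B_{R/2}(x^0))}\lesssim_{d,C_0,\alpha_0,C_1} R^{-d}\|u\|_{L_1(\Omega\cap B_R(x^0))}.
\]
The proof copies that of Lemma \ref{161121@lem7}(b): for $0<r<R$ set $\rho=(R-r)/8$, choose $y^0\in\Omega\cap B_r(x^0)$ with $\tfrac12\sup_{\Omega\cap B_r(x^0)}|u|^2\le\sup_{\Omega\cap B_\rho(y^0)}|u|^2$, and according to whether $2\rho\le\operatorname{dist}(y^0,\partial\Omega)$ or $2\rho>\operatorname{dist}(y^0,\partial\Omega)$ apply Assumption \ref{ASSA} on $B_{2\rho}(y^0)\subset\Omega$ or Assumption \ref{ASSB} (with $\Omega$ in place of $\bR^d_+$) on $\Omega\cap B_{8\rho}(z^0)$ for $z^0\in\partial\Omega$ a nearest point to $y^0$; then run the iteration of \cite[pp.~80--82]{MR1239172}. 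Combining this with Lemma \ref{161121@lem7}(a) via the dichotomy $d_x>|x-y|/8$ versus $d_x\le|x-y|/8$ (in the latter case projecting $x$ to $x^0\in\partial\Omega$ and using $\Omega\cap B_{3R/8}(x^0)\subset\Omega\cap B_{R/2}(x)$ with $R=|x-y|$) yields, exactly as in Step 1 of Theorem \ref{MRE},
\[
|V(x,y)|\lesssim_{d,C_0,\alpha_0,C_1} R^{-d}\|V(\cdot,y)\|_{L_1(\Omega\cap B_{R/2}(x))},\qquad R=|x-y|\le\min\{R_0,R_1\}.
\]

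Next I would carry out the duality argument of Step 2 of Theorem \ref{MRE}. Fix $x,y\in\Omega$ with $R=|x-y|\le\min\{R_0,R_1\}$, let $f\in L_\infty(\Omega)^d$ with $\operatorname{supp}f\subset\Omega\cap B_{R/2}(x)$, and let $(u,p)\in\rY^1_2(\Omega)^d\times L_2(\Omega)$ solve $\cL^*u+\nabla p=f$, $\divg u=0$; this exists and obeys \eqref{160721@eq2} by Remark \ref{170314@rmk1}. Since $f\in L_{2d/(d+2)}(\Omega)^d\cap L_{q_0/2,\operatorname{loc}}(\Omega)^d$ for any $q_0>d$, the representation formula of Theorem \ref{MRH} gives $u(y)=\int_{\Omega\cap B_{R/2}(x)}V(z,y)^{\operatorname{tr}}f(z)\,dz$. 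As $B_{R/2}(y)$ and $B_{R/2}(x)$ are disjoint, $(u,p)$ solves the homogeneous adjoint system in $\Omega\cap B_{R/2}(y)$ with $u=0$ on $\partial\Omega$, so the $L_\infty$--$L_1$ estimate above, Hölder's inequality, the Sobolev inequality on $\rY^1_2(\Omega)$, and \eqref{160721@eq2} give $\|u\|_{L_\infty(\Omega\cap B_{R/16}(y))}\lesssim R^{1-d/2}\|Du\|_{L_2(\Omega)}\lesssim R^2\|f\|_{L_\infty}$. Taking the supremum over such $f$ (to extract the entries of $V$) yields $\|V(\cdot,y)\|_{L_1(\Omega\cap B_{R/2}(x))}\lesssim R^2$, and feeding this into the displayed bound of the previous paragraph gives $|V(x,y)|\lesssim R^{2-d}$, which is the assertion.

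I expect the only real work to be the bookkeeping: checking that every geometric step of the proof of Theorem \ref{MRE} survives on a general $\Omega$ --- the ball inclusions, the nearest-boundary-point selections, and the admissible ranges of $R$ in Assumptions \ref{ASSA} and \ref{ASSB} --- together with confirming that the proof of the $\Omega$-analogue of Lemma \ref{161121@lem7}(b) genuinely needs nothing beyond Assumptions \ref{ASSA} and \ref{ASSB}. Unlike the boundary Hölder estimate used in Theorem \ref{MRG}, no new analytic ingredient is required, and the implied constant depends only on $d,\lambda,C_0,\alpha_0,C_3,C_1$, as stated.
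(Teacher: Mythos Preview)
Your proposal is correct and is exactly the approach the paper takes: the paper's entire proof of Theorem \ref{MRI} is the single sentence ``By modifying the proof of \eqref{161019@eq5}, one can prove the following pointwise bound,'' and your write-up carries out precisely that modification. One minor bookkeeping point worth flagging: in the proof of Lemma \ref{161121@lem7}(b) the choice $z^0=(0,y_2^0,\ldots,y_d^0)$ uses flatness to get $|z^0-x^0|\le|y^0-x^0|$, whereas for general $\Omega$ one only has $|z^0-x^0|\le d_{y^0}+|y^0-x^0|<2\rho+r$, so the fraction in $\rho=(R-r)/8$ needs to be shrunk (e.g.\ to $(R-r)/10$) to keep $\Omega\cap B_{8\rho}(z^0)\subset\Omega\cap B_R(x^0)$ --- exactly the kind of harmless adjustment you anticipated.
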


We note that Caccioppoli's inequality holds for the Stokes system on a Lipschitz domain.
Then by following the proof of  Theorem \ref{MRG}, we obtain the following estimate.

\begin{theorem}
Let $\Omega$ be a  domain in $\bR^d$, $d\ge 3$, with $|\Omega|=\infty$.
Suppose that $\Omega$ has a Lipschitz boundary with a bounded Lipschitz constant.
If  Assumption \ref{ASSD} holds, and if Assumption  \ref{ASSC} holds with $\Omega$ in place of $\bR^d_+$, then for any $x,\,y\in \Omega$ with $x\neq y$, 
$$
|V(x,y)|\le C\min\{d_x,|x-y|,R_2\}^{\alpha_2}\min\{d_y,|x-y|,R_2\}^{\alpha_2} \min\{|x-y|,R_2\}^{2-d-2\alpha_2},
$$
where $C=C(d,\lambda,C_2,\alpha_2, C_3,\Omega)$.
\end{theorem}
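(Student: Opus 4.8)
The plan is to run the three-step argument of the proof of Theorem~\ref{MRG} essentially verbatim, replacing the half-space ingredients by their counterparts on the Lipschitz domain $\Omega$. For the setup, note that Assumption~\ref{ASSC} with $\Omega$ in place of $\bR^d_+$ implies Assumptions~\ref{ASSA} and~\ref{ASSB} with $\Omega$ in place of $\bR^d_+$ (on scales $\le R_2$), and Assumption~\ref{ASSD} holds; hence Theorems~\ref{MRH} and~\ref{MRI} provide the Green function $(V,\Pi)$ on $\Omega$ together with the pointwise bound $|V(x,y)|\lesssim|x-y|^{2-d}$ for $0<|x-y|\le R_2$. Moreover, running the proof of Theorem~\ref{MRE} with the boundary Caccioppoli estimate (Lemma~\ref{160920@lem2}$(b)$) replaced by Caccioppoli's inequality on $\Omega$ yields the decay estimates $i)$--$v)$ of Theorem~\ref{MRE} with $\bR^d_+$ replaced by $\Omega$; I take these for granted.

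\emph{Step 1 (boundary pointwise bound).} For $x^0\in\Omega$ and $0<R\le R_2$ with $d_{x^0}<R/4$, let $(u,p)$ solve the homogeneous Stokes system in $\Omega_R(x^0):=\Omega\cap B_R(x^0)$ with $u=0$ on $\partial\Omega\cap B_R(x^0)$, and extend $u$ by zero outside $\Omega_R(x^0)$. Assumption~\ref{ASSC} (on $\Omega$) together with the Poincar\'e inequality on $\Omega_R(x^0)$ for functions vanishing on a boundary portion --- valid with a constant depending only on $d$ and the bounded Lipschitz character of $\Omega$ --- gives
\[
\big[u\chi_{\Omega_R(x^0)}\big]_{C^{\alpha_2}(B_{R/2}(x^0))}\lesssim R^{-\alpha_2}\Big(\fint_{\Omega_R(x^0)}|u|^2\,dx\Big)^{1/2}\lesssim R^{1-d/2-\alpha_2}\|Du\|_{L_2(\Omega_R(x^0))}.
\]
Choosing $z^0\in B_{R/2}(x^0)\setminus\overline{\Omega}$ with $|z^0-x^0|\lesssim d_{x^0}$ (possible since $\Omega$ is Lipschitz) and using $u\chi_{\Omega_R(x^0)}(z^0)=0$ yields
\[
|u(x^0)|\lesssim (d_{x^0})^{\alpha_2}R^{1-d/2-\alpha_2}\|Du\|_{L_2(\Omega_R(x^0))},
\]
the analogue of \eqref{170309@eq1a}.

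\emph{Step 2 (intermediate bounds) and Step 3 (far range).} As for Theorem~\ref{MRG}, I first prove
\[
|V(x,y)|\lesssim\min\{d_x,|x-y|\}^{\alpha_2}|x-y|^{2-d-\alpha_2},\qquad 0<|x-y|<R_2,
\]
by applying Step~1 to $V^{\cdot k}(\cdot,y)$ at $x^0=x$ with $R:=|x-y|/2$ when $4d_x<R$ (here $(V^{\cdot k}(\cdot,y),\Pi^k(\cdot,y))$ solves the homogeneous Stokes system in $\Omega_R(x)$ and vanishes on $\partial\Omega$), combining with $\Omega_R(x)\subset\Omega\setminus B_R(y)$ and the $\Omega$-analogue of estimate $iii)$ of Theorem~\ref{MRE}; the case $d_x\ge|x-y|/8$ reduces to the pointwise bound. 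Then, using the symmetry $V(x,y)={}^*V(y,x)^{\operatorname{tr}}$ (Corollary~\ref{161122@cor1}, which holds on $\Omega$ by Theorem~\ref{MRH}), Step~1 applied to ${}^*V^{\cdot k}(\cdot,x)$ at $y$ with $R:=|x-y|/4$ when $4d_y<R$, Caccioppoli's inequality on $\Omega$ to pass from $\|D{}^*V(\cdot,x)\|_{L_2(\Omega_R(y))}$ to $R^{-1}\|{}^*V(\cdot,x)\|_{L_2(\Omega_{2R}(y))}$, and the bound just proved applied to $|{}^*V(z,x)|=|V(x,z)|$ for $z\in\Omega_{2R}(y)$ (where $|x-z|\sim|x-y|$), I obtain the estimate \eqref{170310@eq5} with $\bR^d_+$ replaced by $\Omega$, i.e.\ the asserted bound for $0<|x-y|<R_2/2$. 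For $|x-y|\ge R_2/2$, set $R:=R_2/4$; the $\Omega$-version of Lemma~\ref{161121@lem7} and estimate $i)$ of the $\Omega$-version of Theorem~\ref{MRE} give the crude bound $|V(x,y)|\lesssim R^{-d}\|V(\cdot,y)\|_{L_1(\Omega_R(x))}\lesssim R^{(2-d)/2}\|V(\cdot,y)\|_{L_{2d/(d-2)}(\Omega\setminus B_R(y))}\lesssim R_2^{2-d}$, and rerunning the Step~1--Step~2 argument at the fixed scale $R_2$ upgrades this to $|V(x,y)|\lesssim\min\{d_x,R_2\}^{\alpha_2}\min\{d_y,R_2\}^{\alpha_2}R_2^{2-d-2\alpha_2}$. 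Combining the two ranges gives \eqref{170304@eq1}, i.e.\ the claimed estimate with the threefold minima; the dependence of $C$ on $\Omega$ enters only through the Poincar\'e and Caccioppoli constants and the implied constants in Theorems~\ref{MRH}--\ref{MRI}.

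The main obstacle lies not in any new idea but in checking that the two half-space--specific tools of Theorem~\ref{MRG}'s proof survive on a general unbounded Lipschitz domain with constants depending only on $d$ and the bounded Lipschitz character: Caccioppoli's inequality for the Stokes system (already noted to hold on Lipschitz domains in the paragraph preceding this theorem, so it replaces Lemma~\ref{160920@lem2}$(b)$), and the extension-by-zero plus Poincar\'e argument that produces the exterior reference point $z^0$ at distance $\lesssim d_{x^0}$. Both are elementary once the Lipschitz condition is in force, so the remainder of the argument goes through unchanged.
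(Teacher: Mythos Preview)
Your proposal is correct and follows essentially the same approach as the paper, which simply remarks that Caccioppoli's inequality holds for the Stokes system on a Lipschitz domain and then refers to the proof of Theorem~\ref{MRG}. You have supplied the details the paper omits---in particular the Poincar\'e argument with an exterior reference point $z^0$ and the role of the bounded Lipschitz constant---but the structure (Steps~1--3) and the ingredients (Assumption~\ref{ASSC}, the estimates of Theorem~\ref{MRE} transferred to $\Omega$, the symmetry $V(x,y)={}^*V(y,x)^{\operatorname{tr}}$, and boundary Caccioppoli) are the same.
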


\section*{Acknowledgment}
The authors would like to express their sincerely gratitude to the referee for careful reading and for many helpful comments and suggestions.
We also thank Tongkeun Chang for valuable comments.
J. Choi was supported by Basic Science Research Program through the National Research Foundation of Korea (NRF) funded by the Ministry of Education (2014R1A1A2054865).
M. Yang has been supported by the National Research Foundation of Korea (NRF) grant funded by the Korea government(MSIP) (No. 2016R1C1B2015731).

%\section*{References}


\begin{thebibliography}{10}

\bibitem{MR2663713}
Hammadi Abidi, Guilong Gui, and Ping Zhang.
\newblock On the decay and stability of global solutions to the 3{D}
  inhomogeneous {N}avier-{S}tokes equations.
\newblock {\em Comm. Pure Appl. Math.}, 64(6):832--881, 2011.

\bibitem{MR2101215}
Pascal Auscher, Emmanuel Russ, and Philippe Tchamitchian.
\newblock Hardy {S}obolev spaces on strongly {L}ipschitz domains of {$\mathbb{
  R}^n$}.
\newblock {\em J. Funct. Anal.}, 218(1):54--109, 2005.


\bibitem{Cattabriga}
Lamberto Cattabriga.
\newblock Su un Problema al Contorno Relativo al Sistema di Equa- zioni di Stokes.
\newblock {\em Rend. Sem. Mat. Padova}, 31:308-340, 1961.


\bibitem{MR2465713}
TongKeun Chang and Hi~Jun Choe.
\newblock Estimates of the {G}reen's functions for the elasto-static equations
  and {S}tokes equations in a three dimensional {L}ipschitz domain.
\newblock {\em Potential Anal.}, 30(1):85--99, 2009.

\bibitem{MR1683625}
Hi~Jun Choe.
\newblock On the fundamental solutions to {S}tokes equations.
\newblock {\em J. Differential Equations}, 153(2):313--337, 1999.


\bibitem{MR3261109}
Jongkeun Choi and Seick Kim.
\newblock Green's functions for elliptic and parabolic systems with Robin-type boundary conditions.
\newblock {\em J. Funct. Anal.}, 267(9):3205--3261, 2014.


\bibitem{MR3105752}
Jongkeun Choi and Seick Kim.
\newblock Neumann functions for second order elliptic systems with measurable
  coefficients.
\newblock {\em Trans. Amer. Math. Soc.}, 365(12):6283--6307, 2013.

\bibitem{arXiv:1503.07290v3}
Jongkeun Choi and Ki-Ahm Lee.
\newblock The {G}reen function for the stokes system with measurable
  coefficients.
\newblock {\em Comm. Pure Appl. Anal., to appear}.


\bibitem{arXiv:1604.02690v2}
Hongjie Dong and Doyoon Kim.
\newblock {$L_q$}-estimates for stationary stokes system with coefficients
  measurable in one direction.
\newblock {\em arXiv:1604.02690v2}.

\bibitem{arXiv:1702.07045v1}
Hongjie Dong and Doyoon Kim.
\newblock Weighted $L_q$-estimates for stationary Stokes system with partially BMO coefficients.
\newblock {\em arXiv:170207045v1}.

\bibitem{MR2808162}
{G}iovanni~{P}aolo Galdi.
\newblock {\em An introduction to the mathematical theory of the
  {N}avier-{S}tokes equations}.
\newblock Springer Monographs in Mathematics. Springer, New York, second
  edition, 2011.

\bibitem{MR1239172}
Mariano Giaquinta.
\newblock {\em Introduction to regularity theory for nonlinear elliptic
  systems}.
\newblock Lectures in Mathematics ETH Z\"urich. Birkh\"auser Verlag, Basel,
  1993.

\bibitem{MR641818}
Mariano Giaquinta and Giuseppe Modica.
\newblock Nonlinear systems of the type of the stationary {N}avier-{S}tokes
  system.
\newblock {\em J. Reine Angew. Math.}, 330:173--214, 1982.

\bibitem{MR1814364}
David Gilbarg and Neil~S. Trudinger.
\newblock {\em Elliptic partial differential equations of second order}.
\newblock Classics in Mathematics. Springer-Verlag, Berlin, {R}eprint of the
  1998 edition, 2001.

\bibitem{MR2341783}
Steve Hofmann and Seick Kim.
\newblock The {G}reen function estimates for strongly elliptic systems of
  second order.
\newblock {\em Manuscripta math.}, 124(2):139--172, 2007.

\bibitem{MR2027755}
Kyungkeun Kang.
\newblock On regularity of stationary Stokes and Navier-Stokes equations near boundary.
\newblock {\em J. Math. Fluid Mech.}, 6(1):78--101, 2004.


\bibitem{MR2718661}
Kyungkeun Kang and Seick Kim.
\newblock Global pointwise estimates for {G}reen's matrix of second order
  elliptic systems.
\newblock {\em J. Differential Equations}, 249(11):2643--2662, 2010.

\bibitem{MR0425391}
{O}l'ga~Aleksandrovna Lady{\v{z}}enskaja and {V}sevolod~{A}lekseevich
  Solonnikov.
\newblock The unique solvability of an initial-boundary value problem for
  viscous incompressible inhomogeneous fluids.
\newblock {\em Zap. Nau\v cn. Sem. Leningrad. Otdel. Mat. Inst. Steklov.
  (LOMI)}, 52:52--109, 218--219, 1975.


\bibitem{MR1461542}
Jan Mal{\'y} and William~P. Ziemer.
\newblock {\em Fine regularity of solutions of elliptic partial differential
  equations}, volume~51 of {\em Mathematical Surveys and Monographs}.
\newblock American Mathematical Society, Providence, RI, 1997.

\bibitem{MR2182091}
Vladimir~Gilelevich Maz'ya and J\"urgen Rossmann.
\newblock Pointwise estimates for {G}reen's kernel of a mixed boundary value
  problem to the {S}tokes system in a polyhedral cone.
\newblock {\em Math. Nachr.}, 278(2005):1766--1810, 2005.

\bibitem{MR725151}
Vladimir~Gilelevich Maz'ya and Boris Plamenevski{\u\i}.
\newblock The first boundary value problem for classical equations of
  mathematical physics in domains with piecewise-smooth boundaries. {I}.
\newblock {\em Z. Anal. Anwendungen}, 2(4):335--359, 1983.

\bibitem{MR734895}
Vladimir~Gilelevich Maz'ya and Boris Plamenevski{\u\i}.
\newblock The first boundary value problem for classical equations of
  mathematical physics in domains with piecewise smooth boundaries. {II}.
\newblock {\em Z. Anal. Anwendungen}, 2(6):523--551, 1983.

\bibitem{MR2763343}
Dorina Mitrea and Irina Mitrea.
\newblock On the regularity of {G}reen functions in {L}ipschitz domains.
\newblock {\em Comm. Partial Differential Equations}, 36(2):304--327, 2011.

\bibitem{MR3320459}
Katharine~A. Ott, Seick Kim, and Russell~Murray Brown.
\newblock The {G}reen function for the mixed problem for the linear {S}tokes
  system in domains in the plane.
\newblock {\em Math. Nachr.}, 288(4):452--464, 2015.

\bibitem{Solonnikov}
Vsevolod Alekseevich Solonnikov.
\newblock Estimates for Solutions of Nonstationary Navier-Stokes Equations. 
\newblock {\em Zap. Nauch. Sem. Len. Otdel. Mat. Inst. Steklov (LOMI)}, 38:153- 231, 1973.



\bibitem{MR1282728}
Werner Varnhorn.
\newblock {\em The {S}tokes equations}.
\newblock Mathematical Research. Akademie-Verlag, Berlin, 1994.

\end{thebibliography}
\end{document}